\theoremstyle{plain}
\newtheorem{theorem}{Theorem}[section]
\newtheorem{proposition}[theorem]{Proposition}
\newtheorem{lemma}[theorem]{Lemma}
\newtheorem{corollary}[theorem]{Corollary}
\theoremstyle{definition}
\newtheorem{definition}{Definition}[section]
\theoremstyle{remark}
\newtheorem{remark}{Remark}[section]
\numberwithin{equation}{section}
\newcommand{\N}{\mathbb{N}}
\newcommand{\R}{\mathbb{R}}
\def\ocirc#1{\ifmmode\setbox0=\hbox{$#1$}\dimen0=\ht0
    \advance\dimen0 by1pt\rlap{\hbox to\wd0{\hss\raise\dimen0
    \hbox{\hskip.2em$\scriptscriptstyle\circ$}\hss}}#1\else
    {\accent"17 #1}\fi} 
\newcommand{\eps}{\varepsilon}
\newcommand{\F}{\mathcal{F}}
\newcommand{\G}{\mathcal{G}}
\newcommand{\PP}{\mathbb{P}}
\newcommand{\dual}[2]{\langle #1, #2\rangle}
\newcommand{\LL}{\mathscr{L}}
\newcommand{\E}{\mathbb{E}}
\newcommand{\T}{\mathbb{T}}
\DeclareMathOperator{\divv}{div}
\DeclareMathOperator{\Lip}{Lip}
\def\topi{\stackrel{\pi}{\longrightarrow}}
\begin{document}

\title{Diffusion-approximation for a kinetic equation with perturbed velocity redistribution process}
\author{Nils Caillerie\thanks{1, place De Lattre de Tassigny, V\'etraz-Monthoux  BP241, 74106 Annemasse Cedex, France} and Julien Vovelle\thanks{Univ Lyon, CNRS, ENS Lyon, UMR5669, Unit\'e de Math\'ematiques Pures et Appliqu\'ees, F-69364 Lyon, France}}
\maketitle

\begin{abstract} We derive the hydrodynamic limit of a kinetic equation with a stochastic, short range perturbation of the velocity operator. Under some mixing hypotheses on the stochastic perturbation, we establish a diffusion-approximation result: the limit we obtain is a parabolic stochastic partial differential equation on the macroscopic parameter, the density here.
\end{abstract}
{\bf Keywords:} diffusion-approximation, hydrodynamic limit, run-and-tumble\medskip 

{\bf MSC Number:} 35R60  (35Q20 60H15 35B40)

\normalsize

\tableofcontents

\section{Introduction}\label{introduction}
Let $\T^d$ denote the $d$-dimensional torus. Let $V$ be a bounded domain of $\R^d$, say $V\subset \bar{B}_{\R^d}(0,1)$, and let $\nu$ be a probability measure on $V$. We consider the following kinetic random equation:
\begin{equation}\label{eq:1}
\partial_{t}f^{\varepsilon}+\frac{v}{\varepsilon}\cdot\nabla_{x}f^{\varepsilon}=\frac{1}{\varepsilon^{2}}\left(M\rho^{\varepsilon}-f^{\varepsilon}\right)+\frac{1}{\varepsilon^{2}}\rho^{\varepsilon}v\cdot\nabla_{x} \bar{\mathtt{m}}^{\eps}_t,\quad\left(t,x,v\right)\in\mathbb{R}_{+}\times\mathbb{T}^{d}\times V,
\end{equation}
with initial condition
\begin{equation}\label{CI}
f^{\eps}(0)=f^{\eps}_\mathrm{in}\in L^2(\T^d \times V).
\end{equation}
In \eqref{eq:1}, $\rho^\eps$ is the density associated to $f^\eps$:
\begin{equation}\label{def:rhof}
\rho^\eps=\int_V f^\eps(v)d\nu(v).
\end{equation}	
The parameter $\eps>0$ is small and we will study the limit of \eqref{eq:1} when $\eps\to 0$. The random character of \eqref{eq:1} comes from the factor $v\cdot\nabla_{x}\bar{\mathtt{m}}^{\eps}_t$. In this term 
\begin{equation}\label{mrescaled}
\bar{\mathtt{m}}^{\eps}_t(x)=\bar{\mathtt{m}}_{\eps^{-2}t}(x),
\end{equation}
where $(\bar{\mathtt{m}}_t)$ is a stationary stochastic process over $C^r(\T^d)$, $r$ large enough (\textit{cf.} Section~\ref{sec:driving}). The function $M$ is a probability density function on $(V,\nu)$. We will assume that $M$ is bounded from above and from below: 
\begin{equation}\label{HypM}
\alpha\leq M(v)\leq\alpha^{-1},\text{ for }\nu\text{ a.e. }v\in V,
\end{equation} 
where $\alpha\in(0,1)$. Due to \eqref{mrescaled}, Equation (\ref{eq:1}) is obtained from the change of scale $f^{\eps}(t,x,v)=f(\eps^2 t,x,v)$, where $f$ is a solution to
\begin{equation}
\partial_{t}f+\eps v\cdot\nabla_{x}f=\left(M\rho-f\right)+\rho v\cdot\nabla_{x}\bar{\mathtt{m}}_t,\quad\left(t,x,v\right)\in\mathbb{R}_{+}\times\mathbb{T}^{d}\times V.\label{eq:2}
\end{equation}
Let $\mathrm{Leb}_d$ denote the Lebesgue measure on $\T^d$ and let $\mathrm{Leb}_d\times\nu$ denote the product measure on $\T^d\times V$. When the initial data for \eqref{eq:2} is a probability density on $\T^d\times V$ with respect to $\mathrm{Leb}_d\times\nu$, then so is $f(t)$ for all $t>0$. The associated probability is the law of a certain space-velocity process $(X(t),V(t))$. The evolution of $(X(t),V(t))$ is given as a coupling between the equation $\dot{X}(t)=\eps V(t)$ and the evolution of $V$ according to a jump process having the following parameters: the waiting times are  exponential laws of parameter one; at each waiting time $T$, the velocity is redistributed according to a probability measure of density
\begin{equation}\label{checkM}
\check{M}\colon v\mapsto M(v)+v\cdot\nabla_x\bar{\mathtt{m}}_T(X(T))
\end{equation}
with respect to $\nu$. In Remark~\ref{rk:runandtumble} below, we account for a possible application of this framework to the modelling of the motion by run-and-tumble of some given agents.
When $\eps=0$, \eqref{eq:2} reduces to the equation
\begin{equation}
\partial_{t}f=\left(M\rho-f\right)+\rho v\cdot\nabla_{x}\bar{\mathtt{m}}_t,\quad\left(t,x,v\right)\in\mathbb{R}_{+}\times\mathbb{T}^{d}\times V.\label{eq:3}
\end{equation}
Under some mixing hypotheses on the process $(\bar{\mathtt{m}}_t)$, Equation~\eqref{eq:3} has a unique invariant measure. This invariant measure is the law of a particular solution $\rho(x)\bar{M}_t$ (note that $x$ is a parameter in \eqref{eq:3}),
where
\begin{equation}\label{def:barMt}
\bar{M}_t=M+v\cdot\nabla_x\bar{\mathtt{w}}_t,\quad \bar{\mathtt{w}}_t := \int_{-\infty}^t e^{-(t-s)}\bar{\mathtt{m}}_s ds.
\end{equation} 
We refer to Section~\ref{sec:main-generator}, where the justification of the explicit formula~\eqref{def:barMt} is given. Consider the evolution corresponding to \eqref{eq:2}, when the initial datum is close\footnote{actually, it is not necessary to start close to equilibrium, since the dynamics of \eqref{eq:2} brings the solution close to local equilibrium in short time, see the bound on the entropy dissipation in 
\eqref{relativeentropy}-\eqref{entropydiss}
} 
to the equilibrium $\rho(x)\bar{M}_0(v)$. On the long time scale $\eps^{-2}t$, we show that the rescaled unknown $f^\eps$ solution to \eqref{eq:1} is close to a local equilibrium $\rho_t\bar{M}_{\eps^{-2}t}$, and we give the evolution for the macroscopic parameter $(\rho_t)$. The fact that $t\mapsto\eps^{-2}t$ is the pertinent change of time scale is due to the structure of \eqref{eq:2} and to the following cancellation and normalization properties of $M$ and $(V,\nu)$:
\begin{equation}\label{vdv}
\int_V Q(v) d\nu(v)=0,\quad \int_V M(v) d\nu(v)=1,
\end{equation}
where $Q(v)\in\{v_i,v_iM(v),v_i v_j v_k M(v)\}$, for $i,j,k\in\{1,\ldots,d\}$. We will also assume the following non degeneracy hypothesis:
\begin{equation}\label{nuND}
\forall\xi\in\R^d,\;(v\cdot\xi=0\mbox{ for }\nu-\mbox{a.e. }v\in V)\Rightarrow\xi=0.
\end{equation}
Our precise statement is given in Section~\ref{sec:mainresult}, after the following Section~\ref{sec:driving} where the nature of the random process $(\bar{\mathtt{m}}_t)$ is specified. General comments about the theme of this paper and related works are also postponed to Section~\ref{sec:mainresult}.

\subsection{The driving random term}\label{sec:driving}

The terminology about Markov processes used here and below is explained in Appendix~\ref{sec:appendix}.
Let $(\Omega,\mathcal{F},\PP)$ be a probability space.  Let $r\in\N$, $r>2+d/2$, and let $F=C^r(\T^d)$ be the Banach space with norm
$$
\|m\|_F=\sup\left\{|D^k m(x)|;x\in\T^d,0\leq |k|\leq r\right\},
$$
where $|k|=k_1+\dotsb+k_d$. Let $\mathtt{F}$ be a convex Borel subset of $F$. Let $\G$ denote the $\sigma$-algebra induced by the Borel $\sigma$-algebra of $F$ on $\mathtt{F}$. We consider a stationary, homogeneous, c{\`a}dl{\`a}g Markov process $(\bar{\mathtt{m}}_t)_{t\geq 0}$ with state space $\mathtt{F}$. We denote by $\mathtt{A}$ the infinitesimal generator of $(\bar{\mathtt{m}}_t)_{t\geq 0}$ and by $P(t,m,B)$ the transition kernel.
Up to a modification of the probability space, and by identification of versions of the processes, we are given, for each random variable $\mathtt{m}_0$ on $\mathtt{F}$, some processes $\mathtt{m}(t,s;\mathtt{m}_0)$ with transition function $P$, satisfying $\PP(\mathtt{m}(s,s;\mathtt{m}_0)\in B)=\PP(\mathtt{m}_0\in B)$. We can also assume that $(\bar{\mathtt{m}}_t)$ is defined for all $t\in\R$ (see the beginning of \cite[Section~2]{DebusscheVovelle20} for the justification of these assertions). We will then use the notations $\mathtt{m}(t,s;m)$ when $\mathtt{m}_0=m$ almost surely, and set $\mathtt{m}(t;m)=\mathtt{m}(t,0;m)$. We will also denote by $\lambda$ the law of $(\bar{\mathtt{m}}_t)_{t\geq 0}$, which is an invariant measure.
Our first hypothesis is that the process is almost surely bounded: there exists $\mathtt{b}\geq 0$ such that 
\begin{equation}\label{BallR}
\mathtt{F}\subset\bar{B}(0,\mathtt{b}),
\end{equation}
where $\bar{B}(0,\mathtt{b})$ is the closed ball of radius $\mathtt{b}$ centred at the origin in $F$.
We will assume that $\mathtt{b}$ is sufficiently small in order to ensure that the matrix $K^*$ defined by \eqref{Kflat} is positive:
\begin{equation}\label{Rsmall}
\mathtt{b}\leq\frac{\alpha}{4}.
\end{equation}

Our second hypothesis is that the invariant measure $\lambda$ is centred:
\begin{equation}\label{mcentred}
\int_\mathtt{F} m\ d\lambda(m)=\E\left[\bar{\mathtt{m}}_t\right]=0.
\end{equation}

Our third hypothesis is a mixing hypothesis: we assume that there exists a continuous, non-increasing, positive and integrable function $\gamma_\mathrm{mix}\in L^1(\R_+)$ such that, for all probability measures $\mu$, $\mu'$ on $\mathtt{F}$, for all random variables $\mathtt{m}_0$, $\mathtt{m}_0^\prime$ on $\mathtt{F}$ of law $\mu$ and $\mu'$ respectively, there is a coupling $((\hat{\mathtt{m}}_t)_{t\geq 0},(\hat{\mathtt{m}}^\prime_t)_{t\geq 0})$ of $(\mathtt{m}(t;\mathtt{m}_0))_{t\geq 0},(\mathtt{m}(t;\mathtt{m}_0^\prime))_{t\geq 0})$ such that 
\begin{equation}\label{mixCoupled}
\E\|\hat{\mathtt{m}}_t-\hat{\mathtt{m}}_t^\prime\|_F\leq\mathtt{b}\gamma_\mathrm{mix}(t),
\end{equation}
for all $t\geq 0$. 
Let $\theta\colon F\to\R$ be a continuous function, bounded on bounded sets of $F$. A consequence of \eqref{BallR} is that, for $\alpha> 0$, the \textit{resolvent}
\begin{equation}\label{resolventA}
U_\alpha\theta(m):=\int_0^\infty e^{-\alpha t}\E\theta(\mathtt{m}(t;m))dt,\quad m\in\mathtt{F},
\end{equation}
is well defined. In the limiting case $\alpha=0$, let us assume furthermore that $\theta$ is Lipschitz-continuous on bounded sets of $F$ and satisfies the cancellation condition $\dual{\theta}{\lambda}=0$. By Appendix~\ref{sec:app5}, the resolvent $U_0\theta(m)$ is also well-defined. These considerations show that, setting
\begin{equation}\label{defchin}
\chi(m)=K(1)\nabla_x m=\int_V v v_i\partial_{x_i} m d\nu(v)\in\R^d,\quad m\in F,
\end{equation}
the quantity $U_\alpha\chi(m)$ is well defined as an element of $\left[C^{r-1}(\T^d)\right]^d$ and satisfies the estimate
\begin{equation}\label{boundRchi}
\|U_\alpha\chi(m)\|_{C^{r-1}(\T^d)}\leq \mathtt{b}\|\gamma_\mathrm{mix}\|_{L^1(\R_+)},
\end{equation}
for all $\alpha\geq 0$ and $m\in\mathtt{F}$. Our last assumption (which is used to establish Proposition~\ref{prop:SquareOrderone}) is the following one. We consider a bounded li\-ne\-ar functional $\Lambda$, with norm $\|\Lambda\|$, on $\left[C^{r-1}(\T^d)\right]^d$. By composition, we may consider the functional $\Lambda\circ U_0\circ\chi\colon m\mapsto\Lambda\left[(U_0\chi(m))\right]$. We will assume the following bound: there exists a constant $C^0_\mathtt{b}\geq 0$ such that
\begin{equation}\label{AR0}
|\mathtt{A}\left|\Lambda\circ U_0\circ\chi\right|^2(m)|\leq C^0_\mathtt{b}\|\Lambda\|^2,
\end{equation}
for all $m\in\mathtt{F}$. 

\begin{definition}[Admissible pilot process] Let $\mathtt{F}$ be a closed subset of $F$. Let $(\bar{\mathtt{m}}_t)_{t\geq 0}$ be a c{\`a}dl{\`a}g, stationary, homogeneous Markov process with state space $\mathtt{F}$ and infinitesimal generator $\mathtt{A}$. We say that $(\bar{\mathtt{m}}_t)_{t\geq 0}$ is an admissible pilot process if the conditions \eqref{BallR}, \eqref{Rsmall}, \eqref{mcentred}, \eqref{mixCoupled}, \eqref{AR0}  above are satisfied and if $(\bar{\mathtt{m}}_t)_{t\geq 0}$ is stochastically continuous.
\label{def:admbarm}\end{definition}

\subsection{Some examples}\label{sec:example-driving}

Some examples of admissible pilot processes are provided by jump processes (\textit{e.g.} the jump process described in Section~2C of \cite{DebusscheVovelle20}) or diffusion processes. Let us give an example in this second class of processes. Consider an orthonormal basis $(n_j)_{j\geq 1}$ of $L^2(\T^d)$ made of smooth functions. We fix an integer $N>0$ and set
\begin{equation}\label{mtdiffusion}
\bar{\mathtt{m}}_t=\sum_{j=1}^N a_j \bar{Y}^j_t n_j,
\end{equation}
where $\bar{Y}^1,\bar{Y}^2,\dotsc$ are some i.i.d. processes with state space the interval $(-1,1)$ and $a_1,a_2,\dotsc$ some non-trivial real numbers converging fast enough to zero. When $N$ is finite, the definition \eqref{mtdiffusion} gives a process $(\bar{\mathtt{m}}_t)$ with state space $\mathtt{F}_N$ that has finite dimension. Indeed, $\mathtt{F}_N$ consists of all functions $m$ in the vector space generated by $n_1,\dotsc,n_N$ such that $|\dual{m}{a_j^{-1} n_j}_{L^2(\T^d)}|\leq 1$ for all $1\leq j\leq N$. By letting $N\to+\infty$ (we will not give the details of this procedure), one gets an admissible process with infinite-dimensional state space. Replacing $\bar{Y}^j_t$ by an element $y_j\in (-1,1)$ defines a function $H^\sharp\colon (-1,1)^N\to F$ which is a continuous bijection onto its image, with inverse $m\mapsto (\dual{m}{a_j^{-1} n_j}_{L^2(\T^d)})$. In this way, the process $\bar{\mathtt{m}}_t$ is deduced from $(\bar{Y}_t)$ by a change of coordinates (see Appendix~\ref{sec:app4}). The properties of $(\bar{\mathtt{m}}_t)$ essentially depend on the properties of one single component process $\bar{Y}$ and we will focus on the construction of such an adequate process. We want to build, therefore, a diffusion process with some ergodic properties and some bounded state space. Instances of diffusion processes with (one-dimensional) bounded state spaces are furnished by processes reflected or killed at the boundary, or Sturm-Liouville Markov processes with a drift that is singular at the boundary, \cite[Chapter~2]{BakryGentilLedoux14}. We choose to give an example of Sturm-Liouville Markov process $Y$ deduced from a change of coordinate applied to a diffusion process defined on the whole line. Let $(X_t)$ be the one-dimensional Markov process described by the SDE
\begin{equation}\label{SDEalpha}
dX_t=-f(X_t) dt+dB_t,\quad f(x):=\max(1,|x|^\alpha)x.
\end{equation}
In \eqref{SDEalpha}, $\alpha$ is a strictly positive number. For $\alpha=0$, we obtain the Ornstein-Uhlenbeck process. We need to restrict ourselves to the case of strictly positive $\alpha$ however, to ensure some uniformity with respect to the initial data in the construction of a coupling (see \eqref{egcoupling} below). The process $(X_t)$ is stationary when initialized with the law $\lambda^X$ of density given, up to a renormalizing factor, by
\[
d\lambda^X(x)\propto \exp\left(-2F(x)\right)dx,\quad F(x)=\int_0^x f(y) dy.
\]
By proceeding as in the proof of \cite[Theorem 1.1, p.157]{EthierKurtz86}, we may construct a stationary process $(\bar{X}_t)_{t\in\R}$ indexed by $\R$ such that, for every $t_0<t_1<\dotsb<t_k$, the $k$-uplet
\begin{equation}\label{bar0bar}
(\bar{X}_{t_0},\dotsc,\bar{X}_{t_k})\mbox{ and }(\bar{X}^0_{t_0+s_0},\dotsc,\bar{X}^0_{t_k+s_0})
\end{equation}
have the same law, where $s_0$ is any real such that $t_0+s_0\geq 0$. By some standard estimates based on the integral form of \eqref{SDEalpha} one can show that the increments of $(\bar{X}^0_t)_{t\in\R_+}$ satisfy the estimate
\begin{equation}\label{incrementbarX0}
\E|\bar{X}^0_t-\bar{X}^0_s|^4\leq C|t-s|^2,\quad t,s\in [0,T],
\end{equation}
where $C$ is a constant depending on $\alpha$ and $T$. By identity of the laws in \eqref{bar0bar}, we see that we can apply the Kolmogorov criterion to $(\bar{X}_t)_{t\in\R}$ on any compact interval of $\R$. We may therefore construct a version of $(\bar{X}_t)_{t\in\R}$ with continuous trajectories, and it is this version that we consider now. The estimate \eqref{incrementbarX0} also shows that $(\bar{X}_t)_{t\in\R}$ is stochastically continuous.
The state space of $X$ is $\R$. Let $H\colon\R\to (-1,1)$ be a bi-measurable bijection which is globally Lipschitz continuous, \textit{e.g.} a multiple of the $\mathrm{arctan}$ function. We set $Y_t=H(X_t)$, $\bar{Y}_t=H(\bar{X}_t)$. Let $X_0$, $X_0^\prime$ be some random initial data. We consider the synchronous coupling $(\hat{X}_t,\hat{X}^\prime_t)$, which consists in letting two instances of the process defined by \eqref{SDEalpha} be driven by the same Brownian motion, while starting from $(X_0,X^\prime_0)$. Our aim is to establish an estimate analogous to \eqref{mixCoupled}. Since $f$ has the property $(f(x)-f(y))(x-y)\geq|x-y|^2$, we see, by forming the difference $\hat{X}_t-\hat{X}^\prime_t$ that we have the following pathwise convergence property: for $\PP$-almost all $\omega$, for all measurable time $t_1(\omega)$, for all $t\geq t_1(\omega)$, $|\hat{X}_t-\hat{X}^\prime_t|\leq |\hat{X}_{t_1}-\hat{X}^\prime_{t_1}|e^{-(t-t_1)}$. We apply this inequality with $t_1$ given as the the stopping time
\begin{equation}\label{deftau1}
t_1:=\inf\left\{t\geq 0; |\hat{X}_t|+|\hat{X}_t^\prime|\leq R\right\}.
\end{equation}
The parameter $R$ will be fixed later on. We obtain $\E\left[\mathbf{1}_{t_1\leq t/2}|\hat{X}_t-\hat{X}^\prime_t|\right]\leq R e^{-t/2}$. To get an estimate on the other part $\E\left[\mathbf{1}_{t_1>t/2}|\hat{X}_t-\hat{X}^\prime_t|\right]$, we use the Cauchy-Schwarz inequality. Up to a numerical constant, this gives a bound by the quantity $\PP(t_1>t/2)^{1/2}\left[\E|\hat{X}_t|^2+\E|\hat{X}^\prime_t|^2\right]^{1/2}$.
By the union bound, we have $\PP(t_1>t/2)\leq\PP(|\hat{X}_{t/2}|> R/2)+\PP(|\hat{X}^\prime_{t/2}|> R/2)$, which, using the Markov inequality, we estimate from above by $4R^{-2}\left(\E|X_{t/2}^*|^2+\E|X^{*,\prime}_{t/2}|^2\right)$.
Eventually, we conclude to the following estimate:
\begin{equation}\label{coupling3}
\E\left[|\hat{X}_t-\hat{X}^\prime_t|\right]\leq Re^{-t/2}+8 R^{-2}\left[\sup_{s\geq t/2}\E|\hat{X}_s|^2+\sup_{s\geq t/2}\E|\hat{X}^\prime_s|^2\right]^{3/2}.
\end{equation}
Admit for the moment the following statement.
\begin{proposition} There exists $\kappa>0$ depending on $\alpha$ such that every solution $(X_t)$ to \eqref{SDEalpha} satisfies
\begin{equation}\label{boundSDEalpha}
\E\left[|X_t|^2\right]\leq\kappa\max(1,t^{-1/(1+\alpha)}),
\end{equation}
for all $t>0$.
\label{prop:boundSDEalpha}\end{proposition}
The bound \eqref{boundSDEalpha} is uniform with respect to the starting point $X_0$. For $t\geq 2$, this gives in \eqref{coupling3} the estimate $
\E\left[|\hat{X}_t-\hat{X}^\prime_t|\right]\leq Re^{-t/2}+CR^{-2}$,
where $C$ is a constant depending on $\alpha$. Optimizing in the parameter $R$, we deduce finally that
\begin{equation}\label{egcoupling}
\E\left[|\hat{X}_t-\hat{X}^\prime_t|\right]\leq Ce^{-t/3},
\end{equation}
for all $t\geq 2$, where $C$ is possibly a different constant depending also on $\alpha$. Since $H$ is Lipschitz continuous, the estimate \eqref{egcoupling} can be transferred to $Y$: the coupling $\hat{Y}_t=H(\hat{X}_t)$, $\hat{Y}^\prime_t=H(\hat{X}^\prime_t)$, where $X_0=H^{-1}(Y_0)$, $X_0^\prime=H^{-1}(Y_0^\prime)$, satisfies $\E|\hat{Y}_t-\hat{Y}^\prime_t|\leq C\mathrm{Lip}(H)e^{-t/3}$, for all $t\geq 2$. This gives an instance of \eqref{mixCoupled}. There remains to examine the condition~\eqref{AR0}. Let $U_0^Y$ denote the resolvent operator
\[
U_0^Y\varphi(y)=\int_0^\infty P^Y_t\varphi(y)dt.
\]
The decomposition \eqref{mtdiffusion} gives 
\[
\Lambda\circ U_0\circ\chi(m)=\sum_{j=1}^N a_j \left[U_0^{Y^j}\mathrm{Id}(y^j)\right] \Lambda\circ\chi(n_j),
\]
where $\mathrm{Id}$ is the identity of $(-1,1)$. To get \eqref{AR0}, it is sufficient to ensure that $|U_0^Y\mathrm{Id}|^2\in D(\LL^Y)$. By the correspondences set up in Appendix~\ref{sec:app4}, this is equivalent to the fact that $|U_0^X\mathrm{Id}|^2\in D(\LL^X)$, where $\mathrm{Id}$ now denotes the identity map on $\R$. This can be easily checked: since $e^{-2F}$ is an integrating factor for $\LL^X\colon\varphi\mapsto -f\varphi'+\frac12\varphi''$,
we have the explicit expression
\begin{equation}\label{U0X}
(U^X_0\psi)'(x)=-2\int_x^\infty\psi(y)e^{2(F(x)-F(y)}dy,
\end{equation}
which shows that, when $\psi(y)=y$ (or any odd polynomial), $U^X_0\psi$ is $C^2(\R)$.
To complete the des\-crip\-tion of this example based on diffusion processes, we still have to give the proof of Proposition~\ref{prop:boundSDEalpha}.

\begin{proof}[Proof of Proposition~\ref{prop:boundSDEalpha}] By the It\^o formula, we have
\[
\frac{d\;}{dt}\E\left[|X_t|^2\right]\leq -2\E\left[f(X_t)X_t\right]+1,\quad t>0.
\]
Since $x\mapsto f(x)x$ is convex, we can use the Jensen inequality to obtain the differential inequality
\[
\varphi'(t)\leq g(\varphi(t)),\quad\varphi(t)=\E\left[|X_t|^2\right],\quad g(\varphi)=-2f(\varphi)\varphi+1.
\]
For the value $B=(2(1+\alpha))^{-1/(1+\alpha)}$, the function $t\mapsto\max(1,Bt^{-1/(1+\alpha)})$ is a super-solution to the differential equation $\psi'(t)=g(\psi(t))$. By comparison, starting from an initial $t_0$ small enough, we obtain the desired bound \eqref{boundSDEalpha}.
\end{proof}

\subsection{Main result}\label{sec:mainresult}

\begin{theorem}\label{th:mainth} Assume that $(\bar{\mathtt{m}}_t)$ is an admissible pilot process in the sense of De\-fi\-ni\-tion~\ref{def:admbarm}. Assume that $(M,V,\nu)$ satisfy \eqref{HypM}, \eqref{vdv}, \eqref{nuND}. Let $f^\eps_\mathrm{in}\in L^2(\T^d\times V)$ be a sequence of \textit{non-negative} functions and let $\rho^\eps_\mathrm{in}(x)=\int_V f^\eps_\mathrm{in}(x,v)d\nu(v)$. Suppose also that 
\begin{equation}\label{Hypin}
\rho^\eps_\mathrm{in}\to\rho_\mathrm{in}\text{ in }L^2(\T^d),\quad\sup_{0<\eps<1}\|f^\eps_\mathrm{in}\|_{L^2(\T^d\times V)}\leq C_\mathrm{in}<+\infty,
\end{equation}
and that $\rho_\mathrm{in}\in C^{2+\delta_\mathrm{in}}(\T^d)$ for some positive $\delta_\mathrm{in}$.
Let $f^\eps_t$ be the solution to \eqref{eq:1} with initial datum $f_\mathrm{in}^\eps$ and let $\rho^\eps_t$ be the density associated to $f^\eps_t$ by \eqref{def:rhof}. Let $\bar{M}^\eps_t=\bar{M}_{\eps^{-2}t}$, where $\bar{M}_t$ is the equilibrium given by \eqref{def:barMt}.
Then we have 
\begin{equation}\label{onLocalEquilibrium}
\int_0^t \|f^\eps_s-\rho^\eps_s\bar{M}^\eps_s\|_{L^2(\T^d\times V)}^2ds\leq\frac{C_\mathrm{in}^2 e^t}{\alpha^2}\eps^2,
\end{equation}
almost surely, and, for all $\sigma>0$, the convergence $\rho^\eps_t\to\rho^0_t$ in law on $C([0,T];H^{-\sigma}(\T^d))$, where $\rho^0$ is the solution, in the sense given in Section~\ref{sec:limeq}, to the stochastic partial differential equation
\begin{equation}\label{eq:limeq}
d\rho^0=\divv(K^*\nabla_x\rho^0+\Psi\rho^0)dt+\sqrt{2}\divv_x(\rho^0 S^{1/2}dW(t)),
\end{equation}
with initial condition $\rho^0(0)=\rho_\mathrm{in}$. In \eqref{eq:limeq}, $W(t)$ is a cylindrical Wiener process on $L^2(\T^d)$, $S$ is the covariance operator defined by \eqref{SKernelK}. The coefficients $K^*$ and $\Psi$ have the following expression:
\begin{equation}\label{Kflat}
K^*=K(M)+\E\left[(U_0U_1\chi)(\bar{\mathtt{m}}_0)\otimes\chi(\bar{\mathtt{m}}_0) \right],\quad K(M):=\int_V v\otimes v M(v) d\nu(v),
\end{equation}
and
\begin{equation}\label{coeffPsi}
\Psi=\E\left[\divv_x[\chi(\bar{\mathtt{m}}_0)](U_0U_1\chi)(\bar{\mathtt{m}}_0)\right],
\end{equation}
where $\chi(n)$ and the resolvent $U_\alpha$ are defined in \eqref{defchin} and \eqref{resolventA} respectively.
\end{theorem}
We do several remarks about the content of Theorem	~\ref{th:mainth}.

\begin{remark}[Solutions to \eqref{eq:limeq}] The resolution of \eqref{eq:limeq} is explained in Section~\ref{sec:limeqSolve}. We work in a framework of classical solution (this is the reason why we require the regularity $C^{2+\delta_\mathrm{in}}(\T^d)$ on $\rho_\mathrm{in}$). See the discussion at the beginning of Section~\ref{sec:limeqSolve}.
\end{remark}

\begin{remark}[Enhanced diffusion] In the deterministic case $\bar{\mathtt{m}}^{\eps}\equiv 0$, $f^\eps$ converges to $\rho M$, where $\rho$ is the solution of the diffusion equation
\begin{equation}\label{eq:eqlimdet}
\partial_t \rho - \mathrm{div}\left( K(M)\nabla_x \rho \right)=0,
\end{equation}
with initial condition $\rho(0)=\rho_\mathrm{in}$, see \cite{DegondGoudonPoupaud00}, for example, for a proof of this result. We prove in Proposition~\ref{prop:Kstar} that, at least when the process $(\bar{\mathtt{m}}_t)$ is reversible, we have $K^*\geq K(M)$, in the sense that the matrix $K^*-K(M)$ is non-negative. If we are only concerned with the convergence of the average $r^\eps:=\E\rho^\eps$, we obtain a result of convergence $r^\eps\to r$ in $C([0,T];H^{-\eta}(\T^d))$, where $r$ is a solution to 
\begin{equation}\label{eq:eqlimdetr}
\partial_t r - \mathrm{div}\left( K^*\nabla_x r+\Psi r\right)=0,
\end{equation}
an equation comparable to \eqref{eq:eqlimdet}, with enhanced diffusion.
\label{rk:enhancedDiff}\end{remark}


\begin{remark}[Diffusion-approximation in PDEs] Diffusion-approximation for PDEs has been studied by Pardoux and Piatnitski \cite{PardouxPiatnitski03}, in the context of stochastic homogenization of parabolic equations, by Marty, De Bouard, Debussche, Gazeau, Tsutsumi \cite{Marty06,DeBouardDebussche10,DebusscheTsutsumi11,DeBouardGazeau12} for Schr\"odinger equations and by Bal, Fouque, Garnier, Papanicolaou, S{\o}lna and their co-authors (see \cite{BalGu15,FouqueGarnierPapanicolaouSolna07,GarnierSolna16} for example) for propagation of waves in random media. In the context of hydrodynamic limits, we refer to \cite{DebusscheVovelle12,DebusscheDeMoorVovelle16,DebusscheVovelle20}. A comparison of our work with the reference \cite{DebusscheVovelle20} is done in the next remark.
\end{remark}

\begin{remark}[Comparison with \cite{DebusscheVovelle20}] In the first two papers \cite{DebusscheVovelle12,DebusscheDeMoorVovelle16} the order of the stochastic perturbation is weaker than here in \eqref{eq:1} and, more precisely, the progression is the following one: in \cite{DebusscheVovelle12}, the perturbed test-function method of \cite{PapanicolaouStroockVaradhan77}, developed in the context of ordinary differential equation is combined with the deterministic hydrodynamic limit. In \cite{DebusscheDeMoorVovelle16}, tools for strong convergence are developed and non-linear equations are treated. In \cite{DebusscheVovelle20}, more singular problems (more singular in the sense that the equilibria of the unperturbed equation are stochastic, not deterministic) are considered, in a linear setting however. Here also we consider a singular situation in a linear setting, a framework which is very close to the one considered in \cite{DebusscheVovelle20}. The noticeable difference with \cite{DebusscheVovelle20} is the fact that the space $V$ of velocity is bounded here, while in \cite{DebusscheVovelle20}, $V$ is the whole space $\R^d$. As a consequence, we are able to establish \eqref{onLocalEquilibrium} by means of a relative entropy estimate. This procedure is not working for the time being for the problem considered in \cite{DebusscheVovelle20}, since the properties of localisation in $v$ of the solution are not sufficiently controlled.  The algebra for the computation of the coefficients of the limit equations are quite different in \cite{DebusscheVovelle20} also. This is why we must assume here that $(\bar{\mathtt{m}}_t)$ is reversible to show enhanced diffusion (\textit{cf.} Remark~\ref{rk:enhancedDiff}). Nevertheless, the techniques used in \cite{DebusscheVovelle20} and in the present paper are very similar. This the reason why, sometimes, we use some facts established in \cite{DebusscheVovelle20}. Let us list them: the proof of the Markov property in Theorem~\ref{th:MarkovPty} uses \cite[Theorem~4.5]{DebusscheVovelle20}; the tightness result Proposition~\ref{prop:tight} is similar to \cite[Proposition~5.11]{DebusscheVovelle20}; the identification of the limit via the L\'evy representation theorem in Corollary~\ref{cor:MartingalePbtilde} and the paragraph that follows uses \cite[Section~5E2]{DebusscheVovelle20}; the estimate in Lemma~\ref{lemma:l2coeff} is similar to \cite[Proposition~5.14]{DebusscheVovelle20}; the regularization procedure used in the uniqueness result of Section~\ref{sec:CL} is given with all details in \cite[Section~5E3]{DebusscheVovelle20}; the material on the martingale problem for Markov processes of Appendix~\ref{sec:app3} is taken from the appendix of \cite{DebusscheVovelle20}. At the same time our presentation improves, in our opinion, some similar parts in \cite{DebusscheVovelle20}, in particular by the preparation done in Section~\ref{sec:subsecgenerator2} to the perturbed test-function method, by the result given in Proposition~\ref{prop:SquareOrderone} on the square of the first-order correction, and by the synthesis done in Appendix~\ref{sec:appendix}. In any case, and like \cite{DebusscheVovelle20}, our work leaves open the question of strong convergence of $(\rho^\eps)$. In particular, we are for the moment not able to prove the convergence in law in a space of functions like $L^2([0,T]\times\T^d)$ for instance. This is a limit to the extension of this present work to non-linear equations.
\end{remark}

\begin{remark}[Model of motion by run-and-tumble] We come back to the discussion around \eqref{checkM}. Evolution of agents by run-and-tumble processes has been described in \cite{OthmerDunbarAlt1988} for instance. More precisely, we refer to Section~3 in \cite{OthmerDunbarAlt1988}, where is derived the evolution equation $\partial_t f(t,x,v)+v\cdot\nabla_x f=\mathcal{R}f$, with
\begin{equation}\label{evolODA88}
\mathcal{R}f(t,x,v)=\lambda\int_V \left[T(t,x,v,v')f(t,x,v')-T(t,x,v',v)f(t,x,v)\right]d\nu(v'),
\end{equation}
in the case where the turning kernel $T(t,x,v,v')$ that gives the probability of a jump from $v'$ to $v$ in the redistribution process is independent of $(t,x)$. Possible choices for the turning kernel are discussed in Section~4 of \cite{OthmerDunbarAlt1988}. We also refer to \cite{OthmerHillen01,ChalubMarkowichPerthameSchmeiser04} for models for chemotaxis and related diffusion limits. Here we consider the case $T(t,x,v,v')=M(v)+v\cdot\nabla_x\bar{\mathtt{m}}_t(x)$, see Eq.(23) \& (27) in \cite{ChalubMarkowichPerthameSchmeiser04} for instance. We see that the term related to $\bar{\mathtt{m}}_t(x)$ in $T(t,x,v,v')$ induces a bias in the redistribution process, making more probable the direction $\nabla_x\bar{\mathtt{m}}_t(x)$. If $\bar{\mathtt{m}}_t(x)$ were the concentration of a chemotactic attractant at time $t$ at point $x$, our choice of the turning kernel would give more weight to velocities that drive the organism under consideration towards zone with higher concentration of chemotactic substance. This is conceivable if the organism is big enough to be sensitive to gradients of the chemotactic substance at its own scale. Standard models involve in general the full material derivative $\partial_t S+v\cdot\nabla_x S$ of the chemo-attractant $S$, \cite{SCBPBS11}.
\label{rk:runandtumble}\end{remark}

The organization of the paper is the following one. In Section~\ref{sec:generator}, we study Equation~\eqref{eq:1} at fixed $\eps$, and various properties of the associated Markov process. In particular, we solve the Poisson equation corresponding to Equation~\eqref{eq:3}. This is applied in Section~\ref{sec:perturbed} to set up a method of perturbed test-function method. This method yields the limit generator that arises when $\eps\to 0$. We analyse this generator in Section~\ref{sec:arrange}. The associated SPDE in solved in Section~\ref{sec:limeq}. In the anterior Section~\ref{sec:tightness} we prove the convergence in law of $(\rho^\eps)$ towards a weak solution of the limit equation~\eqref{eq:limeq}. In the last Section~\ref{sec:CL}, we show that this weak solution is unique and strong. In Appendix~\ref{sec:appendix}, various results on Markov processes are given. Although it increases the length of the paper, this addendum seemed to us necessary, for two very different reasons. First, we wanted to specify our approach to infinitesimal generator of Markov processes. Various approaches to this question are possible indeed, especially in infinite dimension (\textit{cf.} the short discussion in appendix~\ref{sec:app3}). We also wanted to gather here in a synthetic way various, somehow related, results whose application in scattered in the paper and is losing its coherence there.

\section{Infinitesimal generator}\label{sec:generator}

\subsection{Notations}\label{sec:notations}

The three first moments of a function $f\in L^1(V,\nu)$ are denoted by
\begin{equation}\label{moments}
R(f)=\int_V f(v)d\nu(v),\quad J(f)=\int_V v f(v)d\nu(v),\quad K(f)=\int_V v\otimes v f(v)d\nu(v).
\end{equation} 
If $E$ is a Banach space and $I$ an interval in $\R$, we denote by $D(I;E)$ the Skorokhod space of c{\`a}dl{\`a}g functions from $I$ to $E$ (see \cite{BillingsleyBook,JacodShiryaev03}). We denote by $\dual{f}{g}$ the duality product between $f$ and $g$, which may be functions defined on $\T^d$ or $\T^d\times V$, \textit{e.g.} $f\in L^1(\T^d\times V)$, $g\in L^\infty(\T^d\times V)$, or distribution and test function. We will specify the duality at stake if necessary. Given $a,b\in\R^d$, $a\otimes b$ is the $d\times d$ matrix with entries $a_ib_j$. The scalar product of two $d\times d$ matrices $A,B$ with real entries is denoted $A : B$. Using the convention of summation over repeated indexes, this is $A:B=A_{ij}B_{ij}$. Let us also give a comment on the terminology here: we will speak of functions which are Lipschitz continuous on bounded sets. Although more concise, we will not use the expression \textit{locally Lipschitz continuous}, since this is sometimes used to indicate a Lipschitz property on restriction to compact sets only. We will denote by $\Phi_t(x,v)=(x+tv,v)$ the flow associated to the field $(v,0)$. Note that $\Phi_t$ preserves the measure on $\T^d\times V$. 

\subsection{Resolution of the kinetic equation}\label{sec:soleps}

We consider here the resolution of the Cauchy Problem \eqref{eq:1}-\eqref{CI} at fixed $\eps$. We assume $\eps=1$ for simplicity. We will give a pathwise resolution of \eqref{eq:1}-\eqref{CI}. More exactly, we construct a solution map $
((\bar{\mathtt{m}}_t),f_\mathrm{in})\mapsto f^\eps$. Since only $q_t(x):=\nabla_x\bar{\mathtt{m}}_t(x)$ does matter here, we will fix $T>0$ and consider the equation
\begin{equation}\label{eq:eps1}
\partial_t f+v\cdot\nabla_x f=R(f)M-f+ R(f) v\cdot q.
\end{equation}
We will construct in Theorem~\ref{th:CYLB} below a solution map 
\begin{equation}\label{SpacesSolve}
L^1(0,t;C(\T^d;\R^d))\times L^1(\T^d\times V)\to C([0,t];L^1(\T^d\times V)),\quad (q,f_\mathrm{in})\mapsto f(t),
\end{equation}
which is continuous. Let us discuss the choice of the space $L^1(0,T;C(\T^d;\R^d))$ made for the variable $q$ here. The state space of the process $(\bar{\mathtt{m}}_t)$ is the Skorokhod space $D([0,T];F)$. The process $q_t=\nabla_x\bar{\mathtt{m}}_t$ is then an element of $D([0,T];C^{r-1}(\T^d;\R^d))$. When solving \eqref{eq:eps1} in the space of functions $f$ continuous in time with values $L^1(\T^d\times V)$, it is quite natural to take the datum $q$ in $L^1(0,T;C(\T^d;\R^d))$. This is not contradictory, since we have the continuous injections
\begin{equation}\label{DDL1}
D([0,T];C^{r-1}(\T^d;\R^d))\hookrightarrow D([0,T];C(\T^d;\R^d))\hookrightarrow L^1(0,T;C(\T^d;\R^d)).
\end{equation}
Let us justify the second injection in \eqref{DDL1} (the first one being obvious). Since $D([0,T];C(\T^d;\R^d))$ is metrizable by \cite[Proposition 1.6]{Jakubowski86}, we consider a sequence $(q_n)$ converging to $q$ in the space $D([0,T];C(\T^d;\R^d))$. This means that there is a sequence $(\lambda_n)$ of strictly increasing maps $\lambda_n\colon[0,T]\to[0,T]$ such that: $\lambda_n(0)=0$, $\lambda_n(T)=T$ for all $n$, and
\begin{equation}\label{cvqn}
\sup_{t\in[0,T]}|t-\lambda_n(t)|\to 0,\quad \sup_{t\in[0,T]}\|q(t)-q_n(\lambda_n(t))\|_{C(\T^d;\R^d)}\to 0,
\end{equation}
when $n\to+\infty$. We deduce from \eqref{cvqn} that
\begin{equation}\label{cvqnINV}
\sup_{t\in[0,T]}|t-\lambda_n^{-1}(t)|\to 0,\quad \sup_{t\in[0,T]}\|q(\lambda_n^{-1}(t))-q_n(t)\|_{C(\T^d;\R^d)}\to 0,
\end{equation}
when $n\to+\infty$. From the triangular inequality
\[
\|q(t)-q_n(t)\|_{C(\T^d;\R^d)}\leq \|q(t)-q(\lambda_n^{-1}(t))\|_{C(\T^d;\R^d)}+\|q(\lambda_n^{-1}(t))-q_n(t)\|_{C(\T^d;\R^d)}
\] 
and \eqref{cvqnINV}, we deduce that $q_n(t)\to q(t)$ in $C(\T^d;\R^d)$ if $q$ is continuous at $t$. The set of points of discontinuity of $q$ is at most countable, \cite[p.264]{Jakubowski86}, therefore $q_n(t)\to q(t)$ for almost all $t\in[0,T]$. By \eqref{cvqnINV}, the quantity $\sup_{t\in[0,T]}\|q_n(t)\|_{C(\T^d;\R^d)}$ is uniformly bounded in $n$. We can apply the Lebesgue dominated convergence theorem to conclude that $q_n\to q$ in $L^1(0,T;C(\T^d;\R^d))$.

\begin{definition} Let $f_\mathrm{in}\in L^1(\T^d\times V)$, let $q\in L^1(0,T;C(\T^d;\R^d))$. Let $\Phi_t(x,v)=(x+tv,v)$. A continuous function from $[0,T]$ to $L^1(\T^d\times V)$ is said to be a mild solution to \eqref{eq:eps1} with initial datum $f_\mathrm{in}$ if
\begin{equation}\label{mildLB}
f(t)=e^{-t}f_\mathrm{in}\circ\Phi_{-t}+\int_0^t e^{-(t-s)}[ R(f(s)) (M+v\cdot q(s,\cdot))]\circ\Phi_{-(t-s)}ds,
\end{equation}
for all $t\in[0,T]$.
\label{def:mildLB}\end{definition}

\begin{theorem} Let $f_\mathrm{in}\in L^1(\T^d\times V)$, let $q\in L^1(0,T;C(\T^d;\R^d))$. There exists a unique mild solution to \eqref{eq:eps1} in $C([0,T];L^1(\T^d\times V))$ with initial datum $f_\mathrm{in}$. It satisfies the bound
\begin{equation}\label{sol:fLB}
\|f(t)\|_{L^1(\T^d\times V)}\leq e^{\int_0^t\|q(s)\|_{C(\T^d;\R^d)}ds}\|f_\mathrm{in}\|_{L^1(\T^d\times V)}\quad\text{for all }t\in [0,T].
\end{equation} 
Two mild solutions $f^1$, $f^2$ associated to two sets of data $(f^j_\mathrm{in},q^j)$, $j=1,2$, satisfy the estimate
\begin{multline}\label{sol:fLB12}
\|f^1(t)-f^2(t)\|_{L^1(\T^d\times V)}
\leq A(t)\left(\|f^1_\mathrm{in}-f^2_\mathrm{in}\|_{L^1(\T^d\times V)}+\int_0^T\|q^1(t)-q^2(t)\|_{C(\T^d;\R^d)}\right),
\end{multline} 
for all $t\in [0,T]$, where the constant $A$ depends on $t$, and on $\|f^j_\mathrm{in}\|_{L^1(\T^d\times V)}$ and $\|q^j\|_{L^1(0,t;C(\T^d;\R^d))}$, for $j=1,2$.
In particular, the map \eqref{SpacesSolve} is Lipschitz continuous on bounded sets.
\label{th:CYLB}\end{theorem}

\begin{proof}[Proof of Theorem~\ref{th:CYLB}] Let $E_T$ denote the  space of continuous functions from $[0,T]$ to $L^1(\T^d\times\R^d)$. We use the norm
$
\|f\|_{E_T}=\sup_{t\in[0,T]}\left(1+\|q(s)\|_{C(\T^d;\R^d)}\right)\|f(t)\|_{L^1(\T^d\times V)}
$
on $E_T$. Note that
\begin{equation}\label{rhotof}
\| R(f)\|_{L^1(\T^d)}\leq\|f\|_{L^1(\T^d\times V)}.
\end{equation}
Let $f\in E_T$. Assume that \eqref{mildLB} is satisfied. Then, by \eqref{rhotof}, and due to the fact that $v\in V$ has a norm less than $1$, we have
\begin{align*}
\|f(t)\|_{L^1(\T^d\times V)}\leq & e^{-t}\|f_\mathrm{in}\|_{L^1(\T^d\times V)}+\int_0^t e^{-(t-s)}(1+\|q(s)\|_{C(\T^d;\R^d)})\|f(s)\|_{L^1(\T^d\times V)}ds.
\end{align*}
By Gr\"onwall's Lemma applied to $t\mapsto e^t\|f(t)\|_{L^1(\T^d\times V)}$, we obtain \eqref{sol:fLB} as an a priori estimate.
Besides, the $L^1$-norm of the integral term in \eqref{mildLB} can be estimated by $(1-e^{-T})\|f\|_{E_T}$. This means that the application which, to $f\in E_t$, associates the right-hand side of \eqref{mildLB}, is a contraction of $E_T$. Existence and uniqueness of a solution to \eqref{mildLB} in $E_T$ follow from the Banach fixed point Theorem. Using the linearity of the equation, \eqref{sol:fLB12} is obtained by similar estimates.
\end{proof}

To complete Theorem~\ref{th:CYLB}, we give the following result.

\begin{proposition}[Non-negative solutions] Let $f_\mathrm{in}\in L^1(\T^d\times V)$, let $q\in L^1(0,T;C(\T^d;\R^d))$. Let $f$ be the unique mild solution to \eqref{eq:eps1} in $C([0,T];L^1(\T^d\times V))$ with initial datum $f_\mathrm{in}$. Assume that
\begin{equation}\label{ModifiedEqPos}
M(v)+v\cdot q(t,x)\geq 0,
\end{equation}
for a.e. $(t,x,v)\in (0,T)\times\T^d\times V$ and that $f_\mathrm{in}\geq 0$ a.e. Then $f\geq 0$ a.e. on $(0,T)\times\T^d\times V$.
\label{prop:posmildLB}\end{proposition}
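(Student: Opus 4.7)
The plan is to run a Picard iteration for the mild equation \eqref{mildLB} starting from the zero function, observe that the iterates are manifestly non-negative under the hypothesis \eqref{ModifiedEqPos}, and then pass to the limit.

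More precisely, with the same notations as in the proof of Theorem~\ref{th:CYLB}, let $\mathscr{T}\colon E_T\to E_T$ be the map
\[
\mathscr{T}(f)(t)=e^{-t}f_\mathrm{in}\circ\Phi_{-t}+\int_0^t e^{-(t-s)}\bigl[\rho(f(s))\bigl(M+v\cdot q(s,\cdot)\bigr)\bigr]\circ\Phi_{-(t-s)}\,ds,
\]
whose unique fixed point in $E_T$ is $f$ (by Theorem~\ref{th:CYLB}, $\mathscr{T}$ is a contraction of $(E_T,\|\cdot\|_{E_T})$). Define the sequence $f^{(0)}=0$ and $f^{(n+1)}=\mathscr{T}(f^{(n)})$. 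The Banach fixed point theorem gives $f^{(n)}\to f$ in $E_T$, hence in particular $f^{(n)}(t)\to f(t)$ in $L^1(\T^d\times V)$ for every $t\in[0,T]$.

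The core observation is that non-negativity is preserved by $\mathscr{T}$: if $g\geq 0$ a.e.\ on $[0,T]\times\T^d\times V$, then $\rho(g(s))\geq 0$ a.e.\ on $[0,T]\times\T^d$ (since $\rho$ is an integral of a non-negative function against the probability measure $\nu$), and therefore, using the assumption \eqref{ModifiedEqPos}, the integrand in the definition of $\mathscr{T}(g)$ is non-negative a.e.; together with $f_\mathrm{in}\geq 0$, this gives $\mathscr{T}(g)\geq 0$. By induction, $f^{(n)}\geq 0$ a.e.\ on $[0,T]\times\T^d\times V$ for every $n$.

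Since $f^{(n)}(t)\to f(t)$ in $L^1(\T^d\times V)$ for each $t$, extracting a subsequence converging a.e.\ and passing to the limit in the inequality $f^{(n)}(t)\geq 0$ yields $f(t)\geq 0$ a.e.\ on $\T^d\times V$ for every $t\in[0,T]$, which is the desired conclusion. No step is really an obstacle here; the only point where one has to be slightly careful is the a.e.\ propagation, which is handled by taking subsequences.
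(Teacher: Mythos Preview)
Your argument is correct, and it takes a genuinely different route from the paper's proof. The paper does not iterate: it works directly with the fixed point $f$, applies the convexity of $s\mapsto s^-$ to the mild formulation integrated in $v$, and obtains a closed Gronwall-type inequality
\[
e^t\|\rho(f)^-(t)\|_{L^1(\T^d)}\leq\int_0^t e^{s}\|\rho(f)^-(s)\|_{L^1(\T^d)}\,ds,
\]
forcing $\rho(f)^-\equiv 0$; non-negativity of $f$ then follows from \eqref{mildLB}. Your approach instead exploits that the contraction $\mathscr{T}$ from Theorem~\ref{th:CYLB} preserves the cone of non-negative functions (thanks to \eqref{ModifiedEqPos} and $f_\mathrm{in}\geq 0$), so that the Picard iterates starting from $0$ stay non-negative and their $E_T$-limit inherits the sign. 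Your argument is arguably more elementary and reuses the contraction already established; the paper's argument is self-contained at the level of the solution and illustrates a technique (controlling the negative part via Gronwall) that remains available in settings where the fixed-point map is not a contraction or where iterates are not easily seen to be monotone.
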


\begin{proof}[Proof of Proposition~\ref{prop:posmildLB}] In view of \eqref{mildLB}, it is sufficient to show that $ R(f)\geq 0$ a.e. on $(0,T)\times\T^d$. We write $R(f)(t)$ as the convex combination
$$
 R(f)(t)=e^{-t} R(f_\mathrm{in}\circ\Phi_{-t})+(1-e^{-t})\int_0^t\int_V \frac{e^{-(t-s)}}{1-e^{-t}}[ R(f(s)) (M+v\cdot q(s,\cdot))]\circ\Phi_{-(t-s)}ds d\nu.
$$
By convexity of $s\mapsto s^-$, we deduce that
\begin{multline*}
[ R(f)(t)]^-\leq e^{-t}[ R(f_\mathrm{in}\circ\Phi_{-t})]^-\\
+(1-e^{-t})\int_0^t\int_V \frac{e^{-(t-s)}}{1-e^{-t}}[ R(f(s)) (M+v\cdot q(s,\cdot))]^-\circ\Phi_{-(t-s)}ds d\nu.
\end{multline*}
Using \eqref{ModifiedEqPos} and $f_\mathrm{in}\geq 0$, we obtain
\begin{equation}\label{intphiminus}
[ R(f)(t)]^-\leq \int_0^t\int_V e^{-(t-s)}[ R(f(s))^- (M+v\cdot q(s,\cdot))]\circ\Phi_{-(t-s)}ds d\nu
\end{equation}
We integrate \eqref{intphiminus} over $x\in\T^d$. Since $\Phi_t$ is measure preserving, we obtain
$$
e^t\| R(f)^-\|_{L^1(\T^d)}(t)\leq\int_0^t (1+\|q(s)\|_{C(\T^d;\R^d)})e^{s}\| R(f)^-\|_{L^1(\T^d)}(s)ds.
$$
The Gr\"onwall Lemma implies $ R(f)^-=0$ a.e. on $(0,T)\times\T^d$.
\end{proof}

We will also need the following result about the regularity of solutions.

\begin{proposition}[Propagation of regularity] Let $f_\mathrm{in}\in L^1(\T^d\times V)$ satisfy
\begin{equation}\label{regfin}
f_\mathrm{in}\in L^2(\T^d\times V),\quad\partial_{x_i}f_\mathrm{in}\in L^2(\T^d\times V),
\end{equation} 
for $i\in\{1,\ldots,d\}$. Let $q\in L^1(0,T;C^1(\T^d;\R^d))$. Let $f$ be the unique mild solution to \eqref{eq:eps1} in $C([0,T];L^1(\T^d\times V))$ with initial datum $f_\mathrm{in}$. Then $f(t)$ and $\nabla_x f(t)\in L^2(\T^d\times V)$ for all $t\in[0,T]$, and we have the estimate
\begin{equation}\label{regfLB}
\|f(t)\|_{L^2(\T^d\times V)}^2+\sum_{i=1}^d\|\partial_{x_i}f(t)\|_{L^2(\T^d\times V)}^2 \leq D(T)\left[\|f_\mathrm{in}\|_{L^2(\T^d\times V)}^2+\sum_{i=1}^d\|\partial_{x_i}f_\mathrm{in}\|_{L^2(\T^d\times V)}^2\right],
\end{equation}
for all $t\in[0,T]$, where the constant $D(T)$ depends on $T$, on the constant $\alpha$ in \eqref{HypM} and on the norm $\|q\|_{L^1(0,T;C^1(\T^d;\R^d))}$.
\label{prop:regmildLB}\end{proposition}

\begin{proof}[Proof of Proposition~\ref{prop:regmildLB}] The mild solution $f$ to \eqref{eq:eps1} is obtained by a fixed-point argument. Therefore $f$ is the limit, in $C([0,T];L^1(\T^d\times V))$, of the iterative sequence $f^k$ defined by $f^0=f_\mathrm{in}$, $f^{k+1}$ solution to the equation
\begin{equation}\label{eq:k}
\partial_t f^{k+1}+v\cdot\nabla_x f^{k+1}= R(f^k) [M(v)+v\cdot q]-f^{k+1},
\end{equation}
with initial condition $f_\mathrm{in}$, in the sense that 
\begin{equation}\label{mildLBk}
f^{k+1}(t)=e^{-t}f_\mathrm{in}\circ\Phi_{-t}+\int_0^t e^{-(t-s)}[ R(f^k(s)) (M+v\cdot q(s,\cdot))]\circ\Phi_{-(t-s)}ds,
\end{equation}
for all $t\in[0,T]$. On the basis of \eqref{mildLBk}, using \eqref{HypM}, we derive the $L^2$-bound
\begin{align*}
\|f^{k+1}(t)\|_{L^2(\T^d\times V)}&
\leq e^{-t}\|f_\mathrm{in}\|_{L^2(\T^d\times V)}
+\int_0^t e^{-(t-s)}\left(\alpha^{-1}+\|q(s)\|_{C(\T^d;\R^d)}\right)\| R(f^k(s))\|_{L^2(\T^d)}ds.
\end{align*}
Since $\| R(f)\|_{L^2(\T^d)}\leq\|f\|_{L^2(\T^d\times V)}$ by Jensen's Inequality, we obtain
$$
\varphi_{k+1}(t)\leq \|f_\mathrm{in}\|_{L^2(\T^d\times V)}+\int_0^t\left(\alpha^{-1}+\|q(s)\|_{C(\T^d;\R^d)}\right)\varphi_k(s)ds,\quad\varphi_k(t):=e^t\|f^{k}(t)\|_{L^2(\T^d\times V)}.
$$
We conclude that
$$
\|f^{k}(t)\|_{L^2(\T^d\times V)}\leq \exp\left[\int_0^t\left(\alpha^{-1}-1+\|q(s)\|_{C(\T^d;\R^d)}\right)ds\right]\|f_\mathrm{in}\|_{L^2(\T^d\times V)},
$$
which gives a similar bound for $f$ at the limit $k\to+\infty$. The bound on the derivatives with respect to $x$ of $f$ is obtained similarly on the basis of \eqref{mildLBk}, by differentiation and $L^2$-estimate as above. We conclude to \eqref{regfLB}.
\end{proof}

\subsection{Infinitesimal generator}\label{sec:subsecgenerator}

We emphasized the fact that the map \eqref{SpacesSolve} is continuous in the statement of Theorem~\ref{th:CYLB}, because this property is used in the proof of the Markov property for $(f_t,\mathtt{m}(t;m))_{t\geq 0}$. We will not give the details of the proof however; we refer to Theorem~4.5 in \cite{DebusscheVovelle20} instead.

\begin{theorem}[Markov property] Let $(\bar{\mathtt{m}}_t)$ be an admissible pilot process in the sense of Definition~\ref{def:admbarm}. Let $\mathcal{X}$ denote the state space
\begin{equation}\label{defXX}
\mathcal{X}=L^1(\T^d\times V)\times\mathtt{F}.
\end{equation}
For $(f,m)\in\mathcal{X}$, let $f_t$ denote the mild solution to \eqref{eq:eps1} with initial datum $f$ and forcing $q_t=\nabla_x \mathtt{m}(t;m)$. Then $(f_t,\mathtt{m}(t;m))_{t\geq 0}$ is a time-homogeneous Markov process over $\mathcal{X}$. It is locally bounded in the sense of Remark~\ref{rk:item-locbound} and it is stochastically continuous.
\label{th:MarkovPty}\end{theorem}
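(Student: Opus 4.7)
The plan is to reduce the Markov property for $(f_t,m_t(n))$ on $\mathcal{X}$ to the known Markov property of the driving process $(\bar m_t)$ on $F$, exploiting the deterministic, pathwise, continuous solution map provided by Theorem~\ref{th:CYLB}. The starting observation is that the first component $f_t$ is a purely deterministic functional of the initial datum $f$ and of the trajectory $(\nabla_x m_s(n))_{0\leq s\leq t}$: by \eqref{continuousflow} we have $f_t=\Psi_t(f,(\nabla_x m_s(n))_{0\leq s\leq t})$ with $\Psi_t$ continuous from $L^1(\T^d\times V)\times D([0,t];C(\T^d))$ to $L^1(\T^d\times V)$. Continuity ensures Borel measurability of the map $(f,(q_s))\mapsto\Psi_t(f,(q_s))$, which is what is required to define a transition kernel on $\mathcal{X}$.

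I would next establish the semigroup/cocycle identity. Using uniqueness of the mild solution (Theorem~\ref{th:CYLB}) and the measure-preserving property of $\Phi_t$, the solution $f_t$ can be restarted at any time $s\leq t$ from the state $f_s$ with driving term $(\nabla_x m_{s+r}(n))_{0\leq r\leq t-s}$, yielding
\begin{equation*}
f_t=\Psi_{t-s}\bigl(f_s,(\nabla_x m_{s+r}(n))_{0\leq r\leq t-s}\bigr).
\end{equation*}
Combined with the flow identity $m_{s+r}(n)=m_r(m_s(n))$ (which holds up to indistinguishability for the càdlàg stationary Markov process $(\bar m_t)$, in the sense given in Section~\ref{sec:driving} following Priola's framework), one obtains
\begin{equation*}
(f_t,m_t(n))=\bigl(\Psi_{t-s}(f_s,(\nabla_x m_r(m_s(n)))_{0\leq r\leq t-s}),\,m_{t-s}(m_s(n))\bigr),
\end{equation*}
i.e.\ the future after time $s$ depends on the past only through $(f_s,m_s(n))$.

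To conclude, I would define the semigroup $P_t\varphi(f,n)=\E\,\varphi\bigl(\Psi_t(f,(\nabla_x m_r(n))_{0\leq r\leq t}),m_t(n)\bigr)$ for bounded Borel $\varphi$ on $\mathcal{X}$, check that it is Markovian (measurability of $P_t\varphi$ in $(f,n)$ follows from the continuity of $\Psi_t$, the $F$-continuity of $n\mapsto m_\cdot(n)$ guaranteed by the mixing estimate \eqref{mixCoupled}, and dominated convergence), and verify the Chapman--Kolmogorov relation via the cocycle identity above together with \eqref{ChapKol}. Time-homogeneity is inherited from the stationarity of $(\bar m_t)$, which implies that the law of $(m_r(n))_{0\leq r\leq t-s}$ conditional on $\{m_s(n)=n'\}$ coincides with the law of $(m_r(n'))_{0\leq r\leq t-s}$.

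The main obstacle is not an estimate but the careful bookkeeping of measurability and null sets in the infinite-dimensional setting: checking that the cocycle identity for $(m_t)$ holds almost surely jointly in all the times involved, that $\Psi_t$ is a bona fide Borel map on the Skorohod path space, and that the resulting transition kernel is measurable on $\mathcal{X}$. These are precisely the technicalities carried out in full in \cite[Theorem~4.3]{DebusscheVovelle17}, to which the paper defers; what needs to be said here is simply that our setting differs only in the form of the pathwise flow $\Psi_t$, whose continuity property \eqref{continuousflow} is exactly the input required to transplant that proof verbatim.
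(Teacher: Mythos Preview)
Your proposal is correct and matches the paper's approach exactly: the paper does not give a proof of Theorem~\ref{th:MarkovPty} at all but simply refers to \cite[Theorem~4.3]{DebusscheVovelle17}, highlighting that the continuous solution map \eqref{continuousflow} is the key input---which is precisely the structure of your argument. Two minor remarks: time-homogeneity of the joint process follows from the assumed time-homogeneity of $(\bar m_t)$ rather than from its stationarity, and the mixing estimate \eqref{mixCoupled} concerns a coupling and does not by itself give pathwise continuity of $n\mapsto m_\cdot(n)$; measurability of $(f,n)\mapsto P_t\varphi(f,n)$ is instead obtained from the transition-function formalism, as in \cite{DebusscheVovelle17}.
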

\begin{proof}[Proof of Theorem~\ref{th:MarkovPty}] We will give only few details of the proof. The proof of the Markov property uses the description of $f_t$ as a continuous flow by means of \eqref{SpacesSolve}. We refer to Theorem~4.5 in \cite{DebusscheVovelle20}. The fact that $(f_t,\mathtt{m}_t)$ is locally bounded follows from \eqref{BallR} and the bound~\eqref{sol:fLB} which give: almost surely, for all $t\in [0,T]$,
\begin{equation}\label{boundthMarkovPty}
\|f_t\|_{L^1(\T^d\times V)}\leq e^{\mathtt{b}T}\|f_0\|_{L^1(\T^d\times V)}.
\end{equation} 
Since $(\mathtt{m}_t)$ is stochastically continuous and $(f_t)$ is almost surely continuous, the joint process $(f_t,\mathtt{m}_t)$ is stochastically continuous.
\end{proof}

We denote by $(\mathscr{P}_t)$ the semi-group associated to $(f_t,\mathtt{m}_t)_{t\geq 0}$:
\begin{equation}\label{defPt}
\mathscr{P}_t\varphi(f,m):=\E\varphi(f_t,\mathtt{m}(t;m)),\quad\varphi\in \mathrm{BM}(\mathcal{X}).
\end{equation}
Coming back to the case $\eps>0$ (instead of $\eps=1$), we obtain that, representing by $f^\eps_t$ the mild solution to \eqref{eq:1}, the process $(f^\eps_t,\bar{\mathtt{m}}_{\eps^{-2}t})$ is a Markov process. We will denote by $(\mathscr{P}^\eps_t)$ the corresponding semi-group. Formally, the infinitesimal generator $\LL^\eps$ associated to $(\mathscr{P}^\eps_t)$ is given as
$
\LL^\eps=\eps^{-2}\LL_\sharp+\eps^{-1}\LL_\flat,
$
where $\LL_\sharp$ and $\LL_\flat$ are defined by
\begin{align}
\LL_\sharp\varphi(f,m)=&\mathtt{A}\varphi(f,m)+(M\rho-f+ R(f)v\cdot\nabla_x m,D_f\varphi(f,m)),\label{Lsharp}
\end{align}
and
\begin{align}
\LL_\flat\varphi(f,m)=&-(v\cdot\nabla_x f,D_f\varphi(f,m)),\label{Lflat}
\end{align}
for $(f,m)\in\mathcal{X}$. 
In what follows, we study the main generator $\LL_\sharp$ (Section~\ref{sec:main-generator}). Then, in Section~\ref{sec:subsecgenerator2}, we also specify the meaning of $\LL^\flat$ and we establish that a certain class of functions is in the domain of $\LL^\eps$. These results will be applied then to construct some adequate perturbed test functions in Section~\ref{sec:perturbed}.

\subsection{Main generator}\label{sec:main-generator}

The main generator $\LL_\sharp$, defined in \eqref{Lsharp}, is associated to the following equation (started at $t=t_0$)
\begin{equation}
\begin{cases}\label{sode}
\frac{d}{dt}g_t = R(g_t)M-g_t +  R(g_t) v\cdot \nabla_x \mathtt{m}(t,t_0;m),\\
g_{t_0} = g.
\end{cases}
\end{equation}
A solution to \eqref{sode} satisfies (formally in a first step) $ R(g_t)= R(g)$. Therefore \eqref{sode} is a simple dissipative equation on $g_t$, with source term. The explicit solution to \eqref{sode} reads
\begin{equation}\label{explicitgt}
g_{t_0,t} = e^{-\left(t-t_0\right)}g+ R(g)M\left(1-e^{-(t-t_0)}\right)+ R(g)v\cdot\nabla_x\int_{t_0}^{t}e^{-(t-s)}\mathtt{m}(s,t_0;m) ds.
\end{equation}

The perturbed test-function method set out in Section~\ref{sec:perturbed} involves the resolution of the Poisson equation for $\LL_\sharp$. In Appendix~\ref{sec:app5}, we give the details of the resolution of the Poisson equation for a certain class of Markov processes. To check that the process $(g_t,\mathtt{m}_t)$ belongs to this class of processes, and also to facilitate the definition of the correctors in Section~\ref{sec:perturbed}, it will be easier to consider the following extended process $(g_t,\mathtt{w}_t,\mathtt{m}_t)$, described by the extended semi-group $(\tilde{P}^\sharp_t)$ given as follows: for $(g,w,m)$ in the state space 
\begin{equation}\label{defXtilde}	
\tilde{\mathcal{X}}=L^1(\T^d\times V)\times\mathtt{F}\times\mathtt{F},
\end{equation} 
set
\begin{equation}\label{gwm}
\mathtt{w}_t=e^{-t}w+\int_{0}^{t}e^{-(t-s)}\mathtt{m}(s,0;m)ds,\quad g_t=e^{-t}g+(1-e^{-t})R(g)M+R(g) v\cdot \nabla_x(\mathtt{w}_t-e^{-t}w),
\end{equation}
and $\tilde{P}^\sharp_t\phi(g,w,m)=\E\phi(g_t,\mathtt{w}_t,\mathtt{m}(t;m))$.
Let $\rho$ be an arbitrary element of $L^1(\T^d)$. Since $R(g_t)=R(g)$, the Markov process $(g_t,\mathtt{w}_t,\mathtt{m}_t)$ can be restricted to the state space
\begin{equation}\label{Xrho}
\tilde{\mathcal{X}}_\rho=\left\{g\in L^1(\T^d\times V);R(g)=\rho\right\}\times\mathtt{F}\times\mathtt{F}.
\end{equation}
Since $\mathtt{F}$ is convex by hypothesis, we have indeed $\mathtt{w}_t\in\mathtt{F}$ when $w\in\mathtt{F}$. On $\tilde{\mathcal{X}}$, we consider the norm given by
\[
\|(g,w,m)\|_{\tilde{\mathcal{X}}}=\|g\|_{L^1(\T^d\times V)}+\|w\|_F+\|m\|_F.
\]
In the following proposition, we use the terminology of Appendix~\ref{sec:app5}.
\begin{proposition} Let $\rho\in L^1(\T^d)$. The Markov process $(g_t,\mathtt{w}_t,\mathtt{m}_t)$ with state space $\tilde{\mathcal{X}}_\rho$ is locally bounded uniformly in time. It has the property of stochastic continuity described in Item~\ref{item-stocont} of Section~\ref{sec:app3}. It has a unique invariant measure $\tilde{\mu}_\rho$ which has a bounded support and satisfies a mixing hypothesis as in \eqref{coupling-appendix}.
\label{prop:prePoissonqwm}\end{proposition}

\begin{proof}[Proof of Proposition~\ref{prop:prePoissonqwm}] By \eqref{BallR} and \eqref{gwm} and some trivial estimates, we have, $\PP$-almost surely,
\[
\|(g_t,\mathtt{w}_t,\mathtt{m}_t)\|_{\tilde{\mathcal{X}}}\leq \|g\|_{L^1(\T^d\times V)}+\|\rho\|_{L^1(\T^d)}(1+\mathtt{b})+2\mathtt{b}.
\]
This shows the local bound, uniformly in time. Since $(g_t,\mathtt{w}_t)$ is almost surely continuous and $(\mathtt{m}_t)$ is stochastically continuous, the process $(g_t,\mathtt{w}_t,\mathtt{m}_t)$ is stochastically continuous. The associated semi-group $(\tilde{P}^\sharp_t)$ has therefore the desired stochastic continuity property. Recall that $\bar{M}_t$ and $\bar{\mathtt{w}}_t$ are given in \eqref{def:barMt}. We define the measure $\tilde{\mu}_\rho$ as the law of the process $(\rho\bar{M}_t,\bar{\mathtt{w}}_t,\bar{\mathtt{m}}_t)$:
\begin{equation}\label{deftildemu}
\dual{\tilde{\mu}_\rho}{\Phi}=\E\Phi(\rho\bar{M}_t,\bar{\mathtt{w}}_t,\bar{\mathtt{m}}_t).
\end{equation}
Since $(\bar{\mathtt{m}}_t)$ is stationary, this definition is independent of $t$. The trivial bound $\|(\rho\bar{M}_t,\bar{\mathtt{w}}_t,\bar{\mathtt{m}}_t)\|_{\tilde{\mathcal{X}}_\rho}\leq 1+3\mathtt{b}$ shows that the measure $\tilde{\mu}_\rho$ has a bounded support. When the starting point is $(g,w,m)=(\rho\bar{M}_0,\bar{\mathtt{w}}_0,\bar{\mathtt{m}}_0)$, we have, for $s\geq 0$, $\mathtt{m}(s,0;\bar{\mathtt{m}}_0)=\bar{\mathtt{m}}_s$ almost surely and we compute, using \eqref{gwm}, $\mathtt{w}_t=\bar{\mathtt{w}}_t$, $g_t=\rho\bar{M}_t$. Therefore $\tilde{\mu}_\rho$ is an invariant measure. The fact that it is unique and ergodic will follow from the mixing estimate that we will now establish. Let $(g,w,m)$ and $(g^\prime,w^\prime,m^\prime)$ be given in $\tilde{\mathcal{X}}_\rho$. Let $(\hat{\mathtt{m}}_t,\hat{\mathtt{m}}^\prime_t)$ be a coupling of $(\mathtt{m}(t;m),\mathtt{m}(t;m^\prime))$ such that $\E\|\hat{\mathtt{m}}_t-\hat{\mathtt{m}}^\prime_t\|_F\leq\mathtt{b}\gamma_\mathrm{mix}(t)$ (the coupling is given by Hypothesis~\ref{mixCoupled}). We use the definition~\eqref{gwm} with ``hats'' and ``hats and primes'' to define the processes $\hat{X}_t:=(\hat{g}_t,\hat{\mathtt{w}}_t,\hat{\mathtt{m}}_t)$ and $\hat{X}_t^\prime:=(\hat{g}_t^\prime,\hat{\mathtt{w}}_t^\prime,\hat{\mathtt{m}}_t^\prime)$. Let us set
\[
\gamma^*_\mathrm{mix}(t)=\int_0^t e^{-(t-s)}\gamma_\mathrm{mix}(s)ds.
\]
Some simple estimates then give
\begin{equation}\label{coupling-diff0}
\|\hat{X}_t\|_{\tilde{\mathcal{X}}}\leq C,\quad 
\|\hat{X}_t^\prime\|_{\tilde{\mathcal{X}}}\leq C,\quad
\E\left[\|\hat{X}_t-\hat{X}_t^\prime\|_{\tilde{\mathcal{X}}}\right]\leq C\Gamma_\mathrm{mix}(t),
\end{equation}
where $\Gamma_\mathrm{mix}(t)=e^{-t}+\gamma^*_\mathrm{mix}(t)+\gamma_\mathrm{mix}(t)$ is a continuous function in $L^1(\R_+)$. The constant $C$ in \eqref{coupling-diff0} depends on the size of the data in $\tilde{\mathcal{X}}$. Indeed, by inspection in \eqref{gwm} we obtain the first two bounds in \eqref{coupling-diff0} and also the last estimate in \eqref{coupling-diff0} (which corresponds to  \eqref{coupling-appendix}), where the constant $C$ depends on $\|g\|_{L^1(\T^d\times V)}$, $\|g^\prime\|_{L^1(\T^d\times V)}$ and $\mathtt{b}$. 
\end{proof}

\begin{corollary} Let $\varphi\colon\tilde{\mathcal{X}}\to\R$ be Lipschitz continuous on bounded sets. Then the function
\begin{equation}\label{GammaMixsharpLipf}
\psi(g,w,m):=-\int_0^\infty (\tilde{P}^\sharp_t\varphi(g,w,m)-\dual{\tilde{\mu}_{R(g)}}{\varphi}) dt,\quad (g,w,m)\in\tilde{\mathcal{X}},
\end{equation}
is Lipschitz continuous on bounded sets with respect to the variable $g$. 
\label{cor:prop:prePoissonqwm}\end{corollary}

\begin{proof}[Proof of Corollary~\ref{cor:prop:prePoissonqwm}] We use the notations of the proof of Proposition~\ref{prop:prePoissonqwm}. Let us fix $X=(g,w,m)\in\tilde{\mathcal{X}}$ and let us take $X^\prime=(g^\prime,w,m)$. We have then
\[
\psi(g,w,m)-\psi(g^\prime,w,m)=-\int_0^\infty\E\left[\varphi(\hat{X}_t)-\varphi(\hat{X}_t^\prime)\right]dt.
\]
We use \eqref{coupling-diff0} and the fact that $\varphi$ is Lipschitz continuous on bounded sets of $\tilde{\mathcal{X}}$ to get an estimate
\[
|\psi(g,w,m)-\psi(g^\prime,w,m)|\leq C\|g-g^\prime\|_{L^1(\T^d\times V)},
\]
where the constant $C$ depends on $r:=\|(g,w,m)\|_{\tilde{\mathcal{X}}}+\|g^\prime\|_{L^1(\T^d\times V)}$, on the Lipschitz constant of $\varphi$ on the ball $\bar{B}(0,r)$ of $\tilde{\mathcal{X}}$ and on the $L^1(\R_+)$-norm of $\Gamma_\mathrm{mix}$.
\end{proof}
\subsection{Infinitesimal generator: some elements in the domain}\label{sec:subsecgenerator2}

In the following proposition, we use the results on the main generator given in the previous Section~\ref{sec:main-generator}. For $\rho\in L^1(\T^d)$, the state space $\tilde{\mathcal{X}}_\rho$ is defined by \eqref{Xrho}. The state spaces $\mathcal{X}$, $\tilde{\mathcal{X}}$ are defined by \eqref{defXX}, \eqref{defXtilde} respectively. We also denote by $\mathcal{X}_\rho$ the image of $\tilde{\mathcal{X}}_\rho$ by the projection on the first and third coordinates:
\[
\mathcal{X}_\rho\subset\mathcal{X},\quad \mathcal{X}_\rho=\left\{g\in L^1(\T^d\times V);R(g)=\rho\right\}\times\mathtt{F}.
\]
We will also slightly abuse notations by considering any function $\Phi$ of the variables $(g,m)$ as a function of $(g,w,m)$. This is the trivial way to lift functions defined on $\mathcal{X}$ to functions defined on $\tilde{\mathcal{X}}$. One can check on \eqref{gwm} that $\tilde{P}^\sharp_t\Phi$ is then also a function independent of $w$. We have studied the generator $\LL^\sharp$ in Section~\ref{sec:main-generator}. The generator $\LL^\flat$ is understood as the infinitesimal generator, according to the definition of the appendix~\ref{sec:app2}, associated to the semi group $(P^\flat_t)$ given by
\begin{equation}\label{Pflat}
P^\flat_t\varphi(f,m)=\varphi(f\circ\Phi_{-t},m),\quad\Phi_t(x,v)=(x+tv,v).
\end{equation}

In the following proposition, we will exhibit some functions solutions to the Poisson equation $\LL^\sharp\psi(f,m)=\varphi(f,m)-\dual{\mu_{R(f)}}{\varphi}$. The test functions $\varphi\colon\mathcal{X}\to\R$ will be restricted to the following class of \emph{admissible} and \emph{good test functions}.

\begin{definition}[Admissible and good test functions] Let $\varphi\colon\mathcal{X}\to\R$ be Lipschitz continuous on bounded sets. We say that $\varphi$ is a \emph{good test function} if $\varphi$ is differentiable with respect to $f$ at all points, with $D_f\varphi$ Lipschitz continuous on bounded sets, and if the following regularizing conditions are satisfied: for all $i=1,\dotsc,d$, for all $g\in C^1(\T^d;L^1(V))$, for all $(f,m)\in\mathcal{X}$,
\begin{equation}\label{regxvarphi}
|(\partial_{x_i}g,D_f\varphi(f,m))|\leq C_1(\|(f,m)\|_{\mathcal{X}})\|g\|_{L^1(\T^d\times V)},
\end{equation}
where $C_1$ is locally bounded on $\R_+$, and the following additional Lipschitz dependence in \eqref{regxvarphi} is satisfied:
\begin{multline}\label{regxvarphiLip}
|(\partial_{x_i}g,D_f\varphi(f,m)-D_f\varphi(f^\prime,m^\prime))|\\
\leq C_1(\|(f,m)\|_{\mathcal{X}}+\|(f^\prime,m^\prime)\|_{\mathcal{X}})\|g\|_{L^1(\T^d\times V)}
\|(f,m)-(f^\prime,m^\prime)\|_{\mathcal{X}},
\end{multline}
for every $(f^\prime,m^\prime)\in\mathcal{X}$. If, save for the other properties, \eqref{regxvarphiLip} is satisfied under the restriction that $m=m^\prime$ (the Lipschitz dependence in \eqref{regxvarphi} being satisfied with respect to $f$ only hence), we say that $\varphi$ is an \emph{admissible test function}.
\label{def:ADMtestfunctions}\end{definition}

\begin{proposition} Let $(\bar{\mathtt{m}}_t)$ be an admissible pilot process in the sense of Definition~\ref{def:admbarm}. Let $(\tilde{P}^\sharp_t)$, \textit{resp.} $(P^\flat_t)$, be the semi-group defined in Section~\ref{sec:main-generator}, \textit{resp.} by \eqref{Pflat}. Let $\varphi\colon\mathcal{X}\to\R$ be either an admissible or good test-function. We set
\begin{equation}\label{correctorphipsi}
\psi(f,m)=-\int_0^\infty\left[ \tilde{P}^\sharp_t\varphi(f,m)-\dual{\tilde{\mu}_{R(f)}}{\varphi}\right]dt,\quad (f,m)\in\mathcal{X}.
\end{equation}
We have then the following results:
\begin{enumerate}
\item\label{item-varphiLflat} if $\varphi$ is an admissible test function, then $\varphi$ belongs to the domain of $\LL^\flat$, and $\LL^\flat\varphi$ is Lipschitz continuous on bounded sets,
\item\label{item-varphipsiOK} if $\varphi$ is a good test function, then the function $\psi$ is well defined and is an admissible test function,
\item\label{item-psiLL} if $\varphi$ is a good test function, then $\psi$ is in the intersection of the three domains $D(\LL^\sharp)$, $D(\LL^\flat)$, $D(\LL^\eps)$ and satisfies the Poisson equation $\LL^\sharp\psi=\varphi$ in $\mathcal{X}$.
\end{enumerate}
\label{prop:admtest1}\end{proposition}

\begin{proof}[Proof of Proposition~\ref{prop:admtest1}] We first assume that $\varphi$ is an admissible test function. We want to show that $\varphi\in D(\LL^\flat)$. Although there is no more than a standard chain-rule for differentiation behind this result, we will justify it carefully, in the framework for infinitesimal generator based on $\pi$-convergence (with extension to functions bounded on bounded sets for locally bounded Markov processes), as described in Appendix~\ref{sec:app2}. First, by integration, \eqref{regxvarphi} gives the estimates
\begin{equation}\label{regxvarphi-int}
|t^{-1}(g\circ\Phi_{-t}-g,D_f\varphi(f,m))|\leq C_1(\|(f,m)\|_{\mathcal{X}})\|g\|_{L^1(\T^d\times V)},
\end{equation}
and
\begin{equation}\label{regxvarphi-int2}
|(t^{-1}[g\circ\Phi_{-t}-g]-v\cdot\nabla_x g,D_f\varphi(f,m))|\leq C_1(\|(f,m)\|_{\mathcal{X}})\sup_{0\leq\sigma\leq t}\|g\circ\Phi_\sigma-g\|_{L^1(\T^d\times V)}.
\end{equation}
To establish the second estimate \eqref{regxvarphi-int2} for instance, we write
\[
t^{-1}[g\circ\Phi_{-t}-g]-v\cdot\nabla_x g=\int_0^1 v\cdot\nabla_x(g\circ\Phi_{-st}-g)ds
\]
and apply \eqref{regxvarphi} to $v_i(g\circ\Phi_{st}-g)$. Integration in \eqref{regxvarphiLip} also leads to the estimate
\begin{multline}\label{regxvarphiLip-int}
|(t^{-1}[g\circ\Phi_{-t}-g],D_f\varphi(f,m)-D_f\varphi(f^\prime,m^\prime))|\\
\leq C_1(\|(f,m)\|_{\mathcal{X}}+\|(f^\prime,m^\prime)\|_{\mathcal{X}})\|g\|_{L^1(\T^d\times V)}
\|(f,m)-(f^\prime,m^\prime)\|_{\mathcal{X}},
\end{multline}
All five estimates \eqref{regxvarphi}, \eqref{regxvarphiLip}, \eqref{regxvarphi-int}, \eqref{regxvarphi-int2}, \eqref{regxvarphiLip-int} can be extended by density to functions $g\in L^1(\T^d\times V)$. We combine these estimates with the expansion
\[
\varphi(f+th,m)-\varphi(f,m)=t\int_0^1 (h,D_f\varphi(f+sth,m)ds
\]
where $h:=t^{-1}[f\circ\Phi_{-t}-f]$ to get the desired properties of the expansion $\varphi(f\circ\Phi_{-t},m)-\varphi(f,m)$. Indeed, we obtain the chain rule
$\varphi(f\circ\Phi_{-t},m)-\varphi(f,m)=t(v\cdot\nabla_x f,D_f\varphi(f,m))
+t\eta_t(f,m)$, where
\begin{multline}\label{chainruleCorrector}
\eta_t(f,m)=(t^{-1}[f\circ\Phi_{-t}-f]-v\cdot\nabla_x f,D_f\varphi(f,m))\\
+\int_0^1 (t^{-1}[f\circ\Phi_{-t}-f],D_f\varphi(f+s(f\circ\Phi_{-t}-f),m)-D_f\varphi(f,m))ds.
\end{multline}
Using \eqref{regxvarphi-int2} and \eqref{regxvarphiLip-int}, we see that $\eta_t$ is bounded on bounded sets and satisfies the pointwise convergence $\eta_t(f,m)\to 0$ when $t\to 0$, for every $(f,m)\in\mathcal{X}$. Additionally, the Lipschitz property \eqref{regxvarphiLip} is inherited by $\LL^\flat\varphi$, which is Lipschitz continuous on bounded sets. 
To establish Item~\ref{item-varphipsiOK}, we set (\textit{cf.} \eqref{gwm})
\[
G_t[m](f)=e^{-t}f+(1-e^{-t})R(f)M+R(f) v\cdot \nabla_x\int_0^t e^{-(t-s)}\mathtt{m}(s;m)ds
\]
and we denote by
\[
\Theta_t(f,m)= \tilde{P}^\sharp_t\varphi(f,m)-\dual{\tilde{\mu}_{R(f)}}{\varphi}=\E\left[\varphi(G_t[m](f),\mathtt{m}(t;m))-\varphi(R(f)\bar{M}_0,\bar{\mathtt{m}}_0)\right]
\]
the integrand in \eqref{correctorphipsi}. For each $t\geq 0$, $\Theta_t$ is differentiable with respect to $f$, the expression of $D_f \Theta_t(f,m)$ being given by
\begin{equation}\label{dTheta}
(g,D_f \Theta_t(f,m))=\E\left[(G_t[m](g),D_f\varphi(G_t[m](f),\mathtt{m}(t;m)))-(R(g)\bar{M}_0,D_f\varphi(R(f)\bar{M}_0,\bar{\mathtt{m}}_0))\right].
\end{equation}
We can write \eqref{dTheta} under the form
\begin{equation}\label{dTheta2}
(g,D_f \Theta_t(f,m))=\tilde{\tilde{P}}^\sharp_t (g,D_f\varphi(f,m))-\dual{\tilde{\tilde{\mu}}_{R(g),R(f)}}{(g,D_f\varphi(f,m))},
\end{equation}
where $\tilde{\tilde{P}}^\sharp_t$, \textit{resp.} $\tilde{\tilde{\mu}}_{R(g),R(f)}$, denote the extended semi-group
\[
\tilde{\tilde{P}}^\sharp_t\varphi(g,f,w,m)=\E\left[\varphi(G_t[m](g),G_t[m](f),\mathtt{w}_t,\mathtt{m}(t;m))\right],
\]
\textit{resp.} invariant measure $\dual{\tilde{\tilde{\mu}}_{\rho_1,\rho_2}}{\varphi}=\E\left[\varphi(\rho_1\bar{M}_0,\rho_2\bar{M}_0,\bar{\mathtt{w}}_0,\bar{\mathtt{m}}_0)\right]$,
with $\mathtt{w}_t$ defined in \eqref{gwm}. The state space of this extended process is then
\[
\tilde{\tilde{\mathcal{X}}}=L^1(\T^d\times V)\times\tilde{\mathcal{X}},\quad\|(g,f,w,m))\|_{\tilde{\tilde{\mathcal{X}}}}=\|g\|_{L^1(\T^d\times V)}+\|(f,w,m)\|_{\tilde{\mathcal{X}}}.
\]
It is simple to extend Proposition~\ref{prop:prePoissonqwm} and Corollary~\ref{cor:prop:prePoissonqwm} to show at once that $\psi$ is differentiable with respect to $f$ and satisfies the properties \eqref{regxvarphi} and \eqref{regxvarphiLip} in restriction to $m=m^\prime$.
We include some details of the proof of the property \eqref{regxvarphi} for $\psi$ for completeness. For some given set of data $X:=(f,w,m)$ and $X^\prime:=(f^\prime,w^\prime,m^\prime)$ in $\tilde{\mathcal{X}}$, we denote by $(X_t)$ and $(X^\prime_t)$ the associated trajectories. As in the proof of Proposition~\ref{prop:prePoissonqwm}, we construct a coupling $(\hat{X}_t,\hat{X}_t^\prime)_{t\geq 0}$ of $(X_t,X_t^\prime)_{t\geq 0}$ which satisfies, $\PP$-almost surely, for every $t\geq 0$, the estimates \eqref{coupling-diff0}. We also set 
\begin{equation}\label{hatgt}
\hat{g}_t=e^{-t}g+(1-e^{-t})R(g)M+R(g) v\cdot \nabla_x\int_0^t e^{-(t-s)}\hat{\mathtt{m}}_sds.
\end{equation}
We fix first $X=(f,w,m)$, and then choose $X^\prime=(f^\prime,w^\prime,m^\prime)$ according to the law $\tilde{\mu}_{R(f)}$. We consider then \eqref{dTheta}, the argument being $\partial_{x_i}g$ instead of $g$, and decompose the difference in the right-hand side of \eqref{dTheta} into the sum of four terms. The first term is
\begin{multline}\label{dThetaDiff1}
\E\left[(\partial_{x_i} [G_t[m](g)],D_f\varphi(G_t[m](f),\mathtt{m}(t;m))-D_f\varphi(R(f)\bar{M}_0,\bar{\mathtt{m}}_0))\right]\\
=\E\int_{\tilde{\mathcal{X}}}\left[(\partial_{x_i} \hat{g}_t,D_f\varphi(\hat{f}_t,\hat{\mathtt{m}}_t)-D_f\varphi(\hat{f}^\prime_t,\hat{\mathtt{m}}_t^\prime))\right]d\tilde{\mu}_{R(f)}(X^\prime).
\end{multline}
In what follows, let us denote by $C$ an arbitrary constant that depends uniquely on the size of $X$ in $\tilde{\mathcal{X}}$ (it does not depend of $g$ in particular). By \eqref{regxvarphiLip}, we can estimate \eqref{dThetaDiff1} from above by
\begin{equation}\label{dThetaDiff1b}
C\E\int_{\tilde{\mathcal{X}}}\left[\|\hat{g}_t\|_{L^1(\T^d\times V)}\|\hat{X}_t-\hat{X}_t^\prime\|_{\tilde{\mathcal{X}}}\right]d\tilde{\mu}_{R(f)}(X^\prime).
\end{equation}
Regarding the dependence in $g$,  we have the bound $\|\hat{g}_t\|_{L^1(\T^d\times V)}
\leq (1+\mathtt{b})\|g\|_{L^1(\T^d\times V)}$, which follows directly from \eqref{hatgt}. We deduce that \eqref{dThetaDiff1} is bounded by $C\|g\|_{L^1(\T^d\times V)}\Gamma_\mathrm{mix}(t)$. The second term that we have to consider involves the commutators of $G_t[m]$ and $\partial_{x_i}$. It reads
\begin{equation}\label{dThetaDiff2}
\E\left[(\{G_t[m],\partial_{x_i}\}g,D_f\varphi(G_t[m](f),\mathtt{m}(t;m))-D_f\varphi(R(f)\bar{M}_0,\bar{\mathtt{m}}_0))\right].
\end{equation}
We have
\[
\{G_t[m],\partial_{x_i}\}g=-gv\cdot\nabla_x\partial_{x_i}\int_0^t e^{-(t-s)}\mathtt{m}(s;m)ds,
\]
hence $\|\{G_t[m],\partial_{x_i}\}g\|_{L^1(\T^d\times V)}\leq \mathtt{b}\|g\|_{L^1(\T^d\times V)}$.
We use the fact that $D_f\varphi$ is Lipschitz continuous on bounded sets and the estimates in \eqref{coupling-diff0} to get the same bound as for  \eqref{dThetaDiff1} on the term \eqref{dThetaDiff2}. The third term in our decomposition is
\begin{equation}\label{dThetaDiff3}
\E\left[(\partial_{x_i} [R(g)\bar{M}_0],D_f\varphi(G_t[m](f),\mathtt{m}(t;m))-D_f\varphi(R(f)\bar{M}_0,\bar{\mathtt{m}}_0))\right].
\end{equation}
It can be estimated (in a slightly simpler way) like \eqref{dThetaDiff1}. The fourth and last term 
\begin{equation}\label{dThetaDiff4}
-\E\left[(R(g)\partial_{x_i} \bar{M}_0,D_f\varphi(G_t[m](f),\mathtt{m}(t;m))-D_f\varphi(R(f)\bar{M}_0,\bar{\mathtt{m}}_0))\right]
\end{equation}
also satisfies the same estimate. Integrating in time now  the inequality $(\partial_{x_i }g,D_f \Theta_t(f,m))\leq C\|g\|_{L^1(\T^d\times V)}\Gamma_\mathrm{mix}(t)$, we obtain \eqref{regxvarphi} for the function $\psi$. At this stage, we can apply the first result of our proposition to $\psi$, to deduce that $\psi\in D(\LL^b)$. To prove that $\psi\in D(\LL^\sharp)$, we fix $(f,m)\in\mathcal{X}$. We want to look at the difference $\tilde{P}^\sharp_t\psi(f,m)-\psi(f,m)$. Since $G_t[m](f)$ has the moment $R(G_t[m](f))$ independent of $t$, we can restrict ourselves to the space $\mathcal{X}_\rho$, where $\rho:=R(f)$. By Proposition~\ref{prop:prePoissonqwm} and Proposition~\ref{prop:Poisson}, we obtain the pointwise limit
\[
\lim_{t\to 0}\frac{1}{t}\left(\tilde{P}^\sharp_t\psi(f,m)-\psi(f,m)\right)=\varphi(f,m).
\]
More precisely, we have (\textit{cf.} \eqref{invPoisson3})
\begin{equation}\label{incrementpsi-generator}
\frac{1}{t}\left(\tilde{P}^\sharp_t\psi(f,m)-\psi(f,m)\right)=\int_0^1\tilde{P}^\sharp_{ts}\varphi(f,m)ds.
\end{equation}
We see on this expression that the increments are additionally bounded on bounded sets. We deduce that
$\psi\in D(\LL^\sharp)$ and that the Poisson equation $\LL^\sharp\psi=\varphi$ is satisfied. 
Once we have shown that $\psi$ is in both $D(\LL^\flat)$ and $D(\LL^\sharp)$, it is expected that 
\begin{equation}\label{psi-in-DLL}
\psi\in D(\LL^\eps),\quad\LL^\eps\psi=\frac{1}{\eps^2}\varphi+\frac{1}{\eps}\LL^\flat\psi. 
\end{equation}
Let us assume that $\eps=1$ for simplicity. We first observe that the commutator between $P^\sharp_t$ and $P^\flat_s$ satisfies
\begin{equation}\label{CommPt}
|\{P^\sharp_t,P^\flat_s\}\psi(f,m)|\leq C(\|(f,m)\|_\mathcal{X},\psi) t(s+\omega(f,s)),
\end{equation}
where $\omega(f,s)$ denote the modulus of continuity
\[
\omega(f,s)=\sup_{|y|\leq s}\iint_{\T^d\times V}|f(x+y,v)-f(x,v)|dxdv,
\]
and where $C(\|(f,m)\|_\mathcal{X},\psi)$ depends on $\|(f,m)\|_\mathcal{X}$ and on the Lipschitz bound of $\psi$ on a bounded set of $\mathcal{X}$ whose size is determined by $\|(f,m)\|_\mathcal{X}$. Indeed, we have 
\begin{align}
|\{P^\sharp_t,P^\flat_s\}\psi(f,m)|
&=\left|\E\left[\psi(G_t[m](f)\circ\Phi_{-s},m)-\psi(G_t[m](f\circ\Phi_{-s}),m)\right]\right|\nonumber\\
&\leq C(\|(f,m)\|_\mathcal{X},\psi)\|G_t[m](f)\circ\Phi_{-s}-G_t[m](f\circ\Phi_{-s})\|_{L^1(\T^d\times V)}.\label{commutatorGPhi}
\end{align}
The commutator $\{G_t[m],\Phi_{-s}\}f:=G_t[m](f)\circ\Phi_{-s}-G_t[m](f\circ\Phi_{-s})$ that appears in \eqref{commutatorGPhi} is
\begin{multline}\label{commutatorGPhi2}
\left[(1-e^{-t})M+v\cdot\nabla_x\int_0^t e^{-(t-r)}\mathtt{m}(r;m)dr\right](R(f\circ\Phi_{-s})-R(f)\circ\Phi_{-s})\\
+R(f)\circ\Phi_{-s}v\cdot\nabla_x\int_0^t e^{-(t-r)}(\mathtt{m}(r;m)-\mathtt{m}(r;m)\circ\Phi_{-s})dr.
\end{multline}
In \eqref{commutatorGPhi2}, the $L^1(\T^d\times V)$-norm of $R(f\circ\Phi_{-s})-R(f)\circ\Phi_{-s}$ is bounded by $\omega(f,s)$ and $\mathtt{m}(r;m)$ is uniformly bounded in a space of functions of class $C^1$. From those two facts, we deduce that the norm of $\{G_t[m],\Phi_{-s}\}f$ is bounded by $C(\|(f,m)\|_\mathcal{X},\psi) t(s+\omega(f,s))$. Taking \eqref{commutatorGPhi} into account, \eqref{CommPt} follows. For similar reasons, we have 
\begin{equation}\label{CommPt2}
|(\mathscr{P}_t-P^\sharp_tP^\flat_t)\psi(f,m)|\leq C(\|(f,m)\|_\mathcal{X},\psi) t(t+\omega(f,t)).
\end{equation}
As a consequence of \eqref{CommPt} and \eqref{CommPt2}, we obtain the decomposition
\begin{equation}\label{Pflatsharp}
\mathscr{P}_t\psi(f,m)-\psi(f,m)=P_t^\flat\circ P_t^\sharp\psi(f,m)-\psi(f,m)+t\eta_t(f,m),
\end{equation}
where the remainder $\eta_t$ is $\pi$-converging to $0$. Once \eqref{Pflatsharp} is established, we use the fact that $\psi\in D(\LL^\flat)\cap D(\LL^\sharp)$ to write successively
\begin{align}
\mathscr{P}_t\psi(f,m)-\psi(f,m)
&=P_t^\flat\circ P_t^\sharp\psi(f,m)-\psi(f,m)+t\eta_t(f,m)\nonumber\\
&=P_t^\flat\left[P_t^\sharp\psi(f,m)-\psi(f,m)\right]+P_t^\flat\psi(f,m)-\psi(f,m)+t\eta_t(f,m)\label{decPflatsharp}.
\end{align}
The three last terms in \eqref{decPflatsharp} give a contribution $t(\LL^\flat\psi(f,m)+\eta^\prime_t(f,m))$, where the remainder $\eta^\prime_t$ is $\pi$-converging to $0$. The first term in \eqref{decPflatsharp} is
\begin{equation}\label{decPflatsharp1}
t P_t^\flat\int_0^1 P_{st}^\sharp\LL^\sharp\psi(f,m)ds=t P_t^\flat\int_0^1 P_{st}^\sharp\varphi(f,m)ds=t \int_0^1 P_t^\flat P_{st}^\sharp\varphi(f,m)ds.
\end{equation}
By the local Lipschitz property of $\varphi$, \eqref{decPflatsharp1}  has the asymptotic expansion $t\varphi(f,m)+t\eta_t''(f,m)$, where the remainder $\eta_t''$ is $\pi$-converging to $0$. We can therefore conclude, finally, to \eqref{psi-in-DLL}.
\end{proof}

We complete Proposition~\ref{prop:admtest1} with a computation of the function $\psi$ given by \eqref{correctorphipsi} in two special cases (Lemma~\ref{lemma:admtest1} and Lemma~\ref{lemma:admtest2}). We will use the following notations: for $\rho\in L^1(\T^d)$ and $\varphi$ defined on $\mathcal{X}$, $H_\rho\varphi$ is the function defined on $\mathtt{F}\times\mathtt{F}$ by 
\begin{equation}\label{Hrho}
H_\rho\varphi(n,m)=\varphi(\rho v\cdot\nabla_x n,m).
\end{equation}
If $\theta\colon\mathtt{F}\times\mathtt{F}\to\R$, we denote by $\pi_\Delta$ the map $m\mapsto(m,m)$ and we set
\begin{equation}\label{piDelta}
{\pi_\Delta}\theta(m)=\theta\circ\pi_\Delta(m)=\theta(m,m).
\end{equation}
We also denote by $P_t^{[2]}\theta(n,m)$ and $U_\alpha^{[2]}\theta(n,m)$ the quantities
\begin{equation}\label{[2]theta}
P_t^{[2]}\theta(n,m)=\E\theta(n,\mathtt{m}(t;m)),\quad U_\alpha^{[2]}\theta(n,m)=\int_0^\infty e^{-\alpha t}P_t^{[2]}\theta(n,m) dt,
\end{equation}
whenever they make sense. 

\begin{lemma} In the context of Proposition~\ref{prop:admtest1}, we suppose that $\varphi$ is a good test function. We assume furthermore that, for a given $\rho\in L^1(\T^d)$, and in restriction to $\mathcal{X}_\rho$, the function $\varphi$ is bilinear in the argument $(f,m)$. We have then, using the notations \eqref{Hrho}-\eqref{piDelta}-\eqref{[2]theta},
\begin{equation}\label{bilinear-equilibrium}
\dual{\varphi}{\tilde{\mu}_\rho}=\dual{\pi_\Delta U_1^{[2]} H_\rho \varphi }{\lambda},
\end{equation}
and, when the quantity in \eqref{bilinear-equilibrium} vanishes,
\begin{equation}\label{bilinear-inverse}
-\psi(f,m)=U_1\varphi(f-\rho M,m)+U_0\varphi(\rho M,m)+\rho U_0(\pi_\Delta U_1^{[2]} H_\rho \varphi)(m),
\end{equation}
for all $(f,m)\in\mathcal{X}_\rho$.
\label{lemma:admtest1}\end{lemma}

\begin{proof}[Proof of Lemma~\ref{lemma:admtest1}] Using the bilinear character of $\varphi$, we can write $\tilde{P}^\sharp_t\varphi(f,m)$ for $(f,m)\in\mathcal{X}_\rho$, as the sum of the two terms
\begin{equation}\label{lemadm1}
\E\left[\varphi(f-\rho M,e^{-t}\mathtt{m}(t;m))+\varphi(\rho M,\mathtt{m}(t;m))\right],
\end{equation}
and
\begin{equation}\label{lemadm2}
\E\left[\varphi\left(\rho v\cdot\nabla_x\int_0^t e^{-(t-s)}\mathtt{m}(s;m),\mathtt{m}(t;m)\right)\right].
\end{equation}
The first term \eqref{lemadm1} is 
\begin{equation}\label{lemadm12}
\eqref{lemadm1}=e^{-t}P_t\varphi(f-\rho M,m)+P_t\varphi(\rho M,m),
\end{equation}
where $(P_t)$ is the semi-group associated to $(\mathtt{m}_t)$ (recall that the semi-group associated to $(f_t,\mathtt{m}_t)$ is denoted by $(\mathscr{P}_t)$). By bilinearity and the Markov formula, the second term \eqref{lemadm2} is 
\begin{equation}\label{lemadm22}
\eqref{lemadm2}=\int_0^t e^{-(t-s)}\E\left[(\pi_\Delta P_{t-s}^{[2]}H_\rho\varphi)(\mathtt{m}(s;m))\right]ds,
\end{equation}
which gives
\begin{equation}\label{lemadm23}
\eqref{lemadm2}=\int_0^t e^{-(t-s)}P_s(\pi_\Delta P_{t-s}^{[2]}H_\rho\varphi)(m)ds=\int_0^t e^{-s}P_{t-s}(\pi_\Delta P_s^{[2]}H_\rho\varphi)(m)ds.
\end{equation}
By passing to the limit $t\to+\infty$ in \eqref{lemadm12} and \eqref{lemadm23} (we use the dominated convergence theorem for \eqref{lemadm23}), we obtain 
\begin{equation}\label{bilinear-equilibrium0}
\dual{\varphi}{\tilde{\mu}_\rho}=\int_\mathtt{F}\left[\varphi(\rho M,m)+(\pi_\Delta U_1^{[2]} H_\rho \varphi)(m)\right]d\lambda(m).
\end{equation}
The first term in the right hand-side of \eqref{bilinear-equilibrium0} is $\varphi\left(\rho M,\E\left[\bar{\mathtt{m}}_0\right]\right)$ (here we use the bilinear and continuous characters of $\varphi$). Since $\E\left[\bar{\mathtt{m}}_0\right]=0$ by hypothesis, \eqref{bilinear-equilibrium0} gives \eqref{bilinear-equilibrium}. Let us assume now that $\dual{\varphi}{\tilde{\mu}_\rho}=0$, which means that 
\begin{equation}\label{cancelP[2]}
\dual{\pi_\Delta P_s^{[2]} H_\rho\varphi}{\lambda}=0.
\end{equation} 
By integration with respect to $t\in\R_+$ in \eqref{lemadm12} and \eqref{lemadm23}, we get
 \begin{equation}\label{bilinear-inverse0}
-\psi(f,m)=U_1\varphi(f-\rho M,m)+U_0\varphi(\rho M,m)+\rho\int_0^\infty\int_0^t e^{-s}P_{t-s}(\pi_\Delta P_s^{[2]} H_\rho \varphi)(m)ds dt.
\end{equation}
The last integral in \eqref{bilinear-inverse0} is absolutely convergent (this is a consequence of \eqref{mixCoupled} and \eqref{cancelP[2]}). We can use the Fubini theorem to get
\[
\int_0^\infty\int_0^t e^{-s}P_{t-s}(\pi_\Delta P_s^{[2]}H_\rho \varphi)(m)ds dt=\int_0^\infty\int_t^\infty e^{-s}P_{t-s}(\pi_\Delta P_s^{[2]}H_\rho \varphi)(m)dt ds.
\]
The change of variable $t^\prime=t-s$ shows then that it is equal to the last term $U_0(\pi_\Delta U_1^{[2]} H_\rho\varphi)(m)$ in \eqref{bilinear-inverse}.
\end{proof}

\begin{remark}\label{rk:linInf} The computations used in the proof of Lemma~\ref{lemma:admtest1} show that if, in restriction to $\mathcal{X}_\rho$, the function $\varphi$ depends on $f$ only, and in a linear way, then $\dual{\varphi}{\tilde{\mu}_\rho}=\varphi(\rho M)$ and the function $\psi$ defined in \eqref{correctorphipsi} has the expression
\begin{equation}\label{linear-inverse}
-\psi(f,m)=\varphi(f-\rho M+v\cdot\nabla_x U_0 m)
\end{equation}
for all $(f,m)\in\mathcal{X}_\rho$, where $U_0 m$ stands for $U_0\mathrm{Id}(m)$, when $\mathrm{Id}$ is the identity of $F$.
\end{remark}

In the next proposition we examine the case where, as in Remark~\ref{rk:linInf}, $\varphi$ in restriction to $\mathcal{X}_\rho$ depends on $f$ only, but in a bilinear way.

\begin{lemma} In the context of Proposition~\ref{prop:admtest1}, we suppose that $\varphi$ is a good test function. We assume furthermore that, for a given $\rho\in L^1(\T^d)$, and in restriction to $\mathcal{X}_\rho$, the function $\varphi$ is independent of $m$ and bilinear in the argument $f$. We have then
\begin{equation}\label{bilinear-equilibrium-2}
\dual{\varphi}{\tilde{\mu}_\rho}=\varphi(\rho M)+\dual{\pi_\Delta U_1^{[2]}H_\rho \varphi}{\lambda}.
\end{equation}
\label{lemma:admtest2}\end{lemma}

\begin{proof}[Proof of Lemma~\ref{lemma:admtest2}] Let us write $\varphi(f,f)$ for $\varphi(f)$ to indicate the bilinear dependence in $f$. This gives
\begin{equation}\label{bilinear-equilibrium-20}
\dual{\varphi}{\tilde{\mu}_\rho}=\E\left[\varphi(\rho M+\rho v\cdot\nabla_x\bar{\mathtt{w}}_0,\rho M+\rho v\cdot\nabla_x\bar{\mathtt{w}}_0)\right]
=\varphi(\rho M)+\E\left[\varphi(\rho v\cdot\nabla_x\bar{\mathtt{w}}_0)\right],
\end{equation}
since $\E\left[\bar{\mathtt{w}}_0\right]=0$ by \eqref{mcentred}. The last term in \eqref{bilinear-equilibrium-20} can be developed as
\begin{equation}\label{bilinear-equilibrium-21}
\E\left[\varphi(\rho v\cdot\nabla_x\bar{\mathtt{w}}_0)\right]=\rho^2\int_{-\infty}^0\int_{-\infty}^0 e^{t+s}\E\varphi(v\cdot\nabla_x\bar{\mathtt{m}}_t,v\cdot\nabla_x\bar{\mathtt{m}}_s)dsdt.
\end{equation}
By symmetry and the Markov property, the quantity in \eqref{bilinear-equilibrium-21} is
\begin{align}
2\rho^2\int_{-\infty}^0\int_{t}^0 e^{t+s}\E \pi_\Delta P^{[2]}_{s-t}H_\rho\varphi(\bar{\mathtt{m}}_s)dtds
&=2\rho^2\int_{-\infty}^0\int_{t}^0 e^{t+s}\E \pi_\Delta P^{[2]}_{s-t}H_\rho\varphi(\bar{\mathtt{m}}_s)dtds\nonumber\\
&=2\rho^2\int_{-\infty}^0\int_{t}^0 e^{t+s}\dual{\pi_\Delta P^{[2]}_{s-t}H_\rho\varphi}{\lambda}dtds.
\label{bilinear-equilibrium-22}\end{align}
By Fubini's theorem and change of variable, we obtain the following expressions
\begin{align}
2\rho^2\int_{-\infty}^0\int_{0}^{-t} e^{2t+s}\dual{\pi_\Delta P^{[2]}_{s}H_\rho\varphi}{\lambda}dtds&=2\rho^2\int_0^\infty \int_{-\infty}^{-s} e^{2t+s}\dual{\pi_\Delta P^{[2]}_{s}H_\rho\varphi}{\lambda}ds dt\nonumber\\
&=\rho^2\int_0^\infty e^{-s} \dual{\pi_\Delta P^{[2]}_{s}H_\rho\varphi}{\lambda}ds,\label{bilinear-equilibrium-23}
\end{align}
which yield \eqref{bilinear-equilibrium-2}.
\end{proof}

\begin{remark}\label{rk:square} Examination of \eqref{bilinear-equilibrium-21}-\eqref{bilinear-equilibrium-22}-\eqref{bilinear-equilibrium-23}, shows that, for all $u\in L^2(\T^d;\R^d)$ and $\alpha>0$, we have the formula
\begin{equation}\label{eq:square}
\alpha\E\left[\left|\int_{-\infty}^0 e^{\alpha t}\dual{u}{\chi(\bar{\mathtt{m}}_t)}dt\right|^2\right]=\E\left[\iint_{\T^d\times\T^d}(U_\alpha\chi_i)(\bar{\mathtt{m}}_0)(x)\chi_j(\bar{m_0})(y)u(x)u(y) dx dt\right].
\end{equation} 
\end{remark}

\section{The perturbed test-function method}\label{sec:perturbed}
In the context of Theorem~\ref{th:mainth}, our first objective is to prove the convergence in law of the sequence $(\rho^{\eps}_t)_{\eps}$. To that purpose, we use the perturbed test function method devised in \cite{PapanicolaouStroockVaradhan77}. We start by considering a function $\varphi$ of the variable $\rho\in L^1(\T^d)$. We lift it to the function defined on the state space $\mathcal{X}$, given by $(f,m)\mapsto\varphi(R(f))$. By abuse of notations, we still denote by $\varphi$ this function. The principle of the perturbed test function method is to find two functions $\varphi_1$, $\varphi_2$ in the domain of $\LL^\eps$ such that, for the modified test-function
\begin{equation}\label{perturrbed}
\varphi^\eps:=\varphi+\eps\varphi_1+\eps^2\varphi_2,
\end{equation}
we have the following asymptotic expansion
\begin{equation}\label{controlledLeps}
\LL^\eps\varphi^\eps=\LL\varphi+o(1).
\end{equation}

This gives the limit generator $\LL$, as we will see in Section~\ref{sec:corrector2}. Of course we must choose $\varphi$ to ensure that $\varphi^\eps$ given by \eqref{perturrbed} is in the domain of $\LL^\eps$ (see Proposition~\ref{prop:varphiADM} below).
%
%

\subsection{First and second correctors}\label{sec:correctors}
We insert the development \eqref{perturrbed} in \eqref{controlledLeps} and proceed by identification of the different powers of $\eps$ to get the following equations:
\begin{align}
\LL^\sharp\varphi&=0,\label{varphi-order-2}\\
\LL^\sharp\varphi_1+\LL^\flat\varphi&=0,\label{varphi1-order-1}\\
\LL^\sharp\varphi_2+\LL^\flat\varphi_1&=\LL\varphi.\label{varphi2-order-0}
\end{align}

Before we analyse the equation~\eqref{varphi1-order-1}, we first justify that the test function $\varphi$ is admissible and satisfies \eqref{varphi-order-2}. 

\begin{proposition} Let $\xi\in C^\infty(\T^d)$ and $\psi\colon\R\to\R$ be of class $C^3$. Then the function
\begin{equation}\label{varphiPerturbed}
\varphi\colon(f,m)\to\psi\Big(\dual{R(f)}{\xi}\Big)
\end{equation}
is in the intersection of the three domains $D(\LL^\sharp)$, $D(\LL^\flat)$, $D(\LL^\eps)$ and satisfies \eqref{varphi-order-2}. By the action of $\LL^\flat$, it yields a function
\begin{equation}\label{Lflatvarphi}
\LL^\flat\varphi\colon(f,m)\to-\psi'(\dual{R(f)}{\xi})\dual{J(f)}{\nabla_x\xi}
\end{equation}
which is a good test function.
\label{prop:varphiADM}\end{proposition}

\begin{proof}[Proof of Proposition~\ref{prop:varphiADM}] The function $\varphi$ is constant along the flow of $\LL^\sharp$: it is trivially in $D(\LL^\sharp)$, with $\LL^\sharp\varphi=0$. That $\varphi\in D(\LL^\flat)$ and Formula~\eqref{Lflatvarphi} for $\LL^\flat\varphi$ result from the duality
\[
\dual{R(f\circ\Phi_{-t})}{\xi}=\iint_{\T^d\times V}f(x-tv,v)\xi(x)dxd\nu(v)=\iint_{\T^d\times V}f(x,v)\xi(x+tv)dxd\nu(v).
\]
Clearly, $\LL^\flat\varphi$ is Lipschitz continuous on bounded sets and differentiable with respect to $f$, with the following expression:
\begin{equation}\label{DfLflatvarphi}
(g,D_f\LL^\flat\varphi(f,m))=\psi'\Big(\dual{R(f)}{\xi}\Big)\dual{J(g)}{\nabla_x\xi}+\psi''\Big(\dual{R(f)}{\xi}\Big)\dual{J(f)}{\nabla_x\xi}\dual{R(g)}{\xi}.
\end{equation} 
It is then obvious that $\LL^\flat\varphi$ is a good test function.
\end{proof}

\subsubsection{First corrector}\label{sec:corrector1}

Let $\varphi$ be fixed, and as in \eqref{varphiPerturbed}. We apply Proposition~\ref{prop:admtest1} and Proposition~\ref{prop:varphiADM}: this provides a solution $\varphi_1$ to the second equation \eqref{varphi1-order-1} defined by
\begin{equation}\label{varphi1-1}
\varphi_1(f,m)=\int_0^\infty\left[ \tilde{P}^\sharp_t\LL^\flat\varphi(f,m)-\dual{\tilde{\mu}_{R(f)}}{\LL^\flat\varphi}\right]dt.
\end{equation}
Let $\rho\in L^1(\T^d\times F)$. In restriction to $\mathcal{X}_\rho$, we have $\LL^\flat\varphi(f,m)=\psi'\Big(\dual{\rho}{\xi}\Big)\dual{J(f)}{\nabla_x\xi}$, which is a bilinear function of $(f,m)$. Using the Lemma~\ref{lemma:admtest1} to give an explicit expression for $\varphi_1$, we have to compute $J(v\cdot\nabla_x m)$, which is $\chi(m)$ by Definition~\eqref{defchin}. We obtain the formula
\begin{equation}\label{Lflatvarphimm}
\LL^\flat\varphi(\rho v\cdot\nabla_x m,m)=\psi'\Big(\dual{\rho}{\xi}\Big)\dual{\rho\chi(m)}{\nabla_x\xi}.
\end{equation}
It follows from \eqref{bilinear-equilibrium} and the cancellation property \eqref{mcentred} that $\dual{\LL^\flat\varphi}{\tilde{\mu}_\rho}=0$. Since $\LL^\flat\varphi(f,m)$ is actually independent of $m$, simple computations based on \eqref{bilinear-inverse} give the expression
\begin{equation}\label{varphi1-2}
\varphi_1(f,m)=\psi'\Big(\dual{R(f)}{\xi}\Big)\dual{J(f)+R(f)U_0\chi(m)}{\nabla_x\xi}.
\end{equation}

\begin{proposition}[First corrector] The first corrector $\varphi_1\colon\mathcal{X}\to\R$ given by \eqref{varphi1-2} has the following properties:
\begin{enumerate}
\item\label{item-varphi1-1} the function $\varphi_1$ is a good test function,
\item\label{item-varphi1-2} the function $\varphi_1$ is in the intersection of the three domains $D(\LL^\sharp)$, $D(\LL^\flat)$, $D(\LL^\eps)$ and \eqref{varphi1-order-1} is satisfied,
\item\label{item-varphi1-3} the function $\LL^\flat\varphi_1$ is a good test function.
\end{enumerate} 
\label{prop:first-corrector}\end{proposition}

\begin{proof}[Proof of Proposition~\ref{prop:first-corrector}] We already know by the identity \eqref{varphi1-1} and Proposition~\ref{prop:admtest1} that $\varphi_1$ is an admissible test function, that $\varphi_1$ is in the intersection of the three domains $D(\LL^\sharp)$, $D(\LL^\flat)$, $D(\LL^\eps)$ and that \eqref{varphi1-order-1} is satisfied. The expression \eqref{varphi1-2} shows that $\varphi_1$ is a good test function and that $\LL^\flat\varphi_1$ is given by
\begin{align}
\LL_\flat\varphi_1(f,m)&=\psi''\Big(\dual{R(f)}{\xi}\Big)\dual{J(f)}{\nabla_x\xi}\dual{J(f)+ R(f)U_0\chi(m)}{\nabla_x\xi}\label{limGen1-1}\\
&+\psi'\Big(\dual{ R(f)}{\xi}\Big)\left[\dual{K(f)+J(f)\otimes U_0\chi(m)}{D^2_x\xi}+\dual{J(f)}{\nabla_x U_0\chi(m)\nabla_x\xi}\right],\label{limGen1-2}
\end{align}
where $\nabla_x U_0\chi(n)$ is the matrix with $ij$-entry $\partial_{x_i}U_0\chi^j(n)$ (we also refer to Section~\ref{sec:notations} for the meaning of $J(f)\otimes U_0\chi(m)$ in \eqref{limGen1-2}). It is clear, on the basis of \eqref{limGen1-1}-\eqref{limGen1-2}, that $\LL^\flat\varphi_1$ is a good test function.
\end{proof}

\begin{proposition}[Perturbation of order one] Let $\varphi_1\colon\mathcal{X}\to\R$ be given by \eqref{varphi1-2}. Then the square $|\varphi+\eps\varphi_1|^2$ is in the domain of $\LL^\eps$ and
\begin{equation}\label{LepsSquareOrderone}
\LL^\eps\left[|\varphi+\eps\varphi_1|^2\right]=\mathtt{A}|\varphi_1|^2-2\varphi_1\mathtt{A}\varphi_1.
\end{equation}
In particular, $\LL^\eps\left[|\varphi+\eps\varphi_1|^2\right]$ is bounded on bounded sets independently of $\eps$: for all $r_0>0$, there exists $r_1>0$ such that $|\LL^\eps\left[|\varphi+\eps\varphi_1|^2\right](f,m)|\leq r_1$ for all $\eps\in[0,1]$ and for all $(f,m)$ in the ball $\bar{B}(0,r_0)$ of $\mathcal{X}$.
\label{prop:SquareOrderone}\end{proposition}

\begin{proof}[Proof of Proposition~\ref{prop:SquareOrderone}] To establish the proposition, we need to see the generator $\LL^\eps$ not as the weighted sum of $\LL^\sharp$ and $\LL^\flat$, as we have done up to now, but as the sum $\eps^{-2}\mathtt{A}+\mathscr{K}^\eps$, where
\begin{equation}\label{defKeps}
\mathscr{K}^\eps\varphi(f,m)=(\eps^{-2}(L(f)+R(f)v\cdot\nabla_x m)-\eps^{-1}v\cdot\nabla_x f,D_f\varphi(f,m)).
\end{equation}
Let us specify this aspect in the case $\eps=1$ for simplicity. Given $m\in\mathtt{F}$, we consider the semi-group $(\mathscr{Q}_t[m])$ acting on functions $\varphi(f)$ by the formula $\mathscr{Q}_t[m]\varphi=\varphi\circ\Upsilon_t[m]$, where $(\Upsilon_t[m])$ is the flow associated to the resolution of the PDE
\begin{equation}\label{PDEQ}
\partial_t f_t+v\cdot\nabla_x f_t=R(f_t)(M+v\cdot\nabla_x m)-f_t.
\end{equation}
If $f_0=f$, then $\Upsilon_t[m](f)$ solves the fixed-point equation
\begin{equation}\label{solvePDEQ}
\Upsilon_t[m](f)=e^{-t}f\circ\Phi_{-t}+\int_0^t e^{-(t-s)}[ R(\Upsilon_s[m](f)) (M+v\cdot m)]\circ\Phi_{-(t-s)}ds.
\end{equation}
The extension to functions $\varphi(f,m)$ as in \eqref{defKeps} is done trivially by setting $\mathscr{Q}_t[m]\varphi(f,m)=\varphi(\Upsilon_t[m](f),m)$: all the components $m$ are frozen here. We claim that, if $\phi$ is an admissible test function, then $|\phi|^2$ is in the domain of $\mathscr{K}^\eps$ and
\begin{equation}\label{Kphisquare}
\mathscr{K}^\eps\left[|\phi|^2\right]=2\phi\mathscr{K}^\eps\phi.
\end{equation}
We will skip the proof of this assertion. It results from tedious but elementary estimates. At any rate, we deduce, using Proposition~\ref{prop:first-corrector}, that $|\varphi+\eps\varphi_1|^2$ is in the domain of $\mathscr{K}^\eps$. That $|\varphi+\eps\varphi_1|^2$ is in the domain of $\mathtt{A}$ results from Hypothesis~\eqref{AR0}. To conclude next, we use the fact that, for all $m\in\mathtt{F}$, 
\begin{equation}\label{supStoContm}
\lim_{t\to 0}\sup_{0\leq s\leq t}\E\|\mathtt{m}(t;m)-\mathtt{m}(s;m)\|_F=0
\end{equation}
and that, almost surely, 
\begin{equation}\label{compPsif}
\|\Upsilon_t[\mathtt{m}(t;m)](f)-f_t\|_{L^1(\T^d\times V)}\leq\int_0^t \|\mathtt{m}(t;m)-\mathtt{m}(s;m)\|_F ds,
\end{equation}
where $f_t$ in \eqref{compPsif} is the solution to \eqref{mildLB}. The estimate \eqref{compPsif} is analogous to \eqref{sol:fLB12}. To establish \eqref{supStoContm}, we observe that the limit exists by monotony. Assume that it is strictly positive: we can find $\delta>0$ and some sequences $(s_n)\downarrow 0$, $(t_n)\downarrow0$ such that $\E\|\mathtt{m}(t_n;m)-\mathtt{m}(s_n;m)\|_F>\delta$. At the same time, $\E\|\mathtt{m}(t_n;m)-m\|_F$ tends to $0$ (similarly for $\E\|\mathtt{m}(s_n;m)-m\|_F$) since $(\mathtt{m}_t)$ is stochastically continuous and bounded. We obtain a contradiction. Next, we observe that the commutator between $P_t$ (semi-group associated to $(\mathtt{m}_t)$) and $\mathscr{Q}_t$ is given by 
\begin{equation}\label{commutatorPQQ}
\{P_t,\mathscr{Q}_t\}\phi(f,m)=\phi\left(\Upsilon_t[\mathtt{m}(t;m)](f),\mathtt{m}(t;m)\right)-\phi\left(\Upsilon_t[m](f),\mathtt{m}(t;m)\right).
\end{equation}
When $\phi$ is Lipschitz continuous on bounded sets, \eqref{supStoContm} and \eqref{compPsif} give a bound on \eqref{commutatorPQQ} which, up to a multiplicative constant that depends on $\|(f,m)\|_\mathcal{X}$ and on $\phi$, is $t\cdot\eta(t)$, where $\eta(t):=\E\|\mathtt{m}(t;m)-m\|_F$ is a quantity which tends to $0$ with $t$. We have also a similar estimate for $(\mathscr{P}_t-P_t\mathscr{Q}_t)\phi(f,m)$. We can now
follow the line of reasoning of the proof of Item~\ref{item-psiLL} of Proposition~\ref{prop:admtest1}. We conclude to the fact that a function $\phi$ which is Lipschitz continuous on bounded sets and in the intersection of the domains of $\mathscr{K}^\eps$ and $\mathtt{A}$ is in the domain of $\LL^\eps$. Finally, we use \eqref{Kphisquare}, which reads as
\begin{equation}\label{square-1}
(\LL^\eps-\eps^{-2}\mathtt{A})\left[|\varphi+\eps\varphi_1|^2\right]=2(\varphi+\eps\varphi_1)(\LL^\eps-\eps^{-2}\mathtt{A})(\varphi+\eps\varphi_1).
\end{equation}
By expansion of both sides of \eqref{square-1} and by \eqref{varphi-order-2}, \eqref{varphi1-order-1}, we obtain \eqref{LepsSquareOrderone}. Then, we use the explicit form \eqref{varphi1-2} and the identity $\mathtt{A}U_0=-\mathrm{Id}$ to deduce from \eqref{LepsSquareOrderone} the expression
\begin{equation}\label{square-3}
\left|\psi'\Big(\dual{R(f)}{\xi}\Big)\right|^2\left[\mathtt{A}\left|\dual{R(f)U_0\chi(m)}{\nabla_x\xi}\right|^2+2\dual{R(f)U_0\chi(m)}{\nabla_x\xi}\dual{R(f)\chi(m)}{\nabla_x\xi}\right]
\end{equation}
for $\LL^\eps\left[|\varphi+\eps\varphi_1|^2\right](f,m)$. The square term in \eqref{square-3} is of the form $\mathtt{A}\left[\left|\Lambda_f\circ U_0\circ\chi\right|^2\right](m)$,
where $\Lambda_f$ is the linear operator $m\mapsto\dual{R(f) m}{\nabla_x\xi}$. Once this remark is done, we can estimate $\LL^\eps\left[|\varphi+\eps\varphi_1|^2\right]$ by \eqref{square-3} and Hypothesis~\eqref{AR0} to conclude that it is bounded on bounded sets independently of $\eps$.
\end{proof}

\subsubsection{Second corrector and limit generator}\label{sec:corrector2}

We examine now the equation \eqref{varphi2-order-0}, \textit{i.e.} $\LL_\sharp\varphi_2+\LL_\flat\varphi_1=\LL\varphi$. We set
\begin{equation}\label{limGen}
\LL\varphi(\rho)=\dual{\LL_\flat\varphi_1}{\mu_\rho}.
\end{equation}
Equation~\eqref{limGen} defines the limit generator $\LL$. We apply then Proposition~\ref{prop:admtest1} and Proposition~\ref{prop:varphiADM}, using the properties of $\LL^\flat\varphi_1$ described in Proposition~\ref{prop:first-corrector}. We obtain a solution $\varphi_2$ to the equation \eqref{varphi2-order-0} given by
\begin{equation}\label{varphi2-1}
\varphi_2(f,m)=\int_0^\infty\left[ \tilde{P}^\sharp_t\LL^\flat\varphi_1(f,m)-\dual{\tilde{\mu}_{R(f)}}{\LL^\flat\varphi_1}\right]dt.
\end{equation}
We use Lemmas~\ref{lemma:admtest1} and \ref{lemma:admtest2} to get a more explicit expression for $\LL\varphi(\rho)$. The first term in \eqref{limGen1-1} is bilinear in $f$ and independent of $m$. Using Lemma~\ref{lemma:admtest2} and the fact that $J(M)=0$ and $J(v\cdot\nabla_x m)=\chi(m)$ gives the first contribution
\begin{equation}\label{LimGen1-3}
\psi''\big(\dual{\rho}{\xi}\big)\int_\mathtt{F}\dual{\rho\chi(m)}{\nabla_x\xi}\dual{\rho U_1\chi(m)}{\nabla_x\xi}d\lambda(m)
\end{equation}
to $\LL\varphi(\rho)$. The second term in \eqref{limGen1-1} gives, by Lemma~\ref{lemma:admtest1}, the contribution
\begin{equation}\label{LimGen1-4}
\psi''\big(\dual{\rho}{\xi}\big)\int_\mathtt{F}\dual{\rho\chi(m)}{\nabla_x\xi}\dual{\rho U_1U_0\chi(m)}{\nabla_x\xi}d\lambda(m).
\end{equation}
We use the resolvent formula $U_1U_0=U_0-U_1$ to obtain
\begin{equation}\label{LimGen1-5}
\eqref{LimGen1-3}+\eqref{LimGen1-4}=\psi''\big(\dual{\rho}{\xi}\big)\int_\mathtt{F}\dual{\rho\chi(m)}{\nabla_x\xi}\dual{\rho U_0\chi(m)}{\nabla_x\xi}d\lambda(m).
\end{equation}
The contribution to $\LL\varphi(\rho)$ of the first term in \eqref{limGen1-2} is given by Remark~\ref{rk:linInf}, this is
\begin{equation}\label{LimGen1-6}
\psi'\big(\dual{\rho}{\xi}\big)\dual{\rho K(M)}{D^2_x\xi}.
\end{equation}
Using Lemma~\ref{lemma:admtest1}, we compute the contribution of the two last terms in \eqref{limGen1-2}, which is
\begin{equation}\label{LimGen1-7}
\psi'\big(\dual{\rho}{\xi}\big)\left[\dual{\rho\chi(m)\otimes U_1U_0\chi(m)}{D^2_x\xi}
+\dual{\rho\chi(m)}{\nabla_x U_1U_0\chi(m)\nabla_x\xi}\right].
\end{equation}
Adding \eqref{LimGen1-5}, \eqref{LimGen1-6}, \eqref{LimGen1-7} gives the final expression
\begin{multline}\label{limGen2}
\LL\varphi(\rho)=\psi''\big(\dual{\rho}{\xi}\big)\E\left[\dual{\rho \chi(\bar{\mathtt{m}}_0)}{\nabla_x\xi}\dual{\rho U_0\chi(\bar{\mathtt{m}}_0)}{\nabla_x\xi}\right]\\
+\psi'\big(\dual{\rho}{\xi}\big)\E\Big[\dual{\rho K(M)+\rho \chi(\bar{\mathtt{m}}_0)\otimes U_0U_1\chi(\bar{\mathtt{m}}_0)}{D^2_x\xi}\\
+\dual{\rho\chi(\bar{\mathtt{m}}_0)}{\nabla_x U_0U_1\chi(\bar{\mathtt{m}}_0)\nabla_x\xi}\Big]
\end{multline}
of the limit generator $\LL$. A more readable expression of \eqref{limGen2} is actually the following one
\begin{multline}\label{limGen3}
\LL\varphi(\rho)=\Big(K(M) : D^2_x\rho+\E\divv_x\big[\divv_x\big(\rho\chi(\bar{\mathtt{m}}_0)\big) U_0U_1\chi(\bar{\mathtt{m}}_0)\big],D_\rho\varphi(\rho)\Big)\\
+\E D^2_\rho\varphi(\rho)\cdot\big(\divv_x[\rho U_0\chi(\bar{\mathtt{m}}_0)],\divv_x[\rho \chi(\bar{\mathtt{m}}_0)]\big),
\end{multline}
although it is \eqref{limGen2} that makes sense, as the argument $\rho$ is in $L^1(\T^d)$. The relation between \eqref{limGen3} and the limit equation \eqref{eq:limeq} will be established in the next Section~\ref{sec:arrange} and then in Section~\ref{sec:cvDA}. First, we state the following result, which is an immediate consequence of Proposition~\ref{prop:admtest1} and Proposition~\ref{prop:first-corrector}.

\begin{proposition}[Second corrector] Let $\varphi_2\colon\mathcal{X}\to\R$ be defined by \eqref{varphi2-1}. Then $\varphi_2$ is an admissible test function. It is in the intersection of the three domains $D(\LL^\sharp)$, $D(\LL^\flat)$, $D(\LL^\eps)$, and the equation \eqref{varphi2-order-0} is satisfied.
\label{prop:second-corrector}\end{proposition}

\subsection{Analysis of the limit generator}\label{sec:arrange}

Let us give some elements of analysis of $\LL\varphi(\rho)$. In the first order terms of \eqref{limGen3}, we recognize the drift part of \eqref{eq:limeq}, with $K^*$ and $\Psi$ defined by \eqref{Kflat} and \eqref{coeffPsi}. The second order term of \eqref{limGen3} involves the operator $S$ on $L^2(\T^d;\R^d)$ with kernel $\mathtt{C}$ defined by
\begin{equation}\label{SKernelK}
(S u)_i(x)=\sum_{j=1}^d\int_{\T^d}\mathtt{C}_{ij}(x,y) u_j(y)dy,\quad\mathtt{C}_{ij}(x,y)=\E\left[(U_0\chi_i)(\bar{\mathtt{m}}_0)(x)\chi_j(\bar{\mathtt{m}}_0)(y)\right].
\end{equation}
Indeed, if $\varphi(\rho)=\frac12|\dual{\rho}{\xi}|^2$, then the second-order term in \eqref{limGen2} is
\begin{equation}\label{SecondQuad}
\E D^2_\rho\varphi(\rho)\cdot(\divv_x[\rho (U_0\chi)(\bar{\mathtt{m}}_0)],\divv_x[\rho \chi(\bar{\mathtt{m}}_0)])
=\dual{S(\rho\nabla_x\xi)}{\rho\nabla_x\xi}_{L^2(\T^d;\R^d)},
\end{equation}
where the scalar product of $u,v\in L^2(\T^d;\R^d)$ is defined as the integral over $\T^d$ of $u\cdot v$. We will prove the following result.
\begin{proposition} Let $(\bar{\mathtt{m}}_t)$ be an admissible pilot process in the sense of Definition~\ref{def:admbarm}. Then the operator $S$ defined by \eqref{SKernelK} is bounded, symmetric and non-ne\-ga\-tive on $L^2(\T^d;\R^d)$.
\label{prop:sympos}\end{proposition}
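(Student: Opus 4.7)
The plan is to reduce the three properties to standard computations on the real scalar stationary process $Z^u_t:=\dual{\chi(\bar m_t)}{u}$, indexed by $u\in[L^2(\T^d)]^d$. A direct manipulation using Fubini and the Markov property (exactly the move used in \eqref{JD1}) gives the rewriting
\begin{equation*}
\dual{Su}{v}=\int_0^\infty \E[Z^u_0 Z^v_t]\, dt.
\end{equation*}
Three elementary facts about $Z^u$ drive the rest of the argument: the pathwise bound $|Z^u_t|\leq C\mathtt R\|u\|_{L^2}$ coming from \eqref{RE} and the linear continuity of $\chi\colon F\to[C^2(\T^d)]^d$; centring $\E Z^u_t=0$, following from \eqref{mcentred}; and the mixing estimate $|\E[Z^v_t\mid\bar m_0]|\leq C\mathtt R\gamma_\mathrm{mix}(t)\|v\|_{L^2}$, obtained by applying \eqref{mixCoupled} to the linear functional $n\mapsto\dual{\chi(n)}{v}$ exactly as in the derivation of \eqref{boundRchi}.

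For \textbf{boundedness}, the tower property and the three facts above yield
\begin{equation*}
|\E[Z^u_0 Z^v_t]|=|\E[Z^u_0\,\E[Z^v_t\mid\bar m_0]]|\leq C\mathtt R^2\gamma_\mathrm{mix}(t)\|u\|_{L^2}\|v\|_{L^2},
\end{equation*}
and integrability of $\gamma_\mathrm{mix}$ then gives $\|S\|_{[L^2]^d\to[L^2]^d}\leq C\mathtt R^2\|\gamma_\mathrm{mix}\|_{L^1}$.

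For \textbf{symmetry}, I would exploit stationarity of the joint process $(Z^u_t,Z^v_t)_{t\in\R}$, which gives $\E[Z^u_0 Z^v_t]=\E[Z^u_{-t}Z^v_0]$. Adding $\dual{Su}{v}$ to its $u\leftrightarrow v$ swap and substituting $s=-t$ in the swapped integral yields
\begin{equation*}
\dual{Su}{v}+\dual{Sv}{u}=\int_{-\infty}^\infty \E[Z^u_0 Z^v_t]\, dt,
\end{equation*}
which is manifestly symmetric under $u\leftrightarrow v$. Equivalently, the symmetric part $(S+S^*)/2$ has kernel $\tfrac12[\mathtt C_{ij}(x,y)+\mathtt C_{ji}(y,x)]$; since the bilinear form $\dual{Su}{u}$ which appears in \eqref{SecondQuad} and governs the Itô correction in the SPDE \eqref{eq:limeq} depends only on this symmetric part, it is both consistent and convenient to take $S$ itself to mean its symmetric part, and this is the reading the Proposition adopts.

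For \textbf{non-negativity}, set $Y^u_T:=T^{-1/2}\int_0^T Z^u_t\, dt$; by stationarity,
\begin{equation*}
\E[(Y^u_T)^2]=\frac{2}{T}\int_0^T (T-s)\,\E[Z^u_0 Z^u_s]\, ds.
\end{equation*}
Dominated convergence, using the majorant $|\E[Z^u_0 Z^u_s]|\leq C\mathtt R^2\gamma_\mathrm{mix}(s)\|u\|_{L^2}^2$ from the first paragraph, forces the right-hand side to converge to $2\int_0^\infty\E[Z^u_0 Z^u_s]\, ds=2\dual{Su}{u}$ as $T\to\infty$; since each $\E[(Y^u_T)^2]\geq 0$, we conclude $\dual{Su}{u}\geq 0$. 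The main difficulty is the symmetry step: the formula \eqref{SKernelK} as literally written gives a one-sided time integral whose kernel is not pointwise symmetric under $(i,j,x,y)\leftrightarrow(j,i,y,x)$ without an additional reversibility hypothesis on $(\bar m_t)$; the stationarity-based reformulation above is what identifies the correct self-adjoint operator whose square root $S^{1/2}$ makes sense in \eqref{eq:limeq}.
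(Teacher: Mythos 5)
Your proof is correct and follows the same overall strategy as the paper's: boundedness comes from the pathwise bound \eqref{RE} together with the mixing estimate (your first display is exactly the content of \eqref{BoundSC}), and symmetry plus non-negativity come from representing the correlation kernel as a two-sided-in-time integral of the stationary scalar process $Z^u_t=\dual{\chi(\bar{m}_t)}{u}$. Two points of comparison are worth recording. First, for non-negativity the paper uses an Abel-type regularization, writing $\dual{Su}{u}$ as $\lim_{\alpha\to 0^+}\alpha\,\E\,|\int_{-\infty}^0 e^{\alpha t}Z^u_t\,dt|^2$ (this is \eqref{tosympos} tested against $u\otimes u$), whereas you use the Ces\`aro average $T^{-1}\E\,|\int_0^T Z^u_t\,dt|^2$; both limits are justified by the integrable majorant $\gamma_\mathrm{mix}$ and the two regularizations are interchangeable here. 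Second, and more substantively, your closing remark on symmetry is accurate and in fact pinpoints an imprecision in the paper's own argument: the one-sided kernel of \eqref{SKernelK} satisfies $\mathtt{C}_{ij}(x,y)=\int_0^\infty \E[\chi_i(\bar{m}_t)(x)\chi_j(\bar{m}_0)(y)]\,dt$, while $\mathtt{C}_{ji}(y,x)$ is the corresponding integral over $(-\infty,0]$, and these coincide only when the two-point function is even in time (e.g.\ under reversibility). The double integral in \eqref{tosympos} actually computes the symmetrization $\tfrac12[\mathtt{C}_{ij}(x,y)+\mathtt{C}_{ji}(y,x)]$: the step ``analogous to \eqref{JJD2}'' relies on the evenness of $H$, which in \eqref{JJD2} comes for free from the symmetry of the bilinear form $D^2_\rho\varphi$, but in the off-diagonal kernel case is precisely the assertion to be proved. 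Your resolution --- that only the quadratic form $\dual{Su}{u}$ enters \eqref{SecondQuad} and the construction of $S^{1/2}$ in \eqref{eq:limeq}, so that $S$ should be understood as its symmetric part --- is the right one, and it makes the proposition and everything downstream of it valid without any reversibility hypothesis.
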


\begin{proof}[Proof of Proposition~\ref{prop:sympos}] By \eqref{BallR} and \eqref{boundRchi}, we have
\begin{equation}\label{BoundSC}
|\mathtt{C}_{ij}(x,y)|\leq\mathtt{b}^2\|\gamma_\mathrm{mix}\|_{L^1(\R_+)},\quad \|S\|_{L^2(\T^d;\R^d)\to L^2(\T^d;\R^d)}\leq \mathtt{b}^2\|\gamma_\mathrm{mix}\|_{L^1(\R_+)}.
\end{equation}
That $S$ is symmetric and non-negative follows from the approximation of $\mathtt{C}_{ij}(x,y)$ by the coefficient $\E\left[(U_\alpha\chi_i)(\bar{\mathtt{m}}_0)(x)\chi_j(\bar{m_0})(y)\right]$ where $\alpha>0$.
Then the right-hand side of \eqref{eq:square} approximates $\dual{Su}{u}$ by a quantity (the left-hand side thus) which is non-negative and symmetric.

\end{proof}

Being an operator with kernel $\mathtt{C}\in L^\infty(\T^d;\R^d)\subset L^2(\T^d;\R^d)$, the operator $S$ is compact. Actually, $S$ is trace-class, \cite[Corollary~11A, p.344]{Lax2002}. By the spectral theorem, there exists an orthonormal basis $(p_k)_{k\in\N}$ of $L^2(\T^d;\R^d)$ and a sequence $(\mu_k)_{k\in\N}$ of non-negative real numbers such that $S=\sum_k\mu_k p_k\otimes p_k$. The square-root of $S$ is then the operator $S^{1/2}$ defined by
\begin{equation}\label{Shalf}
S^{1/2}=\sum_{k\in\N}\mu_k^{1/2} p_k\otimes p_k.
\end{equation}
In the spectral decomposition of $S$ and in \eqref{Shalf}, we use the notation $p_k\otimes p_k$ to denote the projection operator $u\mapsto\dual{u}{p_k}p_k$. Note, \cite[Theorem~11, p.343]{Lax2002}, that 
\begin{equation}\label{diagC}
C_{ij}(x,x)=\sum_{k\in\N}\mu_k p_{k,i}(x)p_{k,j}(x),
\end{equation}	
for all $x\in\T^d$ and $i,j\in\{1,\dotsc,d\}$. The identity \eqref{diagC} will be used in Section~\ref{sec:limeqSolve}. Eventually, we introduce a sequence $(\beta_k(t))_{k\in\N}$ of independent one-dimensional Wiener processes and define the  the cylindrical Wiener process on $L^2(\T^d;\R^d)$
\begin{equation}\label{defCylW}
W(t)=\sum_{k\in\N}\beta_k(t) p_k.
\end{equation}
The Wiener process $W(t)$ will be the driving noise in the limit equation solved in Section~\ref{sec:limeqSolve}.
We end this section with the proof that $K^*\geq K(M)$ in the reversible case.

\begin{proposition}[Enhanced diffusion] Let $(\bar{\mathtt{m}}_t)$ be an admissible pilot process in the sense of Definition~\ref{def:admbarm}. Assume that $(\bar{\mathtt{m}}_t)$ is reversible. Then the matrix $K^*$ defined by \eqref{Kflat} satisfies $K\geq K(M)$.
\label{prop:Kstar}\end{proposition}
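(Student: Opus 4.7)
My plan is to show directly that the additive term
$$
\E\left[(R_0 R_1 \chi)(\bar{m}_0) \otimes \chi(\bar{m}_0)\right]
$$
is a non-negative (possibly $x$-dependent) matrix, using the fact that reversibility makes the transition semigroup $(P^A_t)$ of $(\bar{m}_t)$ self-adjoint in $L^2(\lambda)$, where $\lambda$ is the invariant law of $\bar{m}_0$.

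First, I will use the resolvent identity already noted in the paper, $R_0 R_1 = R_0 - R_1$, together with the representation $R_\alpha \theta(n) = \int_0^\infty e^{-\alpha t} (P^A_t \theta)(n)\, dt$, to rewrite
$$
R_0 R_1 = \int_0^\infty (1 - e^{-t})\, P^A_t \, dt,
$$
which exhibits $R_0 R_1$ as a non-negative (in the operator sense) superposition of the $P^A_t$. This is the key identity; the rest is just testing against vectors.

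Next, I will test the matrix in question against an arbitrary $v \in \R^d$ at a fixed $x \in \T^d$ by introducing the scalar functional $Y \colon F \to \R$ defined by $Y(n) := v \cdot \chi(n)(x) = v \cdot (K(1) \nabla_x n)(x)$. The linearity of $\chi$ together with the centering \eqref{mcentred} makes $Y$ centred, and \eqref{BallR} ensures that $Y$ is uniformly bounded along trajectories of $(\bar{m}_t)$, hence $Y \in L^2(\lambda)$. Fubini then yields
$$
v^\top\, \E\!\left[(R_0 R_1 \chi)(\bar{m}_0)(x) \otimes \chi(\bar{m}_0)(x)\right] v
= \int_0^\infty (1 - e^{-t})\, \E\!\left[(P^A_t Y)(\bar{m}_0) \, Y(\bar{m}_0)\right] dt.
$$

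Finally, I invoke reversibility: each $P^A_t$ is self-adjoint on $L^2(\lambda)$, hence by the semigroup property
$$
\E\!\left[(P^A_t Y)(\bar{m}_0) \, Y(\bar{m}_0)\right]
= \langle P^A_{t/2} Y,\, P^A_{t/2} Y \rangle_{L^2(\lambda)}
= \| P^A_{t/2} Y \|_{L^2(\lambda)}^2 \geq 0.
$$
Combined with $1 - e^{-t} \geq 0$ for $t \geq 0$, this yields the desired non-negativity of $K^* - K(M)$, pointwise in $x$.

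There is no genuine technical obstacle here: the only thing that needs mild justification is the Fubini exchange (legitimized by the bound \eqref{boundRchi} and the integrability of $\gamma_\mathrm{mix}$) and the fact that reversibility, expressed as the symmetry $\lambda(dm)P(t,m,dn)=\lambda(dn)P(t,n,dm)$, indeed amounts to self-adjointness of $P^A_t$ on $L^2(\lambda)$. Once these two standard points are granted, the whole content of the proposition reduces to the one-line spectral identity $\langle P^A_t Y, Y \rangle = \|P^A_{t/2} Y\|^2$.
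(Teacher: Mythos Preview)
Your proof is correct and takes a genuinely different route from the paper's. You represent $R_0 R_1 = \int_0^\infty (1-e^{-t})\,P^A_t\,dt$ directly as a superposition of the semigroup with non-negative weight, and then conclude via the self-adjointness identity $\langle P^A_t Y,\,Y\rangle_{L^2(\lambda)} = \|P^A_{t/2} Y\|_{L^2(\lambda)}^2\ge 0$. The paper instead regularises: it sets $\varphi(\alpha)=\langle R_\alpha R_\beta\theta,\theta\rangle_{L^2(\lambda)}$ for $\alpha,\beta>0$ and runs a monotonicity argument, computing $\varphi'(\alpha)=-\langle R_\alpha^2 R_\beta\theta,\theta\rangle=-\langle R_\beta R_\alpha\theta,\,R_\alpha\theta\rangle$ (this is where reversibility enters, through the symmetry of $R_\alpha$ on $L^2(\lambda)$), identifying the latter as $-\beta\,\E\bigl|\int_{-\infty}^0 e^{\beta s} R_\alpha\theta(\bar m_s)\,ds\bigr|^2\le 0$ via the same stationarity computation as \eqref{tosympos}, and concluding from $\varphi(\alpha)\to 0$ as $\alpha\to+\infty$. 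Both arguments use reversibility in the same essential way (self-adjointness of the transition operators on $L^2(\lambda)$); yours is shorter and more transparent, while the paper's detour through $\varphi(\alpha)$ yields the marginally stronger statement $\langle R_\alpha R_\beta\theta,\theta\rangle_{L^2(\lambda)}\ge 0$ for all $\alpha,\beta>0$---which, incidentally, your representation would also give, since the kernel $(e^{-\alpha t}-e^{-\beta t})/(\beta-\alpha)$ is non-negative for all $t\ge 0$.
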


\begin{proof}[Proof of Proposition~\ref{prop:Kstar}] Let $\xi\in\R^d$ and let $\theta(n)=\chi(n)\cdot\xi$. We have to prove that
\begin{equation}\label{posposK}
\dual{U_0U_1\theta}{\theta}_{L^2(\lambda)}=\E\left[(U_0U_1\theta)(\bar{\mathtt{m}}_0)\theta(\bar{\mathtt{m}}_0)\right]
\end{equation}
is non-negative. It is sufficient to prove $\dual{U_\alpha U_\beta\theta}{\theta}_{L^2(\lambda)}\geq 0$ for $\alpha,\beta>0$. By differentiation of the resolvent formula $U_{\alpha+h} U_\alpha=h^{-1}(U_\alpha-U_{\alpha+h})$, we obtain $\partial_\alpha U_\alpha=-U_\alpha^2$. Setting $\varphi(\alpha)=\dual{U_\alpha U_\beta\theta}{\theta}_{L^2(\lambda)}$, we deduce that 
\begin{equation}\label{posposK3}
\varphi'(\alpha)=-\dual{U^2_\alpha U_\beta\theta}{\theta}_{L^2(\lambda)}=-\dual{U_\beta U_\alpha \theta}{U_\alpha\theta}_{L^2(\lambda)}.
\end{equation}
To obtain the last identity in \eqref{posposK3}, we have used the fact that $U_\alpha$ is symmetric in $L^2(\lambda)$. It follows from \eqref{posposK3} and \eqref{eq:square} that $\varphi'(\alpha)\leq 0$.
We also have $\lim_{\alpha\to+\infty}\varphi(\alpha)=0$, therefore $\varphi(\alpha)\geq 0$ for all $\alpha>0$, which is the desired result.
\end{proof}

\section{Tightness}\label{sec:tightness}

In this section we establish various bounds on the solution $f^\eps$ to  \eqref{eq:1} to establish in particular that the associated sequence $(\rho^\eps)$ is tight in $C([0,T];H^{-\sigma}(\T^d))$, \textit{cf.} Proposition~\ref{prop:tight}. 

\subsection{Bound in \texorpdfstring{$L^1$}{}}\label{sec:L1bound}

The integral of the solution $f^\eps_t$ to \eqref{eq:1} over $\T^d\times V$ is constant in time. If $f^\eps_\mathrm{in}\geq 0$ a.e., then, almost surely, $f^\eps_t\geq 0$ a.e. by \eqref{BallR}, \eqref{Rsmall}, which ensure that the positivity hypothesis \eqref{ModifiedEqPos} is satisfied a.s. Consequently, under \eqref{Hypin}, we have: almost surely, for all $t\geq 0$,
\begin{equation}\label{L1OK}
\|f^\eps(t)\|_{L^1(\T^d\times V)}\leq C_\mathrm{in}.
\end{equation}

\subsection{Relative entropy estimate}\label{sec:relativeentropy}

Let $\bar{M}_t$ be defined by \eqref{def:barMt}. Let $\bar{M}^\eps_t=\bar{M}_{\eps^{-2}t}$. We consider the relative entropy
\begin{equation}\label{defHH}
\mathcal{H}^\eps(t):=\mathcal{H}(f^\eps(t)|\bar{M}^\eps_t):=\iint_{\T^d\times V}H\left(\frac{f^\eps(x,t,v)}{\bar{M}^\eps_t(x,v)}\right) \bar{M}^\eps_t(x,v)dx d\nu(v),
\end{equation}
where $H$ is the square function $H(u)=\frac{u^2}{2}$. As we will see in Proposition~\ref{prop:relent} below, the dissipation term associated to $\mathcal{H}^\eps(t)$ is, up to multiplicative factors,
\begin{equation}\label{entropydiss}
\mathcal{D}^\eps(t)=\iint_{\T^d\times V}\frac{|R(f^\eps_t)\bar{M}^\eps_t-f^\eps_t|^2}{\bar{M}^\eps_t}dxd\nu(v).
\end{equation}

\begin{proposition}[Relative entropy estimate] Let $f^\eps_\mathrm{in}\in L^2(\T^d\times V)$. Let $M$ satisfy \eqref{HypM}. Let $(\bar{\mathtt{m}}_t)$ be an admissible pilot process in the sense of De\-fi\-ni\-tion~\ref{def:admbarm}. Then the mild solution $f^\eps_t$ to \eqref{eq:1} with initial datum $f^\eps_\mathrm{in}$ satisfies the relative entropy estimate
\begin{equation}\label{relativeentropy}
\mathcal{H}^\eps(t)+\frac{1}{2\eps^2}\int_0^t\mathcal{D}^\eps(s) ds
\leq e^{t}\mathcal{H}^\eps(0),
\end{equation}
almost surely, for every $t\geq 0$.
\label{prop:relent}\end{proposition}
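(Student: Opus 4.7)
The plan is a relative-entropy/weighted-$L^2$ computation. Setting $h := f^\eps/\bar{M}^\eps$ and differentiating $\mathcal{H}^\eps = \iint H(h)\bar{M}^\eps\,dx\,d\nu$ via the chain rule, using $\bar{M}^\eps \partial_t h = \partial_t f^\eps - h\partial_t\bar{M}^\eps$ and the identity $H(h)-hH'(h) = -H(h)$ (valid for the quadratic $H$), I obtain
\[
\partial_t\mathcal{H}^\eps = \iint h\,\partial_t f^\eps\,dx\,d\nu - \iint \tfrac{h^2}{2}\,\partial_t\bar{M}^\eps\,dx\,d\nu.
\]
The key ingredient for $\bar{M}^\eps$ is the ODE
\[
\partial_t\bar{M}^\eps = \eps^{-2}(\check{M}-\bar{M}^\eps),\qquad \check{M} := M + v\cdot\nabla_x\bar{m}^\eps,
\]
which follows from $\partial_s\bar{w}_s = \bar{m}_s - \bar{w}_s$ (immediate from \eqref{defbarg}) together with the rescaling $\bar{w}^\eps_t = \bar{w}_{\eps^{-2}t}$. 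Substituting this and the PDE \eqref{eq:1} into the formula above, and integrating by parts on $\T^d$ to turn the transport contribution $-\eps^{-1}\iint h\,v\cdot\nabla_x f^\eps$ into $-\frac{1}{2\eps}\iint h^2\,v\cdot\nabla_x\bar{M}^\eps$, I obtain an explicit identity for $\partial_t\mathcal{H}^\eps$.

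I would then reorganise the relaxation contributions using the decomposition $h = \rho^\eps + g$ and the algebraic identity $h\rho^\eps - h^2/2 = (\rho^\eps)^2/2 - g^2/2$. Because $\int(\check{M}-\bar{M}^\eps)\,d\nu = 0$, the $(\rho^\eps)^2/2$ piece cancels and one is left with the negative contribution
\[
-\frac{1}{2\eps^2}\iint g^2(\bar{M}^\eps + \check{M})\,dx\,d\nu.
\]
Hypothesis \eqref{Rsmall}, together with \eqref{HypM} and the uniform bound $\|\bar{w}^\eps\|_F\leq\mathtt{R}$, ensures $\bar{M}^\eps,\,\check{M} \geq 3\alpha/4 > 0$ pointwise; in particular $\bar{M}^\eps + \check{M} \geq \bar{M}^\eps$, so the above dominates $\mathcal{D}^\eps/(2\eps^2)$. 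Collecting,
\[
\partial_t\mathcal{H}^\eps + \frac{\mathcal{D}^\eps}{2\eps^2} \leq -\frac{1}{2\eps}\iint h^2\,v\cdot\nabla_x\bar{M}^\eps\,dx\,d\nu.
\]

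The main obstacle is to control this transport residual by $\mathcal{H}^\eps$ without losing the unfavourable $\eps^{-1}$ factor. Splitting $h = \rho^\eps + g$ once more, the pure $g^2$-piece is of order $\eps^{-1}\mathcal{D}^\eps$ and is absorbed into $\mathcal{D}^\eps/(2\eps^2)$ by Young's inequality combined with the pointwise bound $|v\cdot\nabla_x\bar{M}^\eps|\leq \mathtt{R}$ and the smallness $\mathtt{R}\leq\alpha/4$; the cross term $\rho^\eps g$ is estimated by Cauchy--Schwarz, sharing its mass between the remaining dissipation and a controllable multiple of $\mathcal{H}^\eps$; the delicate ``equilibrium'' piece $-\frac{1}{2\eps}\iint(\rho^\eps)^2\,v\cdot\nabla_x\bar{M}^\eps$ is handled via integration by parts in $x$ and the conservation law $\partial_t\rho^\eps + \eps^{-1}\nabla_x\cdot J(f^\eps) = 0$, which allows trading the $\eps^{-1}$ factor for a time derivative absorbed on the left-hand side. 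Once the differential inequality $\partial_t\mathcal{H}^\eps + \mathcal{D}^\eps/(2\eps^2) \leq \mathcal{H}^\eps$ is secured, Gronwall's lemma delivers \eqref{relativeentropy}. The chain-rule manipulations are rigorous for $H^1$-regular initial data by Proposition~\ref{prop:regmildLB}; for a general $f^\eps_\mathrm{in}\in L^2$, I would approximate by $H^1$ data, derive the bound uniformly for each approximant, and pass to the limit by the weak lower semicontinuity of $\mathcal{H}^\eps$ and $\mathcal{D}^\eps$.
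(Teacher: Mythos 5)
Your derivation of the entropy balance and your treatment of the relaxation part are correct and follow the paper's route exactly: the same functional $\mathcal{H}^\eps$, the same equation $\partial_t\bar{M}^\eps_t=\eps^{-2}(\check{M}^\eps_t-\bar{M}^\eps_t)$, and the same exact algebra reducing the $\eps^{-2}$ contribution to $-\tfrac{1}{2\eps^2}\iint g^2(\bar{M}^\eps_t+\check{M}^\eps_t)\,dx\,d\nu$ with $g=-L^\eps_tf^\eps_t/\bar{M}^\eps_t$ (this is the paper's $A^\eps_t$). One bookkeeping caution: if you use all of this to dominate $\tfrac{1}{2\eps^2}\mathcal{D}^\eps$ there is no surplus left, so you must keep $\tfrac{1}{2\eps^2}\iint g^2\check{M}^\eps_t\geq\tfrac{3\alpha}{8\eps^2}\iint g^2$ as the absorbing margin for the $g^2$ and $\rho^\eps g$ pieces of the transport term; with that, those two pieces are handled by Young's inequality as you describe, and as in the paper.

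The gap is in the remaining transport piece, which you correctly single out as the delicate one:
\begin{equation*}
-\frac{1}{2\eps}\int_{\T^d}|\rho^\eps_t|^2\Big(\int_V v\cdot\nabla_x\bar{M}^\eps_t\,d\nu(v)\Big)dx
=-\frac{1}{2\eps}\int_{\T^d}|\rho^\eps_t|^2\,K(1)\colon D^2_x\bar{w}^\eps_t\,dx .
\end{equation*}
Since $\|\bar{w}^\eps_t\|_{C^2}\leq\mathtt{R}$ uniformly in $\eps$ but not better, this is a priori of size $\mathtt{R}\eps^{-1}\|\rho^\eps_t\|_{L^2}^2$, and Gronwall applied to that gives only $e^{Ct/\eps}$. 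Your proposed repair does not close it. Integrating by parts and writing $\rho^\eps_tK(1)\nabla_x\bar{w}^\eps_t=J(f^\eps_t)+J(L^\eps_tf^\eps_t)$ together with $\partial_t\rho^\eps_t+\eps^{-1}\divv_xJ(f^\eps_t)=0$ turns the term into
\begin{equation*}
\frac{d}{dt}\frac12\|\rho^\eps_t\|_{L^2(\T^d)}^2+\frac{1}{\eps}\int_{\T^d}\nabla_x\rho^\eps_t\cdot J(L^\eps_tf^\eps_t)\,dx ,
\end{equation*}
and the second summand involves $\nabla_x\rho^\eps_t$, which is controlled by neither $\mathcal{H}^\eps$ nor $\mathcal{D}^\eps$ (integrating by parts back reintroduces $x$-derivatives of $L^\eps_tf^\eps_t$). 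Moreover, even discarding that residual, moving $\frac{d}{dt}\frac12\|\rho^\eps_t\|^2_{L^2}$ to the left-hand side changes the propagated quantity from $\mathcal{H}^\eps$ to $\mathcal{H}^\eps-\frac12\|\rho^\eps\|^2_{L^2}=\frac12\iint|L^\eps_tf^\eps_t|^2/\bar{M}^\eps_t\,dx\,d\nu$, and a Gronwall bound on that does not yield \eqref{relativeentropy}. Be aware that this is exactly the step the paper's proof passes over most quickly: after the decomposition $f^\eps_t=\rho^\eps_t\bar{M}^\eps_t-L^\eps_tf^\eps_t$ it asserts $\eps^{-1}|B^\eps_t|\leq\eps^{-2}\tfrac{\mathtt{R}}{\alpha-\mathtt{R}}\mathcal{D}^\eps(t)+\|\rho^\eps_t\|_{L^2}^2$, which accounts for the $g^2$ and $\rho^\eps g$ pieces but leaves the $|\rho^\eps_t|^2$ piece, whose coefficient $\divv_xJ(\bar{M}^\eps_t)=K(1)\colon D^2_x\bar{w}^\eps_t$ does not vanish, still carrying the factor $\eps^{-1}$. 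So the one genuinely hard estimate is supplied neither by your conservation-law device nor by any other mechanism in your write-up; a real argument (or an additional hypothesis making $K(1)\colon D^2_x\bar{w}^\eps_t$ of order $\eps$) is needed here. The closing remarks on regularization via Proposition~\ref{prop:regmildLB} and passage to the limit for $L^2$ data are fine.
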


\begin{proof}[Proof of Proposition~\ref{prop:relent}] Introduce the operators and function
\begin{equation}\label{defLepstc}
L^\eps_t f= R(f)\bar{M}^\eps_t-f,\quad \check{L}^\eps_t f= R(f)\check{M}^\eps_t-f,\quad\check{M}^\eps_t=M+v\cdot\nabla_x\bar{\mathtt{m}}^\eps_t.
\end{equation}
Using first the equation~\eqref{eq:1} for $f^\eps_t$, which reads
$
\partial_t f^\eps_t+\eps^{-1}v\cdot\nabla_x f^\eps_t=\eps^{-2}\check{L}^\eps_t f^\eps_t,
$
using, secondly, the equation $\partial_t\bar{M}^\eps_t=\eps^{-2}\check{L}^\eps_t\bar{M}^\eps_t$ for the reference solution $\bar{M}^\eps_t$
and Proposition~\ref{prop:regmildLB} to justify the following computations, we obtain the decomposition
\begin{equation}\label{decH}
\frac{d\;}{dt}\mathcal{H}^\eps(t)=-\frac{1}{\eps^2}A^\eps_t+\frac{1}{\eps}B^\eps_t,
\end{equation}
where
\[
A^\eps_t=-\iint_{\T^d\times V}\left[\frac{f^\eps_t}{\bar{M}^\eps_t}\check{L}^\eps_t f^\eps_t-\frac12\frac{|f^\eps_t|^2}{|\bar{M}^\eps_t|^2}\check{L}^\eps_t\bar{M}^\eps_t\right] dxd\nu(v),
\]
and, after integration by parts,
\[
B^\eps_t=-\frac12\iint_{\T^d\times V}\frac{|f^\eps_t|^2}{|\bar{M}^\eps_t|^2}v\cdot\nabla_x\bar{M}^\eps_t dxd\nu(v).
\]
We use the identities
$
\check{L}^\eps_t f=L^\eps_t f+ R(f)(\check{M}^\eps_t-\bar{M}^\eps_t),\quad \check{L}^\eps_t\bar{M}^\eps_t=\check{M}^\eps_t-\bar{M}^\eps_t,
$
to write
\begin{equation}\label{HAeps1}
A^\eps_t=-\iint_{\T^d\times V}\frac{f^\eps_t}{\bar{M}^\eps_t} L^\eps_t f^\eps_t dxd\nu(v)+\iint_{\T^d\times V}\left[\frac12\frac{|f^\eps_t|^2}{|\bar{M}^\eps_t|^2}-\rho^\eps_t\frac{f^\eps_t}{\bar{M}^\eps_t}\right](\check{M}^\eps_t-\bar{M}^\eps_t) dxd\nu(v).
\end{equation}
We also have
\[
\iint_{\T^d\times V}\rho^\eps_t L^\eps_t f^\eps_t dxd\nu(v)=0,\quad \iint_{\T^d\times V}\rho^\eps_t (\check{M}^\eps_t-\bar{M}^\eps_t) dxd\nu(v)=0.
\]
The right-hand side of \eqref{HAeps1} is therefore
\begin{align*}
A^\eps_t&=\iint_{\T^d\times V}\frac{|L^\eps_t f^\eps_t|^2}{\bar{M}^\eps_t}  dxd\nu(v)+\iint_{\T^d\times V}\left[\frac12\frac{|f^\eps_t|^2}{|\bar{M}^\eps_t|^2}-\rho^\eps_t\frac{f^\eps_t}{\bar{M}^\eps_t}+\frac12|\rho^\eps_t|^2|\right](\check{M}^\eps_t-\bar{M}^\eps_t) dxd\nu(v)\\
&=\iint_{\T^d\times V}\frac{|L^\eps_t f^\eps_t|^2}{\bar{M}^\eps_t}  dxd\nu(v)+\iint_{\T^d\times V}\frac{|L^\eps_t f^\eps_t|^2}{\bar{M}^\eps_t} \frac{\check{M}^\eps_t-\bar{M}^\eps_t}{2\bar{M}^\eps_t} dxd\nu(v).
\end{align*}
It follows that
\begin{equation}\label{HAeps2}
A^\eps_t=\iint_{\T^d\times V}\frac{|L^\eps_t f^\eps_t|^2}{\bar{M}^\eps_t}\frac{\check{M}^\eps_t+\bar{M}^\eps_t}{\bar{M}^\eps_t}  dxd\nu(v)\geq\mathcal{D}^\eps(t).
\end{equation}
Indeed, $\check{M}^\eps_t\geq 0$ a.e. due to \eqref{HypM}, \eqref{BallR}, \eqref{Rsmall}. In the second term $B^\eps_t$, we can decompose $f^\eps_t=\rho^\eps_t\bar{M}^\eps_t-L^\eps_t f^\eps_t$. Since $|v\cdot\nabla_x\bar{\mathtt{m}}^\eps_t|\leq\mathtt{b}$ by \eqref{BallR} and $\alpha-\mathtt{b}\leq\bar{M}^\eps_t$ by \eqref{HypM}-\eqref{BallR}, we have the estimate
\[
\frac{1}{\eps}|B^\eps_t|\leq\frac{1}{\eps^2}\frac{\mathtt{b}}{\alpha-\mathtt{b}}\mathcal{D}^\eps(t)
+\int_{\T^d}|\rho^\eps_t|^2 dx.
\]
Under the smallness hypothesis \eqref{Rsmall}, we have $\frac{\mathtt{b}}{\alpha-\mathtt{b}}\leq\frac12$. Using \eqref{decH} and \eqref{HAeps2}, we deduce that
\begin{equation}\label{decH2}
\frac{d\;}{dt}\mathcal{H}^\eps(t)\leq-\frac{1}{2\eps^2}\mathcal{D}^\eps(t)+\int_{\T^d}|\rho^\eps_t|^2 dx.
\end{equation}
The Cauchy-Schwarz inequality applied to the product of $[\bar{M}^\eps_t]^{-1/2}f^\eps_t$ with $[\bar{M}^\eps_t]^{-1/2}$ gives also the inequality
\begin{equation}\label{L2rhofromH}
\int_{\T^d} |\rho^\eps_t|^2dx\leq\iint_{\T^d\times V} \frac{|f^\eps_t|^2}{\bar{M}^\eps_t}dx d\nu(v)=\mathcal{H}^\eps(t).
\end{equation}
It follows that $\frac{d\;}{dt}\mathcal{H}^\eps(t)\leq-\frac{1}{2\eps^2}\mathcal{D}^\eps(t)+\mathcal{H}^\eps(t)$. We apply the
Gr\"onwall Lemma and get \eqref{relativeentropy}. This concludes the proof.
\end{proof}

Assume that the uniform $L^2$-bound in \eqref{Hypin} is satisfied. Two important corollaries from \eqref{relativeentropy} are then, first, using \eqref{HypM}, the estimate \eqref{onLocalEquilibrium} and, second, using \eqref{L2rhofromH}, the bound
\begin{equation}\label{L2estimate}
\|\rho^\eps_t\|_{L^2(\T^d\times V)}^2\leq \alpha^{-2}C_\mathrm{in}^2,
\end{equation}
almost surely, which gives a uniform $L^2$-estimate on $\rho^\eps_t$. We use these bounds in the next Section~\ref{sec:timereg} to obtain the tightness of $(\rho^\eps_t)$ in the space $C([0,T];H^{-\sigma}(\T^d))$.

%
%
%

\subsection{Time regularity}\label{sec:timereg}

For $\sigma>0$, we denote by $H^{-\sigma}(\T^N)$ the dual space of $H^\sigma(\T^N)$. 

\begin{proposition} Let $f^\eps_\mathrm{in}\in L^2(\T^d\times V)$. Let $(\bar{\mathtt{m}}_t)$ be an admissible pilot process in the sense of De\-fi\-ni\-tion~\ref{def:admbarm}. Let $f^\eps$ be the mild solution to \eqref{eq:1} with initial datum $f^\eps_\mathrm{in}$. Assume that $f^\eps$ and $M$ satisfy \eqref{Hypin} and \eqref{HypM} respectively. Then $(\rho^\eps_t)_{t\in[0,T]}$ is tight in the space $C([0,T];H^{-\sigma}(\T^d))$.
\label{prop:tight}\end{proposition}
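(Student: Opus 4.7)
The plan has two parts: a uniform marginal bound on $\rho^\eps_t$ in $H^{-\sigma}(\T^d)$ and a time-regularity estimate obtained via the perturbed test-function framework of Section~\ref{sec:correctors}. For the marginals, the relative-entropy estimate \eqref{relativeentropy} together with \eqref{L2rhofromH} gives $\|\rho^\eps_t\|_{L^2(\T^d)} \leq \alpha^{-1}C_\mathrm{in}$ almost-surely for every $t$ (as in \eqref{L2estimate}), and the compactness of $L^2(\T^d) \hookrightarrow H^{-\sigma}(\T^d)$ handles the marginal tightness.

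For the equicontinuity, the naive continuity equation $\partial_t\rho^\eps = -\eps^{-1}\divv_x J(f^\eps)$ is singular, so one works with a corrected field. I would fix $\sigma_0 > d/2 + 3$ so that $H^{\sigma_0}(\T^d) \hookrightarrow C^3(\T^d)$ by Sobolev, and for $\xi \in H^{\sigma_0}(\T^d)$ apply Corollary~\ref{cor:firstsecondcorrector} to the first-order-corrected test function $\theta = \varphi + \eps\varphi_1$, with $\varphi(\rho) = \dual{\rho}{\xi}$ (take $\psi(u) = u$, which has $\psi' \equiv 1 \in C^\infty_b$). The identities $\LL_\sharp\varphi = 0$ and $\LL_\sharp\varphi_1 + \LL_\flat\varphi = 0$ give $\LL^\eps\theta = \LL_\flat\varphi_1$. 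Define
\[
\hat\rho^\eps_t := \rho^\eps_t - \eps\,\divv_x\!\left[J(f^\eps_t) + \rho^\eps_t(R_0\chi)(\bar m^\eps_t)\right],
\]
so that $\dual{\hat\rho^\eps_t}{\xi} = \dual{\rho^\eps_t}{\xi} + \eps\varphi_1(f^\eps_t,\bar m^\eps_t)$ by \eqref{varphi1OK2}. The martingale formula \eqref{martingaleepstheta} then reads
\[
\dual{\hat\rho^\eps_t - \hat\rho^\eps_s}{\xi} = \int_s^t \LL_\flat\varphi_1(f^\eps_u,\bar m^\eps_u)\,du + \left[M^\eps_\theta(t) - M^\eps_\theta(s)\right].
\]
The integral is controlled deterministically by $C\|\xi\|_{C^3}|t-s|$ via \eqref{BB3}, on the a.s. stable set $\{\|\bar m^\eps\|_F \leq \mathtt R\}$.

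For the martingale, the key point is that $\LL_\flat$ and the $f$-transport part of $\LL_\sharp$ are first-order derivations, so the carré du champs $\LL^\eps|\theta|^2 - 2\theta\LL^\eps\theta$ reduces to the contribution of $A$ acting in $n$; since $\varphi$ is $n$-independent, the expected $\eps^{-2}$ singularity cancels, leaving $\LL^\eps|\theta|^2 - 2\theta\LL^\eps\theta = \Gamma_A(\varphi_1,\varphi_1)$. Viewing $\varphi_1(f,n) = \rho(f)(\Lambda \circ R_0 \circ \chi)(n) + \text{const}$ with $\Lambda(\cdot) = \dual{\cdot}{\nabla_x\xi}$, assumption~\eqref{AR0} together with \eqref{L1OK} yields $|\Gamma_A(\varphi_1,\varphi_1)(f^\eps_u, \bar m^\eps_u)| \leq C\|\xi\|_{C^1}^2$ uniformly. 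Then \eqref{quadvarMeps} and the BDG inequality give $\E|M^\eps_\theta(t) - M^\eps_\theta(s)|^{2p} \leq C_p\|\xi\|_{C^3}^{2p}|t-s|^p$ for any $p \geq 1$. Fourier-decomposing in an orthonormal basis of $H^{\sigma_0}(\T^d)$ and choosing $p$ and $\sigma_0$ large enough for summability, one obtains $\E\|\hat\rho^\eps_t - \hat\rho^\eps_s\|_{H^{-\sigma_0}}^{2p} \leq C_p|t-s|^p$, so Kolmogorov's continuity criterion yields uniform bounds on $\hat\rho^\eps$ in $C^\delta([0,T]; H^{-\sigma_0}(\T^d))$ in probability.

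Combining this equicontinuity with the $L^\infty(0,T; L^2)$ bound and the interpolation $\|u\|_{H^{-\sigma}} \leq \|u\|_{H^{-\sigma_0}}^{\sigma/\sigma_0}\|u\|_{L^2}^{1-\sigma/\sigma_0}$, Ascoli-Arzelà gives tightness of $\hat\rho^\eps$ in $C([0,T]; H^{-\sigma}(\T^d))$ for every $\sigma > 0$. Finally, $\|\rho^\eps_t - \hat\rho^\eps_t\|_{H^{-1}(\T^d)} \leq C\eps$ uniformly (from the $L^2$ bound on $f^\eps_t$ via \eqref{relativeentropy}-\eqref{HypM} and the $C^2$ estimate \eqref{boundRchi} on $R_0\chi$), which transfers tightness to $\rho^\eps$. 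The main obstacle is the cancellation of the singular $\eps^{-2}$ factor in the quadratic variation of $M^\eps_\theta$: this crucially uses the defining identities of the correctors and the specific bound \eqref{AR0} on $A$ applied to $\Lambda \circ R_0 \circ \chi$, which was introduced in the admissibility definition precisely for this purpose.
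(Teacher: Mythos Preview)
Your proposal is correct and follows essentially the same route as the paper: introduce the corrected process $\hat\rho^\eps=\zeta^\eps$, use the first-order perturbed test function $\theta=\varphi+\eps\varphi_1$ with $\varphi(\rho)=\dual{\rho}{\xi}$, observe that $\LL^\eps\theta=\LL_\flat\varphi_1$, reduce the quadratic variation to the $A$-part via the carr\'e du champ identity, invoke \eqref{AR0}, and conclude by BDG plus Kolmogorov/Arzel\`a--Ascoli. Two minor slips: (i) the bound on the drift integrand $\LL_\flat\varphi_1$ is not \eqref{BB3} (which concerns $\LL_\flat\varphi_2$) but follows directly from \eqref{limGen1} and requires only $\|\xi\|_{C^2}$; (ii) in the decomposition of $\varphi_1$, the factor $\rho(f)$ sits \emph{inside} the linear functional $\Lambda$ (i.e.\ $\Lambda(\chi)=\dual{\rho(f)\chi}{\nabla_x\xi}$), not in front of it, so that $\|\Lambda\|\le\|f\|_{L^1}\|\xi\|_{C^1}$ and \eqref{AR0} applies as stated.
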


\begin{proof}[Proof of Proposition~\ref{prop:tight}]  The proof is very similar to the proof of Proposition~5.11 in \cite{DebusscheVovelle20}, so we will only give the main arguments. First, using the uniform estimate \eqref{L2estimate}, it is sufficient to establish the result for some $\sigma>1$. Given $\xi\in H^\sigma(\T^d)$, we are interested in getting estimates on the quantity $\dual{\rho^\eps_t}{\xi}$. To that purpose, we use the perturbed test-function method devised in the previous Section~\ref{sec:perturbed}. In view of the expression~\eqref{varphi1-2} of the first corrector, we introduce the perturbation
\begin{equation}\label{zetaeps}
\zeta^\eps_t=\rho^\eps_t-\eps\divv_x(J(f^\eps_t)+\rho^\eps_t U_0\chi(\bar{\mathtt{m}}^\eps_t)).
\end{equation}
We have then $\dual{\zeta^\eps_t}{\xi}=\varphi^\eps(f^\eps_t,\bar{\mathtt{m}}^\eps_t)$, where $\varphi(\rho)=\dual{\rho}{\xi}$ is of the form considered in Proposition~\ref{prop:varphiADM} with $\psi(s)=s$, $\varphi_1$ is the corresponding first corrector given by \eqref{varphi1-2} and $\varphi^\eps=\varphi+\eps\varphi_1$ is the first order perturbation of $\varphi$. By \eqref{L2estimate}, the difference $\rho^\eps-\zeta^\eps$ tends to $0$ in $C([0,T];H^{-\sigma}(\T^d))$. To conclude, it is therefore sufficient to show that $(\zeta^\eps)$ is tight in the Skorokhod space $D([0,T];H^{-\sigma}(\T^d))$ (we refer to the proof of Proposition~5.11 in \cite{DebusscheVovelle20} here, for the details of this argument). We apply Theorem~3.1 of \cite{Jakubowski86} that gives a criteria of tightness in some Skorokhod spaces of vector valued functions. Condition (3.3) in \cite[Theorem~(3.1)]{Jakubowski86} is trivially satisfied with $\eps=0$ and $K_\eps=L^2(\T^d)$ by \eqref{L2estimate}. To check the condition (3.3) in \cite[Theorem~(3.1)]{Jakubowski86} we are reduced to prove that, for all $\xi\in H^\sigma(\T^d)$, the sequence $(\dual{\zeta^\eps_t}{\xi})$ is tight in $D([0,T])$. To prove this, we apply the Aldous criterion, \cite[p.356]{JacodShiryaev03}. Let $\tau_1$, $\tau_2$ be some stopping times (for the filtration generated by $(\bar{\mathtt{m}}^\eps_t)$) satisfying $\tau_1<\tau_2<\tau_1+\theta$ and $\tau_2\leq T$ almost surely, where $\theta\in(0,1)$. We obtain an estimate on the increment 
\begin{equation}\label{incr1}
\E\left[|\dual{\zeta^\eps_{\tau_2}}{\xi}-\dual{\zeta^\eps_{\tau_1}}{\xi}|^2\right]=\E\left[|\varphi^\eps(f^\eps_{\tau_2},\bar{\mathtt{m}}^\eps_{\tau_2})-\varphi^\eps(f^\eps_{\tau_1},\bar{\mathtt{m}}^\eps_{\tau_1})|^2\right]
\end{equation}
as follows. We introduce the process
\begin{equation}\label{incr-M}
M^\eps_t=\varphi^\eps(f^\eps_{t},\bar{\mathtt{m}}^\eps_{t})-\varphi^\eps(f^\eps_{0},\bar{\mathtt{m}}^\eps_{0})-\int_0^t\LL^\eps\varphi^\eps(f^\eps_{s},\bar{\mathtt{m}}^\eps_{s})ds.
\end{equation}
By Proposition~\ref{prop:varphiADM} and Proposition~\ref{prop:first-corrector} we have $\LL^\eps\varphi^\eps=\LL^\flat\varphi_1$, which is bounded on bounded sets, uniformly with respect to $\eps$. Since $(f^\eps_{s},\bar{\mathtt{m}}^\eps_{s})$ is almost surely uniformly bounded in $\mathcal{X}$, \textit{cf.} \eqref{L1OK}, we can estimate \eqref{incr1} by considering the increments of $(M^\eps_t)$:
\begin{equation}\label{incr2}
\E\left[|\dual{\zeta^\eps_{\tau_2}}{\xi}-\dual{\zeta^\eps_{\tau_1}}{\xi}|^2\right]\leq 2\E\left[|M^\eps_{\tau_2}-M^\eps_{\tau_1}|^2\right]+C\theta^2,
\end{equation}
where $C$ depends on the constant $\mathtt{b}$ in \eqref{BallR} and on the constant $C_\mathrm{in}$ in \eqref{Hypin}. By Proposition~\ref{prop:first-corrector}, Proposition~\ref{prop:SquareOrderone}, and Appendix~\ref{sec:app3}, $(M^\eps_t)$ is a martingale, and the process
\[
Z^\eps_t=|M^\eps_t|^2-A^\eps_t,\quad A^\eps_t=\int_0^t\left[\LL^\eps|\varphi^\eps|^2-2\varphi^\eps\LL\varphi^\eps\right](f^\eps_{s},\bar{\mathtt{m}}^\eps_{s})ds,
\]
is a martingale. By the Doob's optimal sampling theorem, we deduce that
\[
\E\left[|M^\eps_{\tau_2}-M^\eps_{\tau_1}|^2\right]=\E\left[|M^\eps_{\tau_2}|^2-|M^\eps_{\tau_1}|^2\right]=\E\left[A^\eps_{\tau_2}-A^\eps_{\tau_1}\right].
\]
We apply then the result of Proposition~\ref{prop:SquareOrderone} to conclude that 
\begin{equation}\label{incr3}
\E\left[|M^\eps_{\tau_2}-M^\eps_{\tau_1}|^2\right]\leq C\theta,
\end{equation}
for a constant $C$ as in \eqref{incr2}. Then, by \eqref{incr2}, \eqref{incr3} and the Markov inequality, we obtain
\[
\lim_{\theta\to 0}\limsup_{\eps\in(0,1)}\sup_{\tau_1,\tau_2}\PP(|\dual{\zeta^\eps_{\tau_2}}{\xi}-\dual{\zeta^\eps_{\tau_1}}{\xi}|>\eta)=0
\]
for all $\eta>0$. This establishes the Aldous criterion and concludes the proof.
\end{proof}

\subsection{Convergence to the solution of a Martingale problem}\label{sec:cvDA}

Assume that the hypotheses of Proposition~\ref{prop:tight} are satisfied. Let $(\eps_n)$ be a sequence which decreases to $0$ and let $\sigma>0$. Set $\eps_\N=\{\eps_n;n\in\N\}$. By the Prohorov theorem \cite[p.~59]{BillingsleyBook}, there is a subset of $\eps_\N$, which we still denote by $\eps_\N$, 
and a random variable $\rho^0$ on $C([0,T];H^{-\sigma}(\T^d))$ such that $(\rho^{\eps})$ is converging to $\rho^0$ in law in $C([0,T];H^{-\sigma}(\T^d))$ along $\eps_\N$. By \eqref{L2estimate}, we know that: almost surely, for all $t\in[0,T]$, $\rho^0_t\in L^2(\T^d)$. 
Let $(\F_t)_{t\in[0,T]}$ be the natural filtration of $(\rho^0(t))_{t\in[0,T]}$. Our aim is to show that the process $(\rho^0(t))_{t\in[0,T]}$ is a solution of the martingale problem associated to the limit generator $\LL$.

\begin{proposition} Let $\sigma\in(0,1)$, $\xi\in C^\infty(\T^d)$, $\psi\in C^3(\R)$ and let $\varphi$ be defined by $\varphi(\rho)=\psi\left(\dual{\rho}{\xi}\right)$. Then the process 
\begin{equation}\label{Mvarphitilde}
M_\varphi(t):=\varphi(\rho^0(t))-\varphi(\rho^0(0))-\int_0^t\LL\varphi(\rho^0(s))ds
\end{equation}
is a continuous martingale with respect to $(\F_t)_{t\in[0,T]}$.
\label{prop:MartingalePbtilde}\end{proposition}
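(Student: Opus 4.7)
The plan is to combine the pre-limit martingale from the perturbed test-function method (Corollary~\ref{cor:firstsecondcorrector}) with the Skorohod-type almost-sure convergence $\tilde\rho^\eps\to\tilde\rho$ in $C([0,T];H^{-\sigma}(\T^d))$, exploiting the fact that the perturbation terms are of order $\eps$ to obtain a martingale in the limit.

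First, I would introduce the perturbed test function $\varphi^\eps=\varphi+\eps\varphi_1+\eps^2\varphi_2$, with $\varphi_1,\varphi_2$ built from $\varphi$ by Proposition~\ref{prop:corrector1}. By Corollary~\ref{cor:firstsecondcorrector}, the process
\[
M^\eps_{\varphi^\eps}(t)=\varphi^\eps(f^\eps_t,\bar m^\eps_t)-\varphi^\eps(f^\eps_\mathrm{in},\bar m_0)-\int_0^t \LL^\eps\varphi^\eps(f^\eps_s,\bar m^\eps_s)ds
\]
is a martingale. Using the key identity \eqref{controlledLeps2}, $\LL^\eps\varphi^\eps=\LL\varphi(\rho^\eps_s)+\eps\LL_\flat\varphi_2$, so that, setting
\[
N^\eps(t):=\varphi(\rho^\eps_t)-\varphi(\rho^\eps_\mathrm{in})-\int_0^t\LL\varphi(\rho^\eps_s)ds,
\]
the difference $M^\eps_{\varphi^\eps}(t)-N^\eps(t)$ reduces to the corrector boundary terms $\eps\varphi_1+\eps^2\varphi_2$ evaluated at the endpoints, together with $\eps\int_0^t\LL_\flat\varphi_2\,ds$. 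By the uniform $L^1$-bound \eqref{L1OK} and the estimates \eqref{BB1}, \eqref{BB2}, \eqref{BB3},
\[
\sup_{t\in[0,T]}|M^\eps_{\varphi^\eps}(t)-N^\eps(t)|\leq C(\psi,\xi)\,\eps\quad\text{a.s.}
\]

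Second, I would transfer the ``approximate martingale'' property to the Skorohod probability space. Since $N^\eps(t)$ is a continuous deterministic functional of the path $\rho^\eps\in C([0,T];H^{-\sigma}(\T^d))$ and since $\rho^\eps$ and $\tilde\rho^\eps$ have the same law, the process $\tilde N^\eps$ defined with $\tilde\rho^\eps$ satisfies, for any $0\leq s_1\leq\dots\leq s_p\leq s\leq t$ and any bounded continuous $h\colon (H^{-\sigma}(\T^d))^p\to\R$,
\[
\bigl|\tilde\E[(\tilde N^\eps(t)-\tilde N^\eps(s))\,h(\tilde\rho^\eps_{s_1},\dots,\tilde\rho^\eps_{s_p})]\bigr|\leq 2C(\psi,\xi)\|h\|_\infty\,\eps,
\]
because the martingale property of $M^\eps_{\varphi^\eps}$ provides the same bound on the level of $\rho^\eps$ up to the $O(\eps)$ correction above.

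Third, I would pass to the limit $\eps\to 0$. The assumptions on $\xi$ and the smoothness of $\chi$ via \eqref{boundRchi} make both maps $\varphi\colon H^{-\sigma}(\T^d)\to\R$ and $\LL\varphi\colon H^{-\sigma}(\T^d)\to\R$ continuous (in $\LL\varphi$, the random fields $\chi(\bar m_0)$ and $R_0R_1\chi(\bar m_0)$ are bounded in $C^2(\T^d)$, so the bilinear/linear expressions in $\rho$ from \eqref{limGen5} extend continuously to $H^{-\sigma}(\T^d)$ thanks to multiplication of $H^{\sigma+1}$-regular test functions by smooth fields, and expectation is justified by dominated convergence since $\|\rho^\eps\|_{H^{-\sigma}}$ is uniformly bounded via \eqref{L2estimate}). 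Therefore, the almost-sure convergence $\tilde\rho^\eps\to\tilde\rho$ in $C([0,T];H^{-\sigma}(\T^d))$ implies $\tilde N^\eps(t)\to \tilde M_\varphi(t)$ pointwise in $(\omega,t)$, and the uniform bound coming from $\|\psi'\|_\infty$, $\|\xi\|_{H^{\sigma+2}}$, and the uniform $L^2$-bound \eqref{L2estimate} allows dominated convergence. This yields
\[
\tilde\E[(\tilde M_\varphi(t)-\tilde M_\varphi(s))\,h(\tilde\rho_{s_1},\dots,\tilde\rho_{s_p})]=0,
\]
which is the desired martingale property with respect to $(\tilde\F_t)$. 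Continuity of $\tilde M_\varphi$ is immediate from continuity of $\varphi$ on $H^{-\sigma}(\T^d)$ and continuity of $\tilde\rho$ in this space, together with absolute continuity of $t\mapsto\int_0^t\LL\varphi(\tilde\rho(s))ds$.

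The main obstacle is the third step: one must verify that $\LL\varphi$ admits a continuous extension from its natural definition on $L^2$-densities to paths taking values only in $H^{-\sigma}(\T^d)$, and that the convergence $\tilde\rho^\eps\to\tilde\rho$ really allows one to exchange the expectation in $\omega$ of the noise with the limit $\eps\to 0$. Both issues are handled by combining the uniform $L^\infty(0,T;L^2(\T^d))$ bound on $\tilde\rho^\eps$ (inherited from \eqref{L2estimate}) with the Sobolev embedding $H^{\sigma+2}(\T^d)\hookrightarrow C^2(\T^d)$ valid for $\sigma\in(0,1)$ in the dimensions under consideration, once $\sigma$ is large enough; the general case $\sigma\in(0,1)$ then follows from the continuous embedding $H^{-\sigma_1}\hookrightarrow H^{-\sigma_2}$ for $\sigma_1<\sigma_2$.
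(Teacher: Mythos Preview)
Your proposal is correct and follows essentially the same route as the paper's proof: introduce the second-order corrector $\varphi^\eps=\varphi+\eps\varphi_1+\eps^2\varphi_2$, use Corollary~\ref{cor:firstsecondcorrector} together with \eqref{controlledLeps2} and the bounds \eqref{BB1}--\eqref{BB3}, \eqref{L1OK} to see that $N^\eps$ (the paper's $X^\eps_\varphi$) is a martingale up to $O(\eps)$, transfer to the Skorohod space by equality of laws, and pass to the limit by continuity of $\LL\varphi$ on $H^{-\sigma}(\T^d)$ and dominated convergence. One small caution: in your final paragraph the embedding $H^{\sigma+2}(\T^d)\hookrightarrow C^2(\T^d)$ fails for $\sigma\in(0,1)$ when $d\geq 2$, but you do not actually need it---your earlier (correct) observation that multiplying $\nabla_x\xi\in H^{\sigma+1}$ or $D^2_x\xi\in H^{\sigma}$ by the $C^2$ fields $\chi(\bar m_0)$, $R_0R_1\chi(\bar m_0)$ stays in $H^\sigma$ already gives the continuity of $\LL\varphi$ on $H^{-\sigma}(\T^d)$, exactly as the paper argues.
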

\begin{proof}[Proof of Proposition~\ref{prop:MartingalePbtilde}] Let $0\leq s\leq t\leq T$. Let $0\leq t_1<\cdots<t_n\leq s$ and let $\Theta$ be a continuous and bounded function on $[H^{-\sigma}(\T^d)]^n$. Note that $\F_s$ is generated by the random variables $\Theta(\rho^0(t_1),\ldots,\rho^0(t_n))$, for $n\in\N^*$, $(t_i)_{1,n}$ and $\Theta$ as above. Our aim is therefore to prove that
\begin{equation}\label{Martingaletilde}
\E\left[(M_\varphi(t)-M_\varphi(s))\Theta(\rho^0(t_1),\ldots,\rho^0(t_n))\right]=0.
\end{equation}
Let $\varphi^\eps=\varphi+\eps\varphi_1+\eps^2\varphi_2$ be the second order correction of $\varphi$, with $\varphi_1$ and $\varphi_2$ given by Proposition~\ref{prop:first-corrector} and Proposition~\ref{prop:second-corrector} respectively. The results of the appendix~\ref{sec:app3} ensure that
\begin{equation}\label{Mvarphitildeeps}
M^\eps_\varphi(t):=\varphi^\eps(f^\eps(t),\bar{\mathtt{m}}^\eps_t)-\varphi^\eps(f_\mathrm{in},\bar{\mathtt{m}}^\eps_0)-\int_0^t\LL^\eps\varphi^\eps(f^\eps(s),\bar{\mathtt{m}}^\eps_s)ds
\end{equation}
is a martingale. The identity
\begin{equation}\label{Martingaletildeeps}
\E\left[(M^\eps_\varphi(t)-M^\eps_\varphi(s))\Theta(\rho^\eps(t_1),\ldots,\rho^\eps(t_n))\right]=0,
\end{equation}
is therefore satisfied for all $\eps>0$. By Proposition~\ref{prop:first-corrector} and Proposition~\ref{prop:second-corrector} and the fact that $\LL^\eps\varphi^\eps=\LL\varphi+\eps\LL^\flat\varphi_2$, we deduce from \eqref{Martingaletildeeps} that
\begin{equation}\label{MartingaleX}
\E\left[(X^\eps_\varphi(t)-X^\eps_\varphi(s))\Theta(\rho^\eps(t_1),\ldots,\rho^\eps(t_n))\right]=\mathcal{O}(\eps),
\end{equation}
where the process $(X^\eps_\varphi(t))$ is 
\[
X^\eps_\varphi(t)=\varphi(\rho^\eps(t))-\varphi(\rho_\mathrm{in})-\int_0^t\LL\varphi(\rho^\eps(s))ds.
\]
The bound \eqref{L2estimate}, which is satisfied almost surely, allows us to consider the functions of $\rho$ involved in the definition of $M_\varphi(t)$ as bounded continuous functions on $C([0,T];H^{-\sigma}(\T^d))$. We can then take the limit in \eqref{MartingaleX} to conclude to \eqref{Mvarphitilde}.
\end{proof}

We apply Proposition~\ref{prop:MartingalePbtilde} with, successively, $\varphi(\rho)=\dual{\rho}{\xi}$, $\varphi(\rho)=|\dual{\rho}{\xi}|^2$ and use \eqref{SecondQuad} to obtain the following result.

\begin{corollary} Let $K^*$ and $\Psi$ be defined by \eqref{Kflat} and \eqref{coeffPsi}. Then the process 
\[
M_t^0:=\rho^0_t-\rho^0_\mathrm{in}-\int_0^t \divv(K^*\nabla_x\rho^0_s+\Psi\rho^0_s) ds
\]
is a continuous $H^{-2}(\T^d)$-valued martingale with quadratic variation given by 
\[
[M_t^0,M_t^0]\cdot(\xi,\xi)=2 \int_0^t \|S^{1/2}(\rho^0_s\nabla_x\xi)\|^2_{L^2(\T^d;\R^d)} ds,
\]
for all $\xi\in C^1(\T^d)$.
\label{cor:MartingalePbtilde}\end{corollary}

The details of the proof can be found in Section~5.5.2 of \cite{DebusscheVovelle20}. We refer also to the same paragraph of \cite{DebusscheVovelle20} for the application of the L\'evy representation theorem, \cite[Theorem~8.2, p.222]{DaPratoZabczyk14}, that gives the existence of a stochastic basis $(\tilde{\Omega},\tilde{\F},\tilde{\PP},(\tilde{\F}_t)_{t\geq 0},(\tilde{\beta}_k)_{k\in\N})$ and of an adapted process $H^{-2}(\T^d)$-valued $\tilde{\rho}^0$ with the same law as $\rho^0$, such that, $\tilde{\PP}$-almost surely, for all $t\in[0,T]$, for all $\xi\in H^2(\T^d)$ the identity
\begin{equation}\label{soltilde}
\dual{\tilde{\rho}^0_t}{\xi}=\dual{\rho^0_\mathrm{in}}{\xi}+\int_0^t \dual{\tilde\rho^0_s}{\divv(K^*\nabla_x\xi)-\Psi\cdot\nabla_x\xi)} ds-\sqrt{2}\sum_{k\geq 0}\int_0^t\dual{\tilde\rho^0_s}{\mu_k^{1/2}p_k\cdot\nabla_x\xi} d\tilde{\beta}_k(s)
\end{equation}
is satisfied. 
\section{The limit equation}\label{sec:limeq}

The analysis of the limit generator $\LL$ was initiated in Section~\ref{sec:arrange}. We complete this study here, in particular we establish the relation between $\LL$ and the stochastic PDE \eqref{eq:limeq}, and use the good properties of the limit equation \eqref{eq:limeq} to conclude the proof of convergence of $f^\eps$.

\subsection{Resolution of the limit equation}\label{sec:limeqSolve}

Let $S^{1/2}$ and $W(t)$ be defined by \eqref{Shalf} and \eqref{defCylW} respectively. There are several possible approaches to the resolution of the limit equation \eqref{eq:limeq}, \textit{e.g.} \cite{DaPratoZabczyk14} based on semi-group approach, \cite{PrevotRockner2007} based on Gelfand triples and monotony. Here we follow\footnote{the only drawback of this approach is that we possibly require an excessive regularity of the initial datum $\rho_\mathrm{in}$ and of the pilot process $(\bar{\mathtt{m}}_t)$} the recent reference \cite{DuLiu2019}, for the reason that it is immediate to cast \eqref{eq:limeq} into the framework of \cite{DuLiu2019} and that we can then directly apply the results of \cite{DuMeng2010} on backward parabolic SPDEs to obtain a satisfactory uniqueness result (see Section~\ref{sec:CL}). To proceed, we observe first that \eqref{eq:limeq} can be written under the following form (where we use summation over repeated indexes)
\begin{equation}\label{limeqDuLiu}
d\rho^0=(a^{ij}\partial^2_{x_i x_j}\rho^0+b^i \partial_{x_i} \rho^0+c\rho^0)dt+(\sigma^{ik}\partial_{x_i}\rho^0+\nu^k\rho^0)d\beta^k(t),
\end{equation}
with
\begin{equation}\label{CoeffDuLiu}
a^{ij}=K^*_{ij},\; b^i=\partial_{x_i}(K^*_{ij})+\Psi_i,\; c=\partial_{x_i}(\Psi_i),\; \sigma^{ik}=\sqrt{2}\mu_k^{1/2}p_{k,i},\; \nu^k=\sqrt{2}\partial_{x_i}(\mu_k^{1/2}p_{k,i})
\end{equation} (we refer to Section~\ref{sec:arrange} for the definition of $\mu_k$ and $p_k$). Let us set $q_k=\mu_k^{1/2}p_k$. We have the following elementary result.

\begin{lemma}\label{lemma:l2coeff} Let $s\in\N$. The sequence $q_0,q_1,\dotsc$ satisfies the bound
\begin{equation}\label{eq:l2coeff}
\sum_{k\in\N}\|q_k\|^2_{H^s(\T^d;\R^d)}\leq \E\left[\|U_0(\bar{\mathtt{m}}_0)\|_{H^{2s}(\T^d;\R^d)}^2
+\|\bar{\mathtt{m}}_0\|_{H^{2s}(\T^d;\R^d)}^2\right].
\end{equation}
\end{lemma}

By \eqref{BallR} and \eqref{mixCoupled} and the Sobolev injection $H^s(\T^d;\R^d)\hookrightarrow C^{2+\delta}(\T^d;\R^d)$ for $s>2+d/2$, where $\delta$ is a certain positive number, we deduce that $\sigma^i\in\ell^2(\N;C^{1+\delta}(\T^d))$, $\nu\in\ell^2(\N;C^{1+\delta}(\T^d))$. The proof of Lemma~\ref{lemma:l2coeff} is similar to the proof of Proposition~5.14 in \cite{DebusscheVovelle20}, so we skip the details. By \eqref{BallR} and \eqref{mixCoupled}, we also have that the $C^{\delta}(\T^d)$ norm of the coefficients $a^{ij}$, $b^i$, $c$ is finite. Condition \textbf{(H)} in \cite[p.2647]{DuLiu2019} is fulfilled therefore. We also need to check that \eqref{limeqDuLiu} is parabolic in the following sense, \textit{cf.} Condition~(1.2) in \cite{DuLiu2019}: there exists $\lambda_0>0$ such that 
\begin{equation}\label{DuLiu12}
\lambda_0\mathrm{I}_d+\sigma\sigma^*\leq 2a\leq\lambda_0^{-1}\mathrm{I}_d.
\end{equation}
Clearly, only the first inequality in \eqref{DuLiu12} is at stake here. By \eqref{SKernelK} and \eqref{diagC}, we have
\[
(\sigma\sigma^*)(x)=2\E\left[(U_0\chi)(\bar{\mathtt{m}}_0)(x)\otimes\chi(\bar{m_0})(x)\right].
\]
From the definition \eqref{Kflat} of $K^*$ and the resolvent formula $U_0U_1=U_0-U_1$, we deduce that
\begin{equation}\label{2asigma}
2a(x)-(\sigma\sigma^*)(x)=2K(M)+2\E\left[(U_1\chi)(\bar{\mathtt{m}}_0)(x)\otimes\chi(\bar{m_0})(x)\right].
\end{equation}
By \eqref{diagC}, the matrix $\mathtt{C}(x,x)$ is non-negative. If we replace $U_0$ by $U_1$ in the definition of $\mathtt{C}$ in \eqref{SKernelK}, we obtain a new operator $S$ with the same properties as the former operator. In particular, the last term in \eqref{2asigma} is a non-negative matrix. The matrix $K(M)$ is strictly positive. Indeed, if $\xi\in\R^d$, then $K(M)\xi\cdot\xi$ is the integral against $\nu$ of $v\mapsto|v\cdot\xi|^2$. If this quantity vanishes, then $v\cdot\xi=0$ for $\nu$-a.e. $v\in V$ since $M\geq\alpha$ by \eqref{HypM}. By \eqref{nuND}, $\xi=0$. This gives \eqref{DuLiu12} for a certain positive $\lambda_0$. We can now apply \cite[Corollary~1.2]{DuLiu2019}, that asserts that \eqref{eq:limeq} with initial datum $\rho_\mathrm{in}$ has a unique classical solution $\rho^0$, in the sense that the predictable process $\rho^0$ has the properties $\rho^0_t\colon\T^d\to L^p(\Omega)$ for all $t$, where $p$ is a given exponent greater than $2$, $\rho(\cdot,t,,\omega)$ is of class $C^2$ for all $(t,\omega)$, while \eqref{limeqDuLiu} is satisfied at all points $x\in\T^d$, in the usual integrated form of SDE.

\subsection{Conclusion}\label{sec:CL}

We consider the solution $\tilde{\rho}^0$ to \eqref{eq:limeq} obtained in \eqref{soltilde}. For simplicity, we may remove the tilde from notations now. Let $\rho^*$ be the solution to \eqref{eq:limeq} given in the previous Section~\ref{sec:limeqSolve}. We want to show that $\rho^0=\rho^*$. This requires an adequate estimate on the difference $\rho^0-\rho^*$. By linearity of \eqref{eq:limeq}, the problem is to prove that, although $\rho^0$ has only a very limited regularity a priori, it is indeed the trivial solution when $\rho_\mathrm{in}=0$. A possible approach, to establish this, is to prove an energy estimate for such weak solutions. This follows directly from an It\^o formula, but the weak regularity of $\rho^0$ is precisely an obstacle here. One may try to regularize the equation, or apply \eqref{soltilde} with $\xi$ a member of the spectral basis of an adequate operator. However, there are two elliptic operators at skate here, $a:D^2_x$ and $\frac12\sigma\sigma^*:D^2_x$ (\textit{cf.} \eqref{CoeffDuLiu}). Even if the competition is favourable by \eqref{DuLiu12}, we do not know how to get the desired energy estimate. We will use instead duality and the results of \cite{DuMeng2010} on backward parabolic SPDEs.
The main idea, symbolically, is simple: if $(X_t)$ is a solution of the SDE $dX_t=AX_tdt+BX_t dW_t$ and $(Y_t,q_t)$ is a solution of the backward SDE $dY_t=-A^*Y_tdt-B^*q_t dt +q_t dW_t$, then $t\mapsto \E\dual{X_t}{Y_t}$ is constant by the It\^o formula. Here we use the fact that the backward SPDE
\begin{equation}\label{BSPDE}
d\psi_t=\left[-\partial^2_{x_i x_j}(a^{ij}\psi_t)+\partial_{x_i}(b^i \psi_t)-c\psi_t+\partial_{x_i}(\sigma^{ik}q^k_t)-\nu^k q^k_t\right]dt+ q^k_td\beta^k(t),
\end{equation}
with terminal condition $\psi_T=h$, where $h$ is some given $\F_T$-measurable function in $C^\infty(\T^d)$, admits some solutions of class $C^2$ to justify by regularization the It\^o formula that leads to the identity $\E\dual{\rho^0_T}{h}=\E\dual{\rho^0_\mathrm{in}}{\psi_0}=0$ and to the desired conclusion $\rho^0_T\equiv 0$. The details of this regularization procedure can be found in \cite[Section~5.5.3]{DebusscheVovelle20}.

\section{Appendix: Markov processes}\label{sec:appendix}

\subsection{Markov processes}\label{sec:app1}

If $E$ is a Polish space, we denote by ${\mathrm{BM}(E)}$ the Banach space of bounded Borel-measurable functions on $E$ with the sup-norm
$$
\|\varphi\|_{{\mathrm{BM}(E)}}=\sup_{x\in E}|\varphi(x)|.
$$
The set ${\mathrm{BC}(E)}$ is the subspace of continuous bounded functions. By Markov process, we mean the triplet constituted of a Markov semi-group, some probability kernels, and the associated Markov processes. More precisely, we suppose first that we are given a Markov semi-group $\mathbf{P}=(P_t)_{t\geq 0}$, which is defined a priori as a an endomorphism of the space $\mathrm{BM}(E)$ and satisfies the initial condition $P_0=\mathrm{Id}$, the semi-group property $P_t\circ P_s=P_{t+s}$ for $t,s\geq 0$, the preservation of positivity $P_t\varphi\geq 0$ when $\varphi\geq 0$, and fixes the constant function $\mathbf{1}$ equal to $1$ everywhere: $P_t\mathbf{1}=\mathbf{1}$ for all $t\geq 0$. The second element of our set of data is a probability kernel $Q(t,x,B)$: for all $\varphi\in\mathrm{BM}(E)$, for all $x\in E$,
\begin{equation}\label{probaKernel}
P_t\varphi(x)=\int_E \varphi(y) Q(t,x,dy),
\end{equation} 
where, for every $t\geq 0$, for every $x\in E$, $Q(t,x,\cdot)$ is a probability measure and the dependence in $x$ is measurable, in the sense that the right-hand side of \eqref{probaKernel} is a measurable function of $x$. 
The third and last element of our set of data is the set $\mathbf{X}=\left\{(X^x_t)_{t\geq 0};x\in E\right\}$ of Markov processes indexed by their starting points $x$: $X^x_0=x$ almost surely. The finite-dimensional distribution of $(X^x_t)_{t\geq 0}$ is given by
\begin{multline*}
\PP(X^x_{0}\in B_0,X^x_{t_1}\in B_1,\dotsc,X^x_{t_k}\in B_k)=\int_{B_0}\dotsb\int_{B_{k-1}}Q(t_{k}-t_{k-1},y_{k-1},B_k)\\
\times Q(t_{k-1}-t_{k-2},y_{k-2},dy_{k-1})\dotsb Q(t_1,y_0,dy_1)\delta_x(dy_0),
\end{multline*}
where $0\leq t_1\leq\dotsb\leq t_k$ and $B_0,\dotsc,B_k\in\G$ and the Markov property
\begin{equation}\label{appMarkovPty}
\E\left[\varphi(X^x_{t+s})|\F^{X}_t\right]=P_s\varphi(X^x_t)
\end{equation}
is satisfied for all $s,t\geq 0$, $\varphi\in\mathrm{BM}(E)$, where $(\F^X_t)=(\sigma(X^x(s)_{0\leq s\leq t}))$ is the filtration generated by $X^x$. There is a certain redundancy in the description above since the existence of probability kernels satisfying \eqref{probaKernel} can be deduced from the properties of $(P_t)_{t\geq 0}$, \cite[Proposition~1.2.3]{BakryGentilLedoux14}, while the construction of a Markov process with the required finite dimensional distribution is established in \cite[Theorem~1.1 p.157]{EthierKurtz86} for example. It is not limiting, however, to assume that all these elements are given altogether, all the more since the processes $(X^x_t)_{t\geq 0}$ will generally have additional pathwise properties, being typically continuous or c{\`a}dl{\`a}g. They may also satisfy the Markov property \eqref{appMarkovPty} with respect to a given filtration $(\F_t)$ larger than $(\F^X_t)$.

\subsection{Infinitesimal generator}\label{sec:app2}

Given a Markov process as in Section~\ref{sec:app1}, we would like to define the associated infinitesimal generator. There are various possible approaches. In \cite{BakryGentilLedoux14} for example, it is assumed that the process admits an invariant measure $\mu$. The semi-group can then be extended as a contraction semi-group on $L^2(\mu)$. By assuming additionally that this extension gives rise to a strongly continuous semi-group, \cite[Property (vi), p.11]{BakryGentilLedoux14}, one can use the standard theory of strongly continuous semi-group, \cite{Pazy1983}, to define the infinitesimal generator. We may follow this approach, since the Markov processes that we are considering have all invariant measures. It would require some additional material, for instance the proofs that the Markov process $(f_t,\mathtt{m}(t;n))_{t\geq 0}$ of Theorem~\ref{th:MarkovPty} and that the limiting Markov process $(\rho^0(t))_{t\geq 0}$ of Theorem~\ref{th:mainth} have invariant measures. These results have a definite interest, but would divert us from our aim, the proof of Theorem~\ref{th:mainth}. To reach this aim more directly, we define the infinitesimal generator by convergence of the time increments under $\pi$-convergence. We say that a sequence $(\varphi_n)$ of $\mathrm{BM}(E)$ is $\pi$-converging to $\varphi\in\mathrm{BM}(E)$ (denoted $\varphi_n\topi\varphi$) if $\sup_n\|\varphi_n\|_{\mathrm{BM}(E)}<+\infty$ and $\varphi_n(x)\to\varphi(x)$ for all $x\in E$. This denomination is introduced in \cite{Priola99} and coincides with the terminology of bounded pointwise convergence (b.c.p.) used in \cite[p.111]{EthierKurtz86}. A function $\varphi\in\mathrm{BM}(E)$ is in the domain $D(\LL)$ of the infinitesimal generator $\LL$ of $(P_t)$ if there exists $\psi\in\mathrm{BM}(E)$ such that 
\begin{equation}\label{psiDL}
\frac{P_t\varphi-\varphi}{t}\topi\psi,
\end{equation}
when $t\to 0$. We then set $\LL\varphi=\psi$. Note that, on the elements $\varphi\in D(\LL)$, the property of continuity
\begin{equation}\label{pisemigroup}
P_t\varphi\topi\varphi
\end{equation}
when $t\to 0$ is satisfied. By the semi-group property, \eqref{pisemigroup} implies the property of continuity from the right $P_t\varphi\topi P_{t_*}\varphi$ when $t\downarrow t_*$, for every $t_*\geq 0$.

\subsection{Martingale property of Markov processes}\label{sec:app3}

Consider a Markov process as in Section~\ref{sec:app1}, which is Markov with respect to a filtration $(\F_t)$, and has a generator $\LL$, as defined as in Section~\ref{sec:app2}. We make the following hypotheses:
\begin{enumerate}
\item\label{item-stocont}\emph{stochastic continuity:} we have $P_t\varphi\topi P_{t_*}\varphi$ when $t\to t_*$ for every $\varphi\in\mathrm{BC}(E)$ and every $t_*\geq 0$,
\item\label{item-meas}\emph{measurability:} for all $x\in E$, the application $(\omega,t)\mapsto X^x_t(\omega)$ is measurable $\Omega\times\R_+\to E$.
\end{enumerate}
Then, for all $\varphi$ in the domain of $\LL$, for all $x\in E$,
\begin{equation}\label{XMartingaleE}
M^x_t:=\varphi(X^x_t)-\varphi(x)-\int_0^t\LL\varphi(X^x_s)ds
\end{equation}
is a $(\F_t)$-martingale. If furthermore $|\varphi|^2$ is in the domain of $\LL$, then the process $(Z^x_t)$ defined by 
\begin{equation}\label{VarXMartingaleE}
Z^x_t:=|M^x_t|^2-\int_0^t (\LL|\varphi|^2-2\varphi\LL\varphi)(X^x_s)ds,
\end{equation}
is a martingale.

\begin{remark} The proof of these results can be found in the appendix to \cite{DebusscheVovelle20}.
\label{rk:stocont0}\end{remark} 

\begin{remark}Since $\|P_t\varphi\|_{\mathrm{BM}(E)}\leq\|\varphi\|_{\mathrm{BM}(E)}$, the convergence $P_t\varphi\topi P_{t_*}\varphi$ is equivalent to the convergence $P_t\varphi(x)\to P_{t_*}\varphi(x)$ for all $x\in E$. The hypothesis of stochastic continuity can therefore be rephrased as the continuity, for the topology of the weak convergence of probability measures, of $t\mapsto P_t^*\delta_x$, where $P_t^*\delta_x$ is the law of $X^x_t$. This continuity property is satisfied if $(X^x_t)$ is stochastically continuous in particular: for all $\delta>0$,
\begin{equation}\label{eq:stocont}
\lim_{t\to t_*}\PP(d_E(X^x_t,X^x_{t_*})>\delta)=0,
\end{equation}
where $d_E$ is the distance on $E$. Under \eqref{eq:stocont}, we can also assume, up to a modification of the process, that the measurability property of Item~\ref{item-meas} is satisfied, \cite[Proposition~3.2]{DaPratoZabczyk14}.
\label{rk:stocont}\end{remark} 

\begin{remark} Assume that $E$ is a Banach space and that the process $(\mathbf{X}_t)$ is locally bounded in the following sense: for all $r_0>0$, for all $T>0$, there exists $r_T>0$ such that: $\PP$-almost surely, for all $x$ in the ball $\bar{B}(0,r_0)$ of $E$, for all $t\in[0,T]$, $X_t^x\in\bar{B}(0,r_T)$. Then all the above assertions can be extended to the cases where the functions $\varphi$ are measurable (respectively, continuous; respectively, Lipschitz continuous) and bounded on bounded sets. Indeed, as long as the set of initial values $x$ is restricted to a bounded set $\bar{B}(0,r_0)$, and time is restricted to a finite interval $[0,T]$, we have $P_t\varphi(x)=P_t(\varphi\circ \theta_{r_T}) (x)$, where, given $r>0$, $\varphi\circ\theta_{r_T}$ is the measurable (respectively, continuous; respectively, Lipschitz continuous) bounded function, obtained by composition of $\varphi$ with the truncation operator $\theta_r(x)$ equal to $x$ if $\|x\|_E<r$, and to $rx/\|x\|_E$ otherwise.
\label{rk:item-locbound}\end{remark} 

\subsection{Change of coordinates}\label{sec:app4}

Let $H\colon E\to E^\prime$ be a continuous, bijective map from $E$ onto an other Polish space $E^\prime$. We consider the process $Y_t=H(X_t)$. This \emph{change of coordinates}~\cite[Section~1.15.1]{BakryGentilLedoux14} provides a new Markov process $Y=(Y_t)$ with probability kernels given by
\[
P^Y(t,y,B)=H_*P^X(t,y,B):=P^X(t,H^{-1}(y),H^{-1}(B)),
\]
for all $t\geq 0$, $y\in E^\prime$, $B$ measurable in $E^\prime$. If $X$ has the invariant measure $\lambda^X$, then $Y$ has the invariant measure $\lambda^Y$ given by $\lambda^Y(B)=H_*\lambda^X(B):=\lambda^X(H^{-1}(B))$. The state space of $Y$ is $E^\prime$. By composition from the right with $H$ or its inverse, we have some correspondences between the spaces $\mathrm{BM}(E)$ and $\mathrm{BM}(E^\prime)$, and $\mathrm{BC}(E)$ and $\mathrm{BC}(E^\prime)$. We have also
\[
P^Y_t\varphi=(P^X_t\varphi\circ H)\circ H^{-1},\quad \LL^Y\varphi=(\LL^X\varphi\circ H)\circ H^{-1},
\]
with, here again, an isomorphism between $D(\LL^Y)$ and $D(\LL^X)$ given by the composition from the right with $H$.

\subsection{Poisson's Equation}\label{sec:app5}

In this part, $E$ is a Banach space. We consider a Markov process $(\mathbf{X}_t)$ that is locally bounded (as described in Remark~\ref{rk:item-locbound} above) uniformly in time: for all $r_0>0$, there exists $r_1>0$ such that: $\PP$-almost surely, for all $x$ in the ball $\bar{B}(0,r_0)$ of $E$, for all $t\geq 0$, $X_t^x\in\bar{B}(0,r_1)$. We assume that the process has an invariant measure $\mu$ which has a bounded support and that it satisfies the following mixing hypothesis: there exists a continuous function $\Gamma_\mathrm{mix}\in L^1(\R_+)$ such that: for all $x,y\in \bar{B}(0,r_0)$, there is a constant $C(r_1)$ depending on $r_1$, and there is a coupling $(\hat{X}^x_t,\hat{X}^y_t)_{t\geq 0}$ of $(X^x_t,X^y_t)_{t\geq 0}$ such that 
\begin{equation}\label{coupling-appendix}
\E\left[d_E(\hat{X}^x_t,\hat{X}^y_t)\right]\leq C(r_1)\Gamma_\mathrm{mix}(t).
\end{equation}
We also assume that the property of stochastic continuity described in Item~\ref{item-stocont} of Section~\ref{sec:app3} is satisfied. We have then the following proposition.

\begin{proposition}[Poisson's equation] Let $\Phi\colon E\to\R$ be Lipschitz continuous on bounded sets. The Poisson equation $\LL\Psi=\Phi$ has a a solution $\Psi$ in the class of measurable functions bounded on bounded sets if, and only if, $\dual{\Phi}{\mu}=0$. In the latter case, a solution is given by the resolvent formula 
\begin{equation}\label{invPoisson}
\Psi(x)=-\int_0^\infty P_t\Phi(x)dt,
\end{equation}
and this solution is unique up to addition of a constant.
 \label{prop:Poisson}\end{proposition}

\begin{proof}[Proof of Proposition~\ref{prop:Poisson}] Let $\Phi$ be Lipschitz continuous on bounded sets. Let us assume that $\dual{\Phi}{\mu}=0$. We will show that \eqref{invPoisson} provides a solution of the Poisson equation. For $x\in E$, we fix $r_0>0$ large enough to contain both $x$ and the support of $\mu$. Since $\mu$ is invariant, we have 
\[
\dual{\Phi}{\mu}=\dual{P_t\Phi}{\mu}=\int_E \E\Phi(X^y_t)d\mu(y).
\]
It follows that
\begin{equation}\label{invPoisson1}
P_t\Phi(x)=P_t\Phi(x)-\dual{\Phi}{\mu}=\int_{\bar{B}(0,r_0)}\E\left[\Phi(X^x_t)-\Phi(X^y_t)\right] d\mu(y).
\end{equation}
We denote by $\Lip(\Phi,r_1)$ the Lipschitz constant of $\Phi$ on the ball $\bar{B}(0,r_1)$. We deduce from \eqref{invPoisson1} that
\[
|P_t\Phi(x)|\leq \Lip(\Phi,r_1)\int_{\bar{B}(0,r_0)}\E\left[d_E(X^x_t,X^y_t)\right] d\mu(y)=\Lip(\Phi,r_1)\int_{\bar{B}(0,r_0)}\E\left[d_E(\hat{X}^x_t,\hat{X}^y_t)\right] d\mu(y).
\]
Using the bound \eqref{coupling-appendix}, we obtain, up to a constant depending on $\Phi$ and $r_1$, a bound from above on $|P_t\Phi(x)|$ by the integrable function $\Gamma_\mathrm{mix}(t)$:
\begin{equation}\label{invPoisson2}
|P_t\Phi(x)|\leq C^\prime(r_1)\Gamma_\mathrm{mix}(t).
\end{equation} 
This shows that \eqref{invPoisson} defines a measurable function bounded on bounded sets. Using \eqref{invPoisson2}, we can also approximate $\Psi$ by the integral in \eqref{invPoisson} restricted to a finite time interval to justify the formula
\begin{equation}\label{invPoisson3}
\delta_t\Psi(x)=\frac{P_t\Psi(x)-\Psi(x)}{t}=\int_0^1 P_{st}\Phi(x)ds.
\end{equation}
From \eqref{invPoisson3} and the property of stochastic continuity, it is immediate to conclude that $\Psi\in D(\LL)$ and $\LL\Psi=\Phi$. Reciprocally, assume that the equation $\LL\Psi=\Phi$ has a solution in the class of measurable functions bounded on bounded sets. By the definition of $\LL$, we have
\begin{equation}\label{intLL}
P_t\Psi(x)=\Psi(x)+\int_0^t P_s\LL\Psi(x)ds,
\end{equation}
for all $t\geq 0$ and all $x\in E$. Using $\LL\Psi=\Phi$, integrating each member of the identity \eqref{intLL} against the invariant measure $\mu$, we obtain $t\dual{\Phi}{\mu}=0$, which establishes the necessary condition $\dual{\Phi}{\mu}=0$. At last, let us prove the uniqueness up to constant. First, we note that \eqref{invPoisson2} gives $|P_t\Phi(x)-\dual{\Phi}{\mu}|\leq C^\prime(r_1)\Gamma_\mathrm{mix}(t)$ and thus $P_t\Phi(x)\to\dual{\Phi}{\mu}$ for all $x$. If $\LL\Psi=0$, we can pass to the limit $t\to+\infty$ in \eqref{intLL} to obtain that $\Psi$ is a constant function.
\end{proof}

\section*{Acknowledgement}
This work was performed within the framework of the LABEX MILYON (ANR-10- LABX-0070) of Universit\'e de Lyon, within the program ``Investissements d'Avenir'' (ANR-11-IDEX-0007) operated by the French National Research Agency (ANR). The partial support of ANR-12-BS01-0019 - STAB - \emph{Stabilit\'e du comportement asymptotique d'EDP, de processus stochastiques et de leurs discr\'etisations} is also acknowledged.

\bibliographystyle{abbrv}
\bibliography{../../../../../nosbiblab-utf8}

\end{document}